\newcommand{\nw}{\newcommand}
\nw{\A}{\mathcal{A}} \nw{\B}{\mathcal{B}} \nw{\D}{\mathcal{D}}
\nw{\E}{\mathcal{E}} \nw{\F}{\mathcal{F}} \nw{\K}{\mathcal{K}}
\nw{\M}{\mathcal{M}} \nw{\T}{\mathcal{T}} \nw{\W}{\mathcal{W}}
\nw{\X}{\mathcal{X}} \nw{\Y}{\mathcal{Y}}
\nw{\C}{\mathbb{C}} \nw{\N}{\mathbb{N}} \nw{\Q}{\mathbb{Q}}
\nw{\R}{\mathbb{R}} \nw{\Z}{\mathbb{Z}}
\nw{\cH}{\mathcal{H}} \nw{\cI}{\mathcal{I}} \nw{\cL}{\mathcal{L}}
\nw{\cN}{\mathcal{N}} \nw{\cP}{\mathcal{P}} \nw{\cQ}{\mathcal{Q}}
\nw{\cR}{\mathcal{R}} \nw{\cS}{\mathcal{S}} \nw{\cZ}{\mathcal{Z}}
\nw{\sB}{\mathscr{B}} \nw{\sC}{\mathscr{C}} \nw{\sD}{\mathscr{D}}
\nw{\sE}{\mathscr{E}} \nw{\sH}{\mathscr{H}} \nw{\sM}{\mathscr{M}}
\nw{\sR}{\mathscr{R}} \nw{\sS}{\mathscr{S}} \nw{\sX}{\mathscr{X}}
\nw{\fh}{\mathfrak{h}} \nw{\fm}{\mathfrak{m}} \nw{\fp}{\mathfrak{p}}
\nw{\fB}{\mathfrak{B}} \nw{\fC}{\mathfrak{C}} \nw{\fP}{\mathfrak{P}}
\nw{\vv}{\mathrm{v}}
\nw{\al}{\alpha} \nw{\be}{\beta} \nw{\ga}{\gamma}
\nw{\de}{\delta} \nw{\e}{\varepsilon}  \nw{\z}{\zeta}
\nw{\y}{\eta} \nw{\te}{\theta} \nw{\io}{\iota} 
\nw{\ka}{\kappa} \nw{\la}{\lambda} \nw{\x}{\xi}
\nw{\ro}{\rho} \nw{\s}{\sigma} \nw{\ta}{\tau} 
\nw{\fy}{\varphi} \nw{\om}{\omega}
\nw{\Ga}{\Gamma} \nw{\De}{\Delta} \nw{\La}{\Lambda}
\nw{\Om}{\Omega}
\nw{\p}{\partial}
\nw{\na}{\nabla}
\nw{\Cu}{\bigcup}
\nw{\Ca}{\bigcap}
\nw{\re}{\mathop{\mathrm{Re}}}
\nw{\im}{\mathop{\mathrm{Im}}}
\nw{\supp}{\operatorname{supp}}
\nw{\sign}{\operatorname{sign}}
\nw{\lec}{\lesssim}
\nw{\gec}{\gtrsim}
\nw{\etc}{,\ldots,}
\nw{\I}{\infty}
\nw{\da}{\dagger}
\nw{\empt}{\varnothing}
\nw{\ti}{\tilde}
\nw{\ck}{\check}
\nw{\ba}{\overline}
\nw{\ha}{\widehat}
\nw{\U}{\underline}
\nw{\BR}[1]{\left[#1\right]}
\nw{\LR}[1]{{\langle #1 \rangle}}
\nw{\tf}{\tfrac}
\nw{\IN}[1]{\text{ in }#1}
\nw{\ndc}{{\scaleto{\nearrow}{4pt}}}
\nw{\nin}{{\scaleto{\searrow}{4pt}}}
\nw{\emb}{\hookrightarrow}
\nw{\Div}{\operatorname{div}}
\nw{\trace}{\operatorname{tr}}
\nw{\tran}{\intercal}
\nw{\CAS}[1]{\begin{cases} #1 \end{cases}}
\nw{\mat}[1]{\begin{pmatrix} #1 \end{pmatrix}}
\nw{\EQ}[1]{\begin{equation}\begin{split} #1 \end{split}\end{equation}}
\nw{\pt}{&}
\nw{\pr}{\\ &}
\nw{\pq}{\quad}
\nw{\pQ}{\qquad}
\nw{\pn}{}
\nw{\prq}{\\ &\quad}
\nw{\prQ}{\\ &\qquad}
\nw{\tagtext}[3]{\begin{description}  \item[\hspace*{-20pt} (#1) #2] \label{#1} \ \\ #3  \end{description}}
\nw{\hyptag}[1]{\hyperref[#1]{\rm (#1)}}
\numberwithin{equation}{section}
\newtheorem{thm}{Theorem}[section]
\newtheorem{cor}[thm]{Corollary}
\newtheorem{lem}[thm]{Lemma}
\newtheorem{prop}[thm]{Proposition}
\theoremstyle{remark}
\newtheorem{rem}{Remark}[section]
\newtheorem{defn}{Definition}[section]
\title[Global wellposedness for distributions on the Fourier half space]{Global wellposedness of \\ general nonlinear evolution equations \\ for distributions on the Fourier half space}
\author{Kenji Nakanishi}
\author{Baoxiang Wang}
\address{Research Institute for Mathematical Sciences, Kyoto University, Kyoto 606-8502, Japan}
\email{kenjinakanishi@gmail.com}
\address{School of Sciences, Jimei University, Xiamen, 361021 and  School of Mathematical Sciences, Peking University, Beijing 100871, China}
\email{wbx@math.pku.edu.cn}
\thanks{The first author (K.~N.) was supported by JSPS KAKENHI Grant Numbers JP20H01814 and JP22H01132, the second author (B.~W.) was supported in part by NSFC Grant Number 12171007.} 
\subjclass[2020]{35A01, 35A02, 35B60, 35F55, 35G55, 35K55, 35L60, 35L70, 35Q30, 35Q31, 35Q55, 46F05, 46F10}
\keywords{Nonlinear evolution equations, Global wellposedness, Distributions}
\date{}
\begin{document}

\maketitle
\begin{abstract}
The Cauchy problem is studied for very general systems of evolution equations, 
where the time derivative of solution is written by Fourier multipliers in space and analytic nonlinearity, with no other structural requirement. 
We construct a function space for the Fourier transform embedded in the space of distributions, and establish the global wellposedness with no size restriction. 
The major restriction on the initial data is that the Fourier transform is supported on the half space, decaying at the boundary in the sense of measure. 
We also require uniform integrability for the orthogonal directions in the distribution sense, but no other condition. 
In particular, the initial data may be much more rough than the tempered distributions, and may grow polynomially at the spatial infinity. 
A simpler argument is also presented for the solutions locally integrable in the frequency. 
When the Fourier support is slightly more restricted to a conical region, the generality of equations is extremely wide, including those that are even locally illposed in the standard function spaces, such as the backward heat equations, 
as well as those with infinite derivatives and beyond the natural boundary of the analytic nonlinearity. 
As more classical examples, our results may be applied to the incompressible and compressible Navier-Stokes and Euler equations, the nonlinear diffusion and wave equations, and so on.  
The major drawback of the Fourier support restriction is that the solutions cannot be real valued. 
\end{abstract}

\tableofcontents

\section{Introduction}
We consider general systems of nonlinear evolution equations in the form
\begin{align}
\partial_t u   = L(D)u + N(D) \sH(M(D)u), \ \ u(0,x) =u_0(x),   \label{GEE}
\end{align}
for the unknown function $u(t,x):\R^{1+d}\to\C^{n_1}$, where $L(D)=\F^{-1}\hat L(\x)\F$, $M(D)$ and $N(D)$ are given matrix-valued Fourier multipliers, defined with the Fourier transform $\F$ on $\R^d$ and the symbols $\hat{L}(\x)$, $\hat{M}(\x)$, $\hat{N}(\x)$ of matrix size $n_1\times n_1$, $n_2\times n_1$, $n_1\times n_3$ respectively, 
for some $d,n_1,n_2,n_3\in\N$. $\sH$ is a given analytic function defined on some neighborhood of $0\in\C^{n_2}$ with values in $\C^{n_3}$, satisfying $\sH(z)=o(z)$ as $z\to 0$. 

Each component of $L,M,N$ with smooth symbol may be regarded as a linear and continuous map on $\F^{-1}\sD'(\R^d)$, namely the Fourier (inverse) image of the space of Schwartz distributions $\sD'(\R^d)$, defined as the dual space of $\F\sD(\R^d)\subset\sS(\R^d)$ (for the precise definition, see \cite{Eh} where it is denoted by $\mathbf{D}'$, \cite[Chapter II, Section 1.4]{GeSh} where it is denoted by $Z'$, or the end of this section).

The goal of this paper is to establish global wellposedness of the Cauchy problem \eqref{GEE} for such a wide class of equations in a certain function space embedded in $\F^{-1}\sD'(\R^d)$, with no size restriction, but restricting the Fourier support to the half space $[0,\I)\times\R^{d-1}$. Such a kind of Fourier support conditions have definite meanings for the solutions of certain physical models. For instance, the solutions of the Constantin--Lax--Majda equation can be reduced to those of the complex heat equation which are supported in the Fourier half space, cf. \cite{ChWaWa2022}.     
The form of equations \eqref{GEE} covers major nonlinear evolution equations, such as the Euler equations and the Navier-Stokes equations, both in the incompressible case and of the compressible models, and many other evolutional PDEs of nonlinear hyperbolic, parabolic, and dispersive types, though the complex analyticity of the nonlinear part $\sH$ may well require some extension or reformulation of the equations. 

The advantage of the Fourier support restriction is to make sense of convolution between two distributions in the Fourier space, corresponding to the multiplication in the physical space.  
Without that restriction, it would be highly non-trivial even to make sense of the equations in such  generality, and of course, one should not expect the global wellposedness in any way compatible with the standard theories. 
Nevertheless, there have been many attempts for general existence of weak solutions to nonlinear PDEs. In particular, there is an extensive literature of studies based on algebras of generalized functions, those introduced by Colombeau \cite{Co} and those by Rosinger \cite{Ro}. 
Without trying to be exhaustive, we just mention two results for arbitrary types of equations: the global Cauchy-Kovalevskaya theory for analytic equations and data \cite[Chapter 2]{Ro}, and local existence for general equations and data \cite{CoHeOb}. 
Their settings are more general than ours, as the equations may contain variable coefficients and the initial data may contain arbitrary frequencies, but they also require considerable departure from the linear distribution theory, including the meanings of the generalized functions, the derivatives and the equations.

This paper is a generalization of the previous ones by the second author and others, where unique global existence of solutions was proven for the Navier-Stokes equations \cite{FeGrLiWa2021}, the complex-valued heat equations \cite{ChWaWa2022}, the nonlinear Schr\"odinger equations \cite{ChLuWa} and the nonlinear Klein-Gordon equation \cite{Wa2023}, in very rough function spaces defined by negative exponential weight in the Fourier space, into which all the standard Sobolev spaces are continuously embedded. 
Roughly speaking, besides the time decaying behavior and the lower regularity in Sobolev spaces,  the results in \cite{FeGrLiWa2021,ChWaWa2022,ChLuWa,Wa2023} may be summarized as follows: If the initial data belongs to the negative exponential Sobolev (or modulation) space and the Fourier transform is supported in the first octant $[0,\I)^d$, then those equations have unique global solutions in similar function spaces with less regularity. 

The generalization in this paper is threefold. First, the equations are very general and not specialized, with no structural condition than those mentioned above (except some regularity and boundedness of the symbols $\hat L,\hat M,\hat N$). 
Second, the function space for the solutions is much wider. 
In the previous papers, the Fourier transform was locally $L^2$ and growing at most exponentially. 
In this paper, it may be a distribution of arbitrary growth. 
In particular, the solutions in this paper may decay, grow, or be periodic in space, while the previous ones decay in average sense. 
Another benefit of the larger space is the wellposedness: solutions stay in the same space as the initial data, and are continuous for all time. 
That was not the case in the previous papers, due to the regularity loss in time.  
Third, the Fourier support is on the bigger set, namely the half space instead of the octant. 
We will also consider subsets of cone shape with possibly growing aperture, for which the conditions on $\hat L,\hat M,\hat N$ become much less restrictive, and the octant is a special case. 

Since the space $\X$ is rather complicated, we will also consider smaller function spaces where the Fourier transform is locally integrable, with a much simpler proof with more explicit information on the solutions.

\subsection{Global wellposedness results} \label{ss:GWPX}
Now let us describe the main results in this paper about the global wellposedness for \eqref{GEE}. 
For any $\vv\in\R^d\setminus\{0\}$ and $a,b\in\R$, we denote
\EQ{ \label{def half}
 \R^d_{<a}(\vv):=\{\x\in\R^d \mid \vv\cdot\x<a\}, \pq \R^d_{(a,b)}(\vv):=\{\x\in\R^d \mid a<\vv\cdot\x<b\},}
and similarly define $\R^d_{\le a}(\vv)$, $\R^d_{\ge a}(\vv)$, etc. 
One may fix $\vv=(1,0\etc 0)$ without losing generality.  
The function space $\X(\vv)\subset\sD'(\R^d)$ is defined, to be used for the Fourier transform of solutions, as the set of distributions 
$F\in\sD'(\R^d)$ with the following two properties \hyptag{X-i}--\hyptag{X-ii}:

\tagtext{X-i}{Decay at the boundary as measure}
{There exist $a\in(0,\I)$ and a complex Borel measure $\fm$ on $\R^d$, such that $F$ and $\fm$ coincide when restricted to $\R^d_{<a}(\vv)$, and the total variation $|\fm|$ satisfies $|\fm|(\R^d_{\le 0}(\vv))=0$. (It implies $\supp F\subset \R^d_{\ge 0}$.)}
 
More concretely, there exist $a\in(0,\I)$ and finite Borel measures $\fm_k$ on $\R^d$ satisfying $\fm_k(\R^d_{\le 0}(\vv))= 0$ for $k=0,1,2,3$, such that for all $\fy\in\sD(\R^d)$ with $\supp\fy\subset\R^d_{<a}(\vv)$, 
\EQ{
 F(\fy) = \sum_{k=0}^3 \int_{\R^d} i^k\fy(x)d\fm_k(x).}

\tagtext{X-ii}{Integrability for the orthogonal directions as distribution} 
{For each $a\in(0,\I)$, there exists $m\in\N$ such that $F|_{\R^d_{<a}(\vv)} \in W^{-m,1}(\R^d_{<a}(\vv))$, namely the $L^1$-based Sobolev space of the negative regularity.} 

More concretely, for each $a\in(0,\I)$, there exist finite subsets $A\subset\N_0^d$ and 
$\{f_\al\}_{\al\in A}\subset L^1(\R^d)$ such that for all $\fy\in\sD(\R^d)$ with $\supp\fy\subset\R^d_{<a}(\vv)$, 
\EQ{ \label{F iii}
 F(\fy) = \sum_{\al\in A}\int_{\R^d} \p_x^\al\fy(x) f_\al(x)dx.}

The space $\X(\vv)$ is equipped with a locally convex Hausdorff topology such that $\X(\vv)\subset\sD'(\R^d)$ is continuous. See Section \ref{s:X} for the definition and the properties of $\X(\vv)$ (and Lemma \ref{lem:defX} for consistency with the above description). 
Examples of $u_0\in\F^{-1}\X(\vv)$ may be found in \eqref{u0 Hs many periods}.  

The condition \hyptag{X-i} implies that $u_0\in\F^{-1}\X(\vv)\subset\F^{-1}\sD'(\R^d)$ can not be real-valued unless it is $0$, because $u_0$ is real-valued if and only if $\F u_0(-\x)=\ba{\F u_0(\x)}$. 
So we can not deal with any non-trivial real-valued solutions under that condition. 
This is the major and huge disadvantage for applications, and it is the same feature as in the previous papers \cite{FeGrLiWa2021,ChWaWa2022,ChLuWa,Wa2023}. 
All the surprising generality rests on this price, which also precludes potential contradictions with the standard results.

For the symbols of the Fourier multipliers in \eqref{GEE}, we assume

\tagtext{C-s}{Coefficients of semi-linear type}
{$\hat L,\hat M,\hat N$ are smooth on $\R^d_{>0}(\vv)$ such that
$\p_\x^\al e^{t\hat L(\x)}(t^\te \hat N(\x))^j$ and $\p_\x^\al \hat M(\x)$ 
are bounded for $j\in\{0,1\}$, $0<t<1$ and $\x\in\R^d_{(1/n,n)}(\vv)$, 
for some $\te\in[0,1)$, every $n\in\N$ and every $\al\in\N_0^d$. 
Without the derivative ($\al=0$), they are required to be bounded also on $\R^d_{(0,1)}(\vv)$.} 

Note that the symbols are allowed to be discontinuous at the boundary $\vv\cdot\x=0$. 
\begin{thm}
Let $d\in\N$, $\vv\in\R^d\setminus\{0\}$ and assume \hyptag{C-s}. Then the Cauchy problem of \eqref{GEE} is globally wellposed in $(\F^{-1}\X(\vv))^{n_1}$ on $t\ge 0$. 
\end{thm}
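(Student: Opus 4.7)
The plan is to reformulate \eqref{GEE} on the Fourier side via Duhamel and to solve it by a contraction argument in seminorms attached to the strips $\R^d_{<a}(\vv)$. Setting $v=\F u$, the mild equation reads
\EQ{
 v(t) = e^{t\hat L}\F u_0 + \int_0^t e^{(t-s)\hat L}\hat N\, \sH_*(\hat M v(s))\,ds,
}
where $\sH_*$ is the convolution analogue of $\sH$, obtained from its Taylor expansion at $0\in\C^{n_2}$ by replacing each product by convolution. The decisive structural fact, already used in \cite{FeGrLiWa2021,ChWaWa2022,ChLuWa,Wa2023}, is that for distributions supported in $\R^d_{\ge 0}(\vv)$ the convolution restricted to $\R^d_{<a}(\vv)$ depends only on the restrictions of the factors to $\R^d_{<a}(\vv)$. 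Thus the whole fixed point can be carried out strip by strip.

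\textbf{Strip seminorms and boundary smallness.} For each $a\in(0,\I)$ I would equip $\X(\vv)|_{\R^d_{<a}(\vv)}$ with a seminorm $\|\cdot\|_a$ defined as the infimum, over admissible decompositions of the type given in \hyptag{X-i} and \hyptag{X-ii}, of a combination of $|\fm|(\R^d_{<a}(\vv))$ with a weighted sum $\sum_{\al\in A}\|f_\al\|_{L^1(\R^d_{<a}(\vv))}$ carrying a penalty in $|\al|$. The key soft property to prove is $\|F\|_a\to 0$ as $a\to 0^+$ for every $F\in\X(\vv)$: the measure part vanishes by $\s$-additivity and $|\fm|(\R^d_{\le 0}(\vv))=0$, while the distributional remainder can be made arbitrarily small on a thin strip by a cut-off argument. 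This boundary smallness is what makes the nonlinearity small on thin strips regardless of the size of the initial data.

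\textbf{Key estimates and the short-strip fixed point.} The analytic heart is a convolution bound $\|F*G\|_a\lec C_a\|F\|_a\|G\|_a$ compatible with the decomposition of $\X(\vv)$: boundary measures convolve to a measure, Young's inequality handles the $W^{-m,1}$ parts, and derivatives are absorbed onto one factor. Combined with the multiplier bounds from \hyptag{C-s}, which give boundedness of $\hat M$, $e^{t\hat L}$, and $t^\te\hat N$ on the strip seminorms uniformly on $t\in(0,1]$, and with the integrability of $t^{-\te}$ for $\te<1$, the Duhamel map becomes a contraction on a small ball around the linear flow in $C([0,T];\X(\vv)|_{\R^d_{<a}(\vv)})$ for every $T\ge 0$, provided $a$ is small enough. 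The required smallness is of $\|\F u_0\|_a$ rather than of $T$, so the solution exists for all $t\ge 0$ in the thin strip.

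\textbf{Extension to the full half space and main obstacle.} Once the solution is known on $\R^d_{<a}(\vv)$ for all time, I would extend it to $\R^d_{<a+\de}(\vv)$ by writing the new unknown as the known part plus a correction supported in $\R^d_{(a,a+\de)}(\vv)$: every factor of $v$ in $\sH_*(\hat M v)$ that reaches into the layer has one copy restricted to a layer of thickness $O(\de)$, so the layer-correction map is a contraction for small $\de$. Countably many such extensions cover $(0,\I)$, and uniqueness glues the pieces into a global solution in $C([0,\I);\F^{-1}\X(\vv))$; continuous dependence on the initial data follows from the uniformity of the estimates. The main obstacle is the convolution estimate: the two qualitatively different components of $\X(\vv)$, the boundary measure and the negative-order $L^1$-Sobolev distribution, must be preserved under convolution with controlled $a$-dependence, and the Taylor series defining $\sH_*$ must converge absolutely in the strip seminorms. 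Carefully tracking the regularity index $m$ and the growth of the constants $C_a$ as $a$ increases, especially across the discontinuity of the symbols at $\vv\cdot\x=0$, will be the delicate part.
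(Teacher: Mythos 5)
Your strip-by-strip architecture differs from the paper's single global-in-time contraction, and the critical gap is in your first step. You claim the Duhamel map is a contraction on a small ball in $C([0,T];\X(\vv)|_{\R^d_{<a}(\vv)})$ ``for every $T\ge 0$'' with the required smallness coming from $\|\F u_0\|_a$ alone, not from $T$. This is not substantiated and actually fails without a time-dependent weight: already for $\dot u = u^2$ with $\hat u_0 = c\,1_{(0,1)}(\x)$ the exact solution has $\hat u(t,\x)=c\,e^{ct\x}$ on $\x\in(0,1)$, so on any fixed strip $(0,a)$ the norm grows exponentially in $t$ and a time-independent ball cannot contain the orbit. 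The Duhamel integral $\int_0^t$ accumulates a factor of order $T$ that smallness of $\|\F u_0\|_a$ cannot cancel, since \hyptag{C-s} only bounds $e^{t\hat L}$ for $0<t<1$ and gives no large-time decay; your $t^{-\te}$-integrability controls the singularity at $s=t$, not the accumulation over $[0,T]$. A contraction without time weights forces $a\downarrow 0$ as $T\uparrow\I$, which your layer extensions cannot compensate.

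What is missing, and what the paper's proof supplies, is the time-dependent weight $\nu(t,l)$: the specific choice $\nu_*(t,l):=\e^{-1}\nu_0(l)e^{t(1+\ti L(l))}$ (with the crucial extra $e^{t}$ beyond $e^{t\ti L(l)}$) makes the Duhamel integrand pick up the integrable factor $(t-s)^{-\te}e^{s-t}$, and the superlinearity property (6) of $\fC_\de^2$ in Lemma \ref{lem:gen weight} ensures the ratio $\ti\nu_1(s,l)/\ti\nu_1(t,l)\le1$ for $s\le t$. It is this weight construction, not merely the boundary decay \hyptag{X-i}, that turns the small-factor gain in the convolution estimate (Lemma \ref{lem:conv}) into a global-in-time contraction, and it has no counterpart in your sketch. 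Separately, global wellposedness in $\X(\vv)$ is stronger than existence and uniqueness in a single $\X^{\mu,\nu}$: because the solution loses regularity relative to the data inside the inductive limit, continuity of the data-to-solution map needs the compactness characterization of Proposition \ref{prop:cmp X} and the derivative-decomposed spaces $\X^{\mu,\nu,\io}_\p$; appealing to ``uniformity of the estimates'' does not give this.
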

See Corollary \ref{GWP-XR} and Theorem \ref{thm:gex unif} for the more precise versions. 

The above result is general but also restrictive about the equation when $d\ge 2$, since the symbols are required to be uniformly bounded in the orthogonal directions. 
Roughly speaking, the nonlinear part of the equation is required to have less derivatives in the orthogonal directions  than the linear smoothing effect of $L$. 
We may remove this limitation by imposing a stronger condition on the Fourier support than the half space. For any closed subset $\cR\subset\R^d$, let 
\EQ{
 \X_\cR(\vv):=\{F\in\X(\vv) \mid \supp F\subset\cR\}. }
Since $\X(\vv)\subset\sD'(\R^d)$ is continuous, it is a closed subspace of $\X(\vv)$. 
In this case, we may deal with much more general symbols, namely

\medskip

\tagtext{C-0}{Coefficients bounded at $0$}
{$\hat L,\hat M,\hat N$ are smooth on a neighborhood of $\cR\cap\R^d_{>0}(\vv)$ and bounded as $\x\to 0$.} 

\begin{thm} \label{thm:gex supp cond}
Let $d\in\N$, $\vv\in\R^d\setminus\{0\}$, and $\cR\subset\R^d_{\ge 0}(\vv)$ be a closed subset such that $\cR+\cR\subset\cR$ and that $\cR\cap\R^d_{<a}(\vv)$ is bounded for each $a\in(0,\I)$. Assume \hyptag{C-0}. Then the Cauchy problem of \eqref{GEE} is globally wellposed in $(\F^{-1}\X_\cR(\vv))^{n_1}$ on $t\in\R$. 
\end{thm}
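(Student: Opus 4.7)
The plan is to pass to Fourier variables and solve the Duhamel integral equation
\[
 \hat u(t,\x)=e^{t\hat L(\x)}\hat u_0(\x)+\int_0^t e^{(t-s)\hat L(\x)}\hat N(\x)\sum_{k\ge 2} c_k(\hat M\hat u(s))^{*k}(\x)\,ds
\]
(with $c_k$ the Taylor coefficients of $\sH$ about $0$ and $*$ convolution on $\R^d$) by Picard iteration in an appropriate weighted subspace of $\X_\cR(\vv)$. Two structural features drive the argument. First, the semigroup property $\cR+\cR\subset\cR$ forces convolutions of distributions supported in $\cR$ to remain supported in $\cR$, so the Duhamel iterates stay in $\X_\cR(\vv)$. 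Second, if $\x=\x_1+\cdots+\x_k\in\cR_a:=\cR\cap\R^d_{<a}(\vv)$ with each $\x_j\in\cR$, then each $\x_j\in\cR_a$ (since $\vv\cdot\x_j\ge 0$ and the $\vv\cdot\x_j$ sum to less than $a$); hence the value of $(\hat u)^{*k}$ on $\cR_a$ depends only on $\hat u|_{\cR_a}$, and the total-variation estimate $|\fm_1*\fm_2|(\cR_a)\le|\fm_1|(\cR_a)\,|\fm_2|(\cR_a)$ holds. Combined with boundedness (hence compactness) of every $\cR_a$, this reduces the nonlinear analysis on each slab to estimates on a compact set.

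For the contraction I would work with a scale of weighted norms of the form
\[
 \|\hat u\|_\rho=\int_\cR e^{-\rho\vv\cdot\x}d|\fm|(\x)+(\text{weighted \hyptag{X-ii} part}),
\]
which turn convolution into an algebra product $\|\hat u*\hat v\|_\rho\le\|\hat u\|_\rho\|\hat v\|_\rho$, so that the analytic series $\sum c_k(\hat M\hat u)^{*k}$ converges in $\|\cdot\|_\rho$ for $\hat u$ in a small ball. Given any $\hat u_0\in\X_\cR(\vv)$, the local finiteness of $\hat u_0$ on each compact $\cR_a$ allows us to choose $\rho$ large enough that $\|\hat u_0\|_\rho$ is finite and sits inside the analyticity domain of $\sH$. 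The smoothness supplied by \hyptag{C-0}, together with compactness of $\cR_a$, ensures that $\hat L,\hat M,\hat N$ act as bounded multipliers in $\|\cdot\|_\rho$ and that $e^{t\hat L}$ is bounded on $\cR_a$ uniformly for $t$ in any bounded interval, crucially in \emph{both} time directions, since only boundedness and not smoothing is required. A standard Banach fixed point then yields a unique solution on some $[-T,T]$, forward and backward.

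Global extension to $t\in\R$ follows by reiterating from $\hat u(\pm T)$, possibly increasing the weight parameter to absorb growth; the argument never breaks down, because the symbols are bounded on each compact $\cR_a$ uniformly in $t$, leaving no blow-up mechanism on any fixed slab. Solutions for different $\rho$ and different slabs are consistent by the slab-locality observed above, assembling into a well-defined element of $\F^{-1}\X_\cR(\vv)$ on all of $\R$, and uniqueness is inherited from local uniqueness after restriction to slabs. The main obstacle is designing a locally convex topology on $\X_\cR(\vv)$ that simultaneously accommodates the \hyptag{X-i} measure part, the \hyptag{X-ii} distributional Sobolev part, and an exponential weight compatible with convolution; once this is in place, the compactness of each slab $\cR_a$—which is absent in the half-space setting of the preceding theorem—removes the need for uniform estimates in the orthogonal directions, and this is precisely why the weaker condition \hyptag{C-0} is enough here.
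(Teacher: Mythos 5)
Your key observation---that slab-boundedness of $\cR$ together with \hyptag{C-0} gives boundedness of the symbols on each compact slab $\cR\cap\R^d_{(1/n,n)}(\vv)$---is exactly the paper's argument: the proof of this theorem is a short reduction to Corollary~\ref{GWP-XR}, noting that under the slab hypothesis \hyptag{C-0} implies \hyptag{C-s} with $\te=0$ for both time directions. So you identified correctly why the weaker coefficient condition suffices. But the from-scratch construction you sketch, rather than reducing to the earlier machinery, has two gaps that would make it fail.

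First, a single exponential weight $e^{-\ro\vv\cdot\x}$ cannot contain $\X_\cR(\vv)$: elements of $\X$ are only required to be measures near $\{\vv\cdot\x=0\}$, and away from it they may be distributions of arbitrarily high order growing super-exponentially in $\vv\cdot\x$. Already for $d=1$ and $\cR=[0,\I)$ (which satisfies the hypotheses), $F(\x_1)=e^{e^{\x_1}}1_{\x_1>0}\in\X_\cR$ yet $\int e^{-\ro\x_1}F\,d\x_1=\I$ for every $\ro$; so increasing $\ro$ does not in general make $\|\hat u_0\|_\ro$ finite, let alone small, and the ``weighted \hyptag{X-ii} part'' is left unspecified in a way that would repair this. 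The paper handles it with the doubly indexed scale $\X^{\mu,\nu}$, $\mu,\nu\in C^\ndc_0$ ranging over arbitrary growth. Second, and more seriously, $e^{-\ro\vv\cdot\x}=1$ at the boundary, so the algebra inequality $\|\hat u*\hat v\|_\ro\le\|\hat u\|_\ro\|\hat v\|_\ro$ supplies no smallness; the Picard map then contracts only for small data, and ``reiterate from $\hat u(\pm T)$, increasing $\ro$'' is precisely the naive continuation that fails for $\dot u=u^2$: the required $\ro$ diverges in finite time, and compactness of each slab only bounds the symbols there, not the data. The smallness that makes the theorem true without size restriction comes from the boundary decay $\nu(0)=0$ exploited in Lemma~\ref{lem:gen weight} to extract an arbitrary factor $\ka$ from each convolution, together with the time-dependent weight of Theorem~\ref{thm:gex unif} whose superlinear property (6) of $\fC_\de^2$ absorbs the accumulated loss uniformly in $t$. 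None of that is recoverable from an exponential weight alone.
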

It is proven after Corollary \ref{GWP-XR} as an immediate consequence. 
The simplest choice of $\cR$ is the cone $\{|\x|\le \la(\vv\cdot\x)\}$ with a constant $\la>0$, which approaches $\R^d_{\ge 0}(\vv)$ as $\la\to\I$. 
The left side $|\x|$ may be replaced with any norm on $\R^d$. 
Another example is the octant $[0,\I)^d$ with $\vv\in(0,\I)^d$. 
More generally, $\cR$ may grow arbitrarily fast as $\vv\cdot\x\to\I$: For any $C^1$ function $R:[0,\I)\to[0,\I)$ with $R(0)=0$, there exists $\cR\supset\{|\x|\le R(\vv\cdot\x)\}$ with the require properties (cf.~Lemma \ref{lem:super-additive}). 

In the above results, the symbols may grow arbitrarily fast as $|\x|\to\I$ on $\cR$ (which is possible only as $\vv\cdot\x\to\I$), meaning that the equation may contain arbitrary order of derivatives, including infinity. 
Hence the global wellposedness in $\F^{-1}\X_\cR(\vv)$  holds even for extremely bad PDEs such as 
\EQ{
 u_t = -\De u + u^3, \pq 
 u_t = e^{e^{-\De}}[e^{(e^{-\De}u)^2}-1],}
where the sign in front of $\De$ is the opposite one from the usual settings. 
Such equations are not even locally wellposed in the standard function spaces, and the ODE versions are not globally wellposed either. 
See Section \ref{ss:beyond nb} for another example, where the solutions may live outside of the natural boundary of the analytic nonlinearity $\sH$. 

The above generality indicates how strong is the assumption of the Fourier support, which essentially eliminates all possible nonlinear feedback. 
In fact, the core of the proof is to construct weight functions in the Fourier space, depending on time, which control the nonlinear flow of frequencies to $\vv\cdot\x\to\I$, so that the standard contraction argument can work on the subspaces forming $\X$ characterized by the weights. 
The main novelty compared with the previous work \cite{FeGrLiWa2021,ChWaWa2022,ChLuWa,Wa2023} is that the procedure works without using any explicit form of the equations or the weights. 
This flexibility is crucial to get the wellposedness in the entire function space $\F^{-1}\X$. 

\subsection{Solutions locally integrable in frequency}
Because of the vast generality of the equation and the initial data, it is not easy to extract concrete information on the behavior of solutions and the weights from the proof in $\X$. 
Restricting the initial data to the Fourier transform of locally integrable functions, we may simplify the construction of solutions considerably, with more explicit information on the behavior. 
This version is still more general than the previous work on the equations and the initial data. 

In this case, we first fix a closed $\cR\subset\R^d_{\ge 0}(\vv)$ satisfying $\cR+\cR\subset\cR$, together with a continuous and super-additive function $\fp:\cR\to[0,\I)$ (namely $\fp(\x)+\fp(\y)\le\fp(\x+\y)$). Then for the Fourier multipliers in \eqref{GEE}, we assume

\tagtext{C-m}{Measurable coefficients}
{$\hat L,\hat M,\hat N$ are measurable on $\cR$ satisfying
\EQ{ \label{bd LNMp}
 \s_{\max}(\hat L+\hat L^*) \le 2\te\fp, \pq \max_{j,k}|\hat N_{j,k}|+|\hat M_{l,m}| \lec 1+\fp,}
for all $\x\in\cR$ and some constant $\te\in(0,1)$, where $\hat L^*$ is the adjoint of $\hat L$ and 
$\s_{\max}$ is the maximal eigenvalue of the self-adjoint matrix.} 

If $\fp:\cR\to[0,\I)$ satisfies \eqref{bd LNMp} with locally bounded $|\x|^{-1}\fp(\x)$, then there exists $\ti\fp:\cR\to[0,\I)$ that is continuous and super-additive satisfying $\ti\fp\ge\fp$, by Lemma \ref{lem:super-additive}. Then we may replace $\fp$ with $\ti\fp$. 

Next, for any $s<0$, we consider the weighted $L^1$ space
\EQ{
 \pt Z^s_0:=\{\fy\in L^1_{loc}(\R^d) \mid \supp\fy\subset\cR,\ \|\fy\|_{Z^s_0}:= \|(\fp^{-1}+\fp)e^{s\fp}\fy\|_{L^1(\cR)}<\I\}.}
More generally, we may choose any Banach space $Z^s\subset L^1_{loc}(\R^d)$ satisfying 
\EQ{ \label{cond Zs}
 \supp\fy\subset\cR, \pq \sup_{c\in\cR}\|\fy(\x-c)\|_{Z^s}+\|\fy\|_{Z^s_0} \lec \|\fy\|_{Z^s}=\||\fy|\|_{Z^s}}
for all $\fy\in Z^s$. $Z^s_0$ is the maximal choice for $Z^s$. Another example is the anisotropic space for $\x:=(\bar\x,\ti\x)\in\R^{d_0}\times\R^{d-d_0}$ with 
\EQ{
 \pt \fp(\x):=\la(\vv\cdot\x+(\vv\cdot\x)^m), \pq \cR:=\{\x\in\R^d \mid |\bar\x| \le \la\fp(\x)\},
 \pr Z^s:=\{\fy\in Z^s_0 \mid \|\fy\|_{Z^s}:=\|(\fp^{-\s}+1)\fy\|_{L^r_{\bar\x}L^1_{\ti\x}}<\I\},}
for some $d_0\in\{1\etc d\}$, $\la>0$, $m\ge 1$, $r\in[1,\I]$ and $\s>1-d_0/r'$.

\begin{thm}
Let $d\in\N$, $\vv\in\R^d\setminus\{0\}$, $\cR\subset\R^d_{\ge 0}(\vv)$ be closed satisfying $\cR+\cR\subset\cR$, and let $\fp:\cR\to[0,\I)$ be continuous and super-additive. 
Assume \hyptag{C-m}. Let $s<0$ and let $Z^s\subset L^1_{loc}(\R^d)$ be a Banach space satisfying \eqref{cond Zs}. 
Then for any $B\in(0,\I)$, there exists $b\in[1,\I)$ such that 
for any $u_0\in (\F^{-1}L^1_{loc}(\R^d))^{n_1}$ satisfying $\supp\F u_0\subset\cR$ and $\|e^{s\fp^2}\F u_0\|_{Z^s}\le B$, 
the Cauchy problem of \eqref{GEE} has a unique global solution $u$ on $t\ge 0$ satisfying 
$e^{b(s\fp^2-t\fp)}\F u(t) \in Z^s(L^\I_t(0,\I))^{n_1}$. 
\end{thm}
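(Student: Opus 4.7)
The plan is to solve the Duhamel form of \eqref{GEE} by Banach contraction in a weighted space tailored to the weight
\EQ{
w(t,\x):=e^{b(s\fp(\x)^2 - t\fp(\x))},
}
with a parameter $b\ge 1$ to be chosen large (depending on $B,s,\te$, and $\sH$). Applying $\F$ to \eqref{GEE} yields
\EQ{
\hat u(t) = e^{t\hat L}\hat u_0 + \int_0^t e^{(t-\tau)\hat L}\hat N\,\F[\sH(M(D)u)](\tau)\,d\tau,
}
and setting $U(t,\x):=w(t,\x)\hat u(t,\x)$ I would seek a fixed point in the ball $\{U:\supp U(t,\cdot)\subset\cR,\ \|U\|_{Z^s(L^\I_t(0,\I))}\le 2B\}$. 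The support invariance $\supp\hat u(t,\cdot)\subset\cR$ for all $t\ge 0$ follows from $\cR+\cR\subset\cR$ and the fact that Fourier multipliers do not enlarge supports.

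For the linear part, the first inequality in \hyptag{C-m} is equivalent to the numerical-abscissa bound $\|e^{t\hat L(\x)}\|_{\mathrm{op}}\le e^{t\te\fp(\x)}$, which yields the pointwise estimate
\EQ{
|w(t,\x)\,e^{t\hat L(\x)}\hat u_0(\x)|\le e^{bs\fp(\x)^2-(b-\te)t\fp(\x)}|\hat u_0(\x)|\le e^{bs\fp(\x)^2}|\hat u_0(\x)|
}
for $b\ge\te$. Since $b\ge 1$ and $s<0$, the right-hand side is $\le e^{s\fp^2}|\hat u_0|$, whose $Z^s$-norm is at most $B$, so the linear contribution to $\|U\|_{Z^s(L^\I_t)}$ is $\le B$.

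The nonlinear part uses the analytic expansion $\sH(z)=\sum_{k\ge 2}\sH_k(z)$ into homogeneous pieces of degree $k$, so $\F[\sH_k(M(D)u)]$ is a $k$-fold convolution of $\hat M\hat u$. The crucial pointwise inequality, valid for $\x=\sum_{i=1}^k\x_i$ on $\cR$ by super-additivity of $\fp$ together with $s<0$ and $t\ge 0$, is
\EQ{
w(\tau,\x)\le \prod_{i=1}^k w(\tau,\x_i)\cdot\exp\biggl(2bs\sum_{i<j}\fp(\x_i)\fp(\x_j)\biggr),
}
whose negative second exponent is the decisive \emph{gain}. Combining this with $|\hat M|,|\hat N|\lec 1+\fp$, the semigroup loss $\|e^{(t-\tau)\hat L}\|\le e^{(t-\tau)\te\fp}$, and the time ratio $w(t)/w(\tau)=e^{-b(t-\tau)\fp}$, each $k$-linear Duhamel integrand is pointwise dominated by a $k$-fold convolution of $|U|$-type factors times a polynomial prefactor $(1+\fp)^{k+1}$; the latter is absorbed into the Gaussian core $e^{bs\fp^2}$ of $w$ and the mixed gain, provided $b$ is large. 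The translation-invariance in \eqref{cond Zs} converts each convolution into the bound $\|f\ast g\|_{Z^s}\lec\|f\|_{Z^s}\|g\|_{L^1}$, while the embedding $Z^s\hookrightarrow Z^s_0$ with weight $(\fp^{-1}+\fp)e^{s\fp}$ supplies the required $L^1$-control; performing the $\tau$-integration then yields an estimate of the $k$-linear piece by $C_k\|U\|_{Z^s(L^\I_t)}^k$.

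For $b=b(B,s,\te,\sH,Z^s)$ chosen large enough, the Gaussian absorption makes $\sum_{k\ge 2}C_k(2B)^k\le B$, so the Duhamel map sends the $2B$-ball into itself; an analogous difference estimate, coming from the analyticity of $\sH$, gives a contraction, and Banach's fixed-point theorem produces the unique global solution with the claimed weighted bound. The principal obstacle is the \emph{quantitative} absorption in the nonlinear step: bounding the polynomial prefactor $(1+\fp)^{k+1}$ via the Gaussian core together with the mixed gain \emph{uniformly in $k$}, so that the analytic series in $\sH$ converges for arbitrary $B>0$. A careful frequency splitting based on the relative magnitudes of the $\fp(\x_i)$ is likely needed to keep the cross-term gain $e^{2bs\fp(\x_i)\fp(\x_j)}$ active even when most of the $\x_i$ have small $\fp$.
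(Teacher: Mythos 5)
Your overall framework matches the paper's proof of Theorem \ref{thm:global L1}: the weight $e^{b(s\fp^2-t\fp)}$, the fixed-point iteration in a $2B$-ball of $Z^s(L^\I_t(0,\I))^{n_1}$, the use of $\|e^{t\hat L}\|\le e^{t\te\fp}$ for the linear part, the cross-term gain
\EQ{
\fp(\x)^2-\sum_i\fp(\x_i)^2\ge 2\sum_{i<j}\fp(\x_i)\fp(\x_j)
}
from super-additivity, and the reduction of $k$-fold convolutions to $\|f*g\|_{Z^s}\lec\|f\|_{Z^s}\|g\|_{L^1}$ via translation invariance and the $Z^s_0$ embedding. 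But the step you mark as ``the principal obstacle'' is precisely where your argument stops, and it is the technical core of the proof, not a routine verification.

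Two mechanisms are missing. First, the cross-term gain alone cannot give a bound uniform in $k$: when all but one of the $\fp(\x_i)$ are near zero, $\sum_{i<j}\fp(\x_i)\fp(\x_j)$ is negligible, and in that regime the exponential supplies nothing. The paper handles this by singling out $\fp(\x^0):=\max_j\fp(\x^j)$ and splitting the convolution domain into three regions according to whether $\fp(\x^0)$ and $\fp(\x-\x^0)$ exceed a threshold $\te$. In the low region the exponential gain is discarded entirely, and instead $\fp(\x)\ge k[\fp(\x^1)\cdots\fp(\x^k)]^{1/k}$ is used to spread the factor $\fp(\x)^{-1}$, produced by the time-integrated Duhamel kernel, into factors $\fp(\x^j)^{-1/k}$, which the singular weight $\fp^{-1/k_0}e^{s\fp}$ in $Z^s_0$ absorbs with a per-factor constant of order $\te^{1/k_0-1/k}$. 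Second, you miss the case split between the lowest-order term and the rest. For $k\ge k_0+1$ the exponent $1/k_0-1/k$ is positive and $\te$ can be taken small to make the per-factor constant $\le\e$; but for $k=k_0$ (here $k_0=1$) the exponent vanishes, $\te$ must be taken $1$, and the resulting constant $(2e^{-s})^{k_0}$ is \emph{not} small. For that one term the smallness is supplied solely by the factor $(c_0 b)^{-1}$ from $\int_0^t e^{(\ta-t)c_0b\fp}\fp\,d\ta=\tf{1-e^{-tc_0b\fp}}{c_0b}$, which is the reason $b$ must depend on $B$. Without separating these mechanisms, your claim that ``the Gaussian absorption makes $\sum_{k\ge 2}C_k(2B)^k\le B$'' is unsupported: the constants $C_k$ your sketch actually produces do not decay geometrically in $k$ uniformly over the low-$\fp$ regime.
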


See Theorem \ref{thm:global L1} for the more precise version, including the case where nonlinearity $\sH$ is decaying at $0$ in higher order. 
Note that the above is not a wellposedness result, because the solution is in a much rougher space than the initial data, due to the loss of regularity. 
In the previous case of $\X$, the wellposedness holds thanks to its topological structure, namely the inductive limit, but huge loss of regularity is going on inside $\X$. 

\subsection{Outline of the paper}
In Section \ref{s:X}, we give the precise definition and some basic properties of the space $\X$. 
It is a locally convex topological vector space, defined as the inductive limit of an uncountable directed system of Banach spaces in $\sD'(\R^d)$. 
We give a local basis at $0\in\X$, as well as characterization of bounded subsets and compact subsets of $\X$. The latter is crucial for the proof of wellposedness in $\F^{-1}\X$. 

In Section \ref{s:conv}, we investigate the convolution in those Banach spaces forming $\X$. 
Lemma \ref{lem:gen weight} is crucial for the contraction argument to solve the Cauchy problem, where the point is that arbitrarily small factors may be gained from the convolution on the half space with the boundary decay, if the weights are appropriately chosen. 

In Section \ref{s:GWP}, we prove the global wellposedness in $\F^{-1}\X$, including the cases of stringer support conditions. Solutions periodic in space are also considered as the special case where the support is a lattice. 

In Section \ref{s:comp}, we compare those solutions in $\F^{-1}\X$ with others, observing for the simplest  ODE and for the Burgers equation that they go beyond the classical blow-up. 
The example shows also that the global wellposedness can not hold within the tempered distributions. 
Another example shows that the global solutions exist even if the initial data are completely outside of the natural boundary of the nonlinearity $\sH$. 

In Section \ref{s:opt}, we investigate optimality of the space $\X$ or the conditions \hyptag{X-i}--\hyptag{X-ii}, as well as the stronger support condition, for the very general wellposedness to hold, using again the simple ODE. 

In Section \ref{s:L1}, we consider the solutions locally integrable in the Fourier space, and give a much simpler argument for the unique global existence of solutions. 

Finally in Section \ref{s:Ex}, our results are applied to several examples of equations.

\subsection{Notations}
The sets of non-negative integers are denoted by $\N_0=\{0,1,2\etc\}$ and $\N=\{1,2\etc\}$. 
$1_A$ denotes the characteristic function of any set $A$. 

In any seminormed space $V$, open and closed balls are denoted by 
\EQ{
 B_V(c,r):=\{y\in V\mid \|c-y\|_V<r\}, \pq \bar B_V(c,r):=\{y\in V\mid \|c-y\|_V\le r\},}
with center $c\in V$ and radius $r>0$. 
For vectors $x\in V^n$ of finite dimension $n\in\N$, the seminorm is defined by $\|x\|_V:=[\sum_{k=1\etc n}\|x_k\|_V^2]^{1/2}$. 
For two normed spaces $X,Y$, the Banach space of linear bounded operators from $X$ to $Y$ is denoted by $\B(X\to Y)$. 
The normed space of $m\times n$ complex matrices is denoted by $M(m,n)=\B(\C^n\to\C^m)$. 
That of square matrices is abbreviated $M(n)=M(n,n)$. The adjoint and the transpose of $A\in M(m,n)$ are respectively denoted by $A^*,A^\tran\in M(n,m)$. For any vector $\x\in\R^d$, the components are denoted by 
\EQ{
 \x=(\x_1\etc\x_d)=(\x_1,\x^\perp)\in\R\times\R^{d-1}.} 
The $L^2$ inner product on $\R^d$ is denoted by 
\EQ{
 \LR{f|g} := \int_{\R^d} f(x)\ba{g(x)}dx.}
$W^{m,p}(\Om)$ denotes the $L^p$-Sobolev space of regularity $m\in\R$ on an open set $\Om\subset\R^d$, and $H^s(\R^d)=W^{s,2}(\R^d)$. 
$B^s_{p,q}(\R^d)$ denotes the inhomogeneous $L^p$-Besov space of regularity $s\in\R$ and interpolation index $q\in[1,\I]$. 
$\sD(\Om)$ is the space of smooth test functions compactly supported on $\Om$, and $\sS(\R^d)$ denotes the Schwartz class. $\sD'(\Om)$ is the dual space or the space of distributions, and $\sS'(\R^d)$ is the space of tempered distributions. 
All the function spaces are considered with complex values.

For any $\vv\in\R^d$ and $a,b\in\R$, we define  
$\R^d_{<a}(\vv)$ and $\R^d_{(a,b)}(\vv)$ by \eqref{def half}, 
and similarly $\R^d_{\le a}(\vv)$, $\R^d_{>a}(\vv)$, and $\R^d_{\ge a}(\vv)$. 
When $\vv=e_1:=(1,0\etc 0)$, it is omitted as $\R^d_{<a}:=\R^d_{<a}(e_1)$, etc. 
Similarly, for any $Z\subset\sD'(\R^d)$, we denote
\EQ{
 \pt Z_{<a}:=\{f\in Z \mid \supp f\subset\R^d_{<a}\}, \pq Z_{(a,b)}:=\{f\in Z \mid \supp f\subset\R^d_{(a,b)}\}}
as well as for the other subscripts. 
More generally, for any $O\subset\R^d$, we denote 
\EQ{
 Z_O:=\{f\in Z \mid \supp f\subset O\}.}
When $O$ is an open set in $\R^d$, $\sD_O$ is often identified with $\sD(O)$. 
Strictly speaking, they are homeomorphic by the restriction to $O$ and the zero extension to the outside. 

For cut-off arguments, we fix a $C^\I$ function $\chi:\R\to[0,1]$ satisfying
\EQ{
 t\le 1 \implies \chi(t)=1, \pq t\ge 2 \implies \chi(t)=0.}
For any $l>0$, define $\chi^l(t):= \chi(t/l)$ and $\chi_l(t):=1-\chi(t/l)$, 
so that we have
\EQ{ 
 \pt \supp\chi^l \subset (-\I,2l), \pq
 \supp\chi_l \subset (l,\I). }
The smooth cut-off operators in $\x_1$ are defined by 
\EQ{ \label{def X-cut}
 (X^l\fy)(\x):=\chi^l(\x_1)\fy(\x), \pq (X_l\fy)(\x):=\chi_l(\x_1)\fy(\x).}
For mollifiers, we fix $\z\in C^\I(\R^d)$ satisfying 
\EQ{
 0\le \z(\x)=\z(|\x|), \pq \supp\z\subset B_{\R^d}(0,1), \pq \|\z\|_{L^1(\R^d)}=1,}
and let $\z_n(\x):=n^d\z(n\x)\to\de(\x)$ in $\sD'(\R^d)$ as $n\to\I$. 

For any $\de\ge 0$ and any interval $I\subset[0,\I)$, let 
\EQ{
 \pt C^\ndc:=\{f:[0,\I)\to[0,\I) \mid \text{continuous and non-decreasing}\},
 \pr C^\nin:=\{f:[0,\I)\to[0,\I) \mid \text{continuous and non-increasing}\},
 \pr C^\ndc_I:=\{f\in C^\ndc \mid f([0,\I))\subset I\},
 \pq C^\nin_I:=\{f\in C^\nin \mid f([0,\I))\subset I\},
 \pr C^\ndc_\de:=\{f\in C^\ndc \mid f(0)=0,\ f([0,2\de])\subset[0,\tf12]\},
 \pr C^\ndc_+:=\{f\in C^\ndc \mid f(0)=0,\ t>0\implies f(t)>0\}.}
$C^\ndc_\de$ is closed for $\max$, so it is a directed set for the pointwise order. 
Note that $C^\ndc_0=\Cu_{0<\de\le 1}C^\ndc_\de\supset C^\ndc_+$. We will use $C^\ndc_0$ as an index set for the inductive limit to construct function spaces of distributions.

For topological vector spaces $X,Y$, the continuous embedding is denoted by $X\emb Y$. 
For any $m\in[0,\I)$, the Banach space of bounded $C^m$ functions on $\R^d$ is denoted by 
\EQ{
 \B^m:=\{\fy\in C^m(\R^d;\C) \mid \|\fy\|_m:=\max_{|\al|\le m}\sup_{\x\in\R^d}|\p^\al\fy(\x)|<\I\}.}
Note that $\B^m=\B^{m_0}$ for the largest $m_0\in\N$ satisfying $m_0\le m$. 
The trivial extension to non-integer $m$ is for notational convenience. 
Let $\B^0_0$ be the closure of $\sD(\R^d)$ in $\B^0$ and let $\M:=(\B^0_0)'$ be its dual space. 
Each $\fm\in\M$ may be regarded as a (bounded) complex-valued Borel measure on $\R^d$, whose total variation is a finite Borel measure on $\R^d$, denoted by $|\fm|$, satisfying 
\EQ{
 |\fm|(\R^d)/2 \le \|\fm\|_{(\B^0_0)'} =: \|\fm\|_\M \le 2|\fm|(\R^d).}
We also introduce the subspace vanishing on the left:
\EQ{
 \M_+ :=\{ \fm\in\M \mid |\fm|(\R^d_{\le 0})=0\},}
then the Dirichlet condition \hyptag{X-i} with $\vv=e_1$ is equivalent to existence of $l>0$ such that $X^lF\in\M_+$. 
See also Definitions \ref{def:Bm+}--\ref{def:X} for some other function spaces. 

We define the Fourier transform for $\fy,\psi\in\sS(\R^d)$ by
\EQ{
 \hat\fy(\x)=\F\fy(\x):=\tf{1}{(2\pi)^d}\int_{\R^d}\fy(x)e^{-ix\x}dx,
 \pq \F^{-1}\psi(x)=\int_{\R^d}\psi(\x)e^{i\x x}d\x}
which is extended as usual to the tempered distributions $F\in\sS'(\R^d)$ by duality 
\EQ{ \label{def F}
 (\F F)(\fy)=\hat F(\fy):=F(\hat\fy),}
for $\fy\in\sS(\R^d)$. 
The above definition of $\F$ may be less conventional, but it simplifies the constant in the convolution, namely, for any $f\in\sS$ and any $g\in\sS'$, 
\EQ{ \label{F conv}
 \F(fg)=(\F f)*(\F g).}

The Fourier inverse image of the entire space of distributions 
$\F^{-1}\sD'(\R^d)$ is defined as the dual space of 
\EQ{
 \F\sD(\R^d):=\{\F\fy\mid \fy\in\sD(\R^d)\} \subset\sS(\R^d),}
which has the natural topology induced by $\F$ from $\sD(\R^d)$, so that $\F:\sD(\R^d)\to\F\sD(\R^d)$ is homeomorphic. 
These spaces have been studied from the early time of distributions; 
$\F^{-1}\sD'(\R^d)$ is the same as $\mathbf{D}'$ in \cite{Eh} and $Z'$ in \cite[Chapter II]{GeSh}. 
 
The Fourier transform $\F:\F^{-1}\sD'(\R^d)\to\sD'(\R^d)$ is defined by \eqref{def F} for $\fy\in\sD(\R^d)$, satisfying the same algebraic formulas as in $\sS'$. 
In particular, \eqref{F conv} holds for $f\in\F\sD$ and $g\in\F^{-1}\sD'$. 
It is immediate from the definition that $\F:\F^{-1}\sD'(\R^d)\to\sD'(\R^d)$ is homeomorphic, which justifies the notation $\F^{-1}\sD'(\R^d)$. 
More generally, for any topological linear space $X\emb\sD'(\R^d)$, its Fourier inverse image is denoted by
\EQ{
 \F^{-1}X:=\{\F^{-1}f\mid f\in X\}\emb\F^{-1}\sD'(\R^d),} 
with the natural topology induced by $\F^{-1}$, or the initial topology for $\F$, so that $\F:\F^{-1}X\to X$ is homeomorphic.

Since $\sD\emb\sS$ is dense, so is $\F\sD\emb\sS$, hence we have 
$\sS'(\R^d)\emb\F^{-1}\sD'(\R^d)$, which is also dense. 
Indeed, for the standard Littlewood-Paley decomposition using a dyadic partition in the Fourier space $\{\fy_j\}_{j\in\N}\subset\F\sD(\R^d)$, each piece $\F^{-1}\sD'(\R^d)\ni F\mapsto \fy_j*F\in\sS'(\R^d)$ is continuous, and $F=\sum_{j\in\N}\fy_j*F$ on $\F^{-1}\sD'(\R^d)$. 
In such a way, $\F^{-1}\sD'(\R^d)$ is a natural space for the analysis of PDE on $\R^d$, in contrast with $\sD'(\R^d)$. In the latter, the Littlewood-Paley decomposition is not well-defined, 
and both the heat and Schr\"odinger equations are illposed in both time directions, whereas they are wellposed in $\F^{-1}\sD'$ in both time directions.

Note that if $\F$ is replaced with $\F^{-1}$ in the above definition, then we have 
$\F\sD=\F^{-1}\sD\emb\sS$ including the topology, so we could naturally write $\F^{-1}\sD'=\F\sD'$, but we refrain from using this identification, to avoid possible ambiguity or confusion. 
It is also worth noting that $\F^{-1}\X\cap\F\X=\{0\}$ because of the anisotropy.

\section{Spaces of distributions for the Fourier transform of solutions} \label{s:X}
Here we introduce some function spaces consisting of distributions in $\sD'(\R^d)$, for the Fourier transform of the solutions to the Cauchy problem \eqref{GEE}. 
In the following, we fix $\vv:=e_1=(1,0\etc 0)$, without losing generality, since the other case follows just by a linear transform of the spatial coordinates in $\R^d$. 
The main purpose of this section is to define and characterize the function space $\X\emb\sD'(\R^d)$. 

\begin{defn} \label{def:Bm+}
Let $d\in\N$. For $\de,m\in[0,\I)$, we define
\EQ{
 \pt \|\fy\|_m^+:=\max_{|\al|\le m}\sup_{\x\in\R^d_{\ge 0}}|\p^\al\fy(\x)|,
 \pq  \|\fy\|_{m,\de}:=\|X^\de\fy\|_0^+ + \|X_\de\fy\|_m^+, 
 \pr \B^m_+:=\{\fy \in C^m(\R^d;\C) \mid \|\fy\|_m^+<\I\},
 \pq \B^\I_+ := \Ca_{m\in\N}\B^m_+. }
For any $l,m\in[0,\I)$, and $F\in\sD'(\R^d)$, we define
\EQ{ \label{def rolm}
 \ro_l^{m}(F):= \sup\{\re F(\fy) \mid \fy\in\sD_{<l},\ \|\fy\|_m^+ \le 1\}. }
\end{defn}

The functional $F\in\sD'$ may be extended to $\B^m_{+<l}$ if $\ro^m_l(F)<\I$, using that $\sD_{<l}\subset C^m_{<l}$ is dense for the $C^m$ topology (namely for the local uniform convergence including the partial derivatives up to order $m$). 
This extension is needed to define the convolution. 
\begin{lem} \label{lem:ext F}
Let $F\in\sD'(\R^d)$, $l>0$, $m\ge 0$ and $\ro_l^m(F)<\I$. 
Then $F\in\sD'_{\ge 0}$ and 
\EQ{ \label{unif dec F}
 \lim_{R\to\I}\ro_l^m(\chi_R(|\x|)F)=0.} 
Hence $F$ is extended to a unique linear functional $\B^m_{+<l}\to\C$ that is continuous with respect to the $C^m$ topology. 
Moreover, for all $\fy\in \B^m_{+<l}$, 
\EQ{ \label{bd F}
 |F(\fy)| \le \ro_l^m(F)\|\fy\|_m^+.}
\end{lem}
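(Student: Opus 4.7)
The argument handles the four conclusions in order, with conclusion (2), the uniform decay $\ro_l^m(\chi_R(|\x|)F)\to 0$, as the heart of the lemma.

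For $\supp F\subset\R^d_{\ge 0}$: I take $\fy\in\sD(\R^d_{<0})\subset\sD_{<l}$; since $\fy$ vanishes on $\R^d_{\ge 0}$ we have $\|\fy\|_m^+=0$, so for every $c\in\C$ the scaled test function $c\fy$ is again in $\sD_{<l}$ with zero seminorm, and the definition of $\ro_l^m(F)$ forces $\re F(c\fy)\le\ro_l^m(F)\cdot 0=0$. Running $c$ over $\pm 1,\pm i$ yields $F(\fy)=0$. For the uniform decay I argue by contradiction: if it fails, there exist $\e>0$, $R_j\to\I$, and $\fy_j\in\sD_{<l}$ with $\|\fy_j\|_m^+\le 1$ and $\re F(\chi_{R_j}(|\x|)\fy_j)>\e$. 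Each $\chi_{R_j}(|\x|)\fy_j$ is smooth and compactly supported in $\{|\x|>R_j\}\cap\R^d_{<l}$, so after passing to a sufficiently rapidly growing subsequence the resulting supports are pairwise disjoint. The partial sums $\Psi_N:=\sum_{j=1}^N\chi_{R_j}(|\x|)\fy_j\in\sD_{<l}$ then satisfy $\|\Psi_N\|_m^+\le C\sup_j\|\fy_j\|_m^+\le C$ by disjointness of supports (with $C$ depending only on the $C^m$ size of $\chi$), while $\re F(\Psi_N)>N\e$ by linearity; taking $N>C\ro_l^m(F)/\e$ contradicts $\ro_l^m(F)<\I$. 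This disjoint-support trick is where I expect the main difficulty to sit.

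For the extension and the bound \eqref{bd F}, I approximate any $\fy\in\B^m_{+<l}$ by $\fy_n:=\chi^n(|\x|)\cdot(\z_{k_n}*\psi_n)\in\sD_{<l}$, where $\psi_n(\x):=\fy(\x+e_1/n)$ shifts the support to $\R^d_{<l-1/n}$ and $k_n$ is chosen so that the mollifier kernel lies in $B(0,1/n)$, keeping $\supp\fy_n\subset\R^d_{<l}$. A Leibniz expansion in the cutoff $\chi^n$ together with the non-expansiveness of convolution by a probability kernel on the sup seminorm gives $\|\fy_n\|_m^+\le(1+C/n)\|\fy\|_m^+$, while $\fy_n\to\fy$ in $C^m$ on every compact. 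The sequence $F(\fy_n)$ is then Cauchy by the splitting
\[
 |F(\fy_n-\fy_k)|\le\ro_l^m(F)\,\|\chi^R(|\cdot|)(\fy_n-\fy_k)\|_m^+ + \ro_l^m(\chi_R(|\cdot|)F)\,\|\fy_n-\fy_k\|_m^+,
\]
where one first chooses $R$ large so that the second coefficient is small by the uniform decay just established, and then uses local $C^m$-convergence on $B(0,2R)$ to make the first term small as $n,k\to\I$. The same estimate applied to the interleaving of two approximating sequences shows independence of the limit, so $\ti F(\fy):=\lim_n F(\fy_n)$ defines the unique continuous extension agreeing with $F$ on $\sD_{<l}$. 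Finally, passing to the limit in $|F(\fy_n)|\le\ro_l^m(F)\|\fy_n\|_m^+$ and using $\limsup_n\|\fy_n\|_m^+\le\|\fy\|_m^+$ yields \eqref{bd F}.
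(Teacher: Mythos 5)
Your proof is correct and follows essentially the same route as the paper's: the support statement from $\|\fy\|_m^+=0$, the disjoint-support contradiction with $\psi_N=\sum_{j\le N}\chi_{R_j}(|\x|)\fy_j$ for the uniform decay, and then a density/continuity argument for the extension. You provide more explicit detail on the extension step (the leftward shift before mollifying, and the Cauchy-sequence splitting using the uniform decay), which is a cleaner record of why the paper's terse "$F(\fy):=\lim_n F(\z_n*\chi^n(|\x|)\fy)$" works without the mollification pushing the support past $\x_1=l$, but the underlying idea is the same.
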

Note that the space $\B^m_+$ does not assume any decay or uniform continuity for $|\x|\to\I$. 
\begin{proof}
For $\fy\in\sD_{<l}$, \eqref{bd F} is immediate from the definition. 
If $\fy\in\sD_{<0}$, then $\|\fy\|_m^+=0$ implies $F(\fy)=0$, hence $\supp F\subset\R^d_{\ge 0}$. 
Next we prove \eqref{unif dec F}. 
If it is false, then there exist sequences $\fy_n\in\sD_{<l}$ and $R_n\to\I$ such that $\|\fy_n\|_m^+\le 1$, $F(\chi_{R_n}(|\x|)\fy_n)=1$, and $\supp\fy_n\subset B_{\R^d}(0,R_{n+1})$. Let 
\EQ{
 \psi_N:=\sum_{1\le n \le N} \chi_{R_n}(|\x|)\fy_n \in\sD_{<l}. }
Since the summand is supported on $R_n<|\x|<R_{n+1}$, which is mutually disjoint, we have $\|\psi_N\|_m^+ \lec 1$, whereas $F(\psi_N)=N$, contradicting \eqref{bd F} on $\sD_{<l}$.
Hence \eqref{unif dec F} is true. 
It implies that $F|_{\sD_{<l}}$ is continuous with respect to the $C^m$ topology. 
Then its unique extension to $\B^m_{+<l}$ is given by the density of $\sD_{<l}$. 
In fact, it may be defined by using mollifiers, 
$F(\fy):=\lim_{n\to\I}F(\z_n*\chi^n(|\x|)\fy)$. 
\end{proof}

\begin{defn} \label{def:X}
For $\mu,\nu\in C^\ndc$, $l\in(0,\I)$, and $F\in\sD'(\R^d)$, define
\EQ{
 \pt \ro^{\mu,\nu}_l(F):=\inf\{B\ge 0 \mid 0<t\le l\implies \ro^{\mu(t)}_t(F)\le B\nu(t)\}, \pq(\inf\empt=\I),  
 \pr \ro^{\mu,\nu}(F):=\sup_{l>0}\ro^{\mu,\nu}_l(F),
 \pq \X^{\mu,\nu}:=\{F\in\sD'(\R^d)\mid \ro^{\mu,\nu}(F)<\I\}.}
$\X^{\mu,\nu}\emb\sD'(\R^d)$ is a Banach space. 
For any $\mu_0,\mu_1,\nu_0,\nu_1\in C^\ndc$ satisfying $(\mu_0,\nu_0)\le(\mu_1,\nu_1)$ (pointwisely), we have 
$\ro^{\mu_1,\nu_1}(F) \le \ro^{\mu_0,\nu_0}(F)$ and $\X^{\mu_0,\nu_0}\emb\X^{\mu_1,\nu_1}$, so that we may consider their union as the inductive limit in the category of locally convex topological vector spaces, denoted by
\EQ{
 \X:=\Cu_{\mu,\nu\in C^\ndc_0} \X^{\mu,\nu} \emb \sD'(\R^d).}
The subscript $0$ in the index set $C^\ndc_0$ requires $\mu(0)=\nu(0)=0$, which is corresponding to the boundary condition \hyptag{X-i} at $\x_1=0$, and crucial for our analysis of the PDE \eqref{GEE}. 
When the direction $e_1$ is replaced with a general $\vv\in\R^d\setminus\{0\}$, it is defined by 
\EQ{
 \X(\vv) := \{F(U_\vv \x) \mid F\in\X\},}
where $U_\vv$ is any invertible linear transform of $\R^d$ such that $U_\vv \vv=e_1$. 
\end{defn}
In this paper, we consider the inductive limits only in the category of locally convex topological vector spaces. See e.g., \cite{Bour} about basic properties of them, though we will use only a few of them. 
$\X\emb\sD'(\R^d)$ ensures that $\X$ is Hausdorff. 
Roughly speaking, $-\mu(l),\nu(l)$ are respectively the regularity order and the size of $F$ on $\R^d_{<l}$. 
By definition, we have for any $F\in\X^{\mu,\nu}$, $l>0$ and $\fy\in\sD_{<l}$,
\EQ{ \label{bd on Ffy}
 |F(\fy)| \le \nu(l)  \ro^{\mu,\nu}_l(F) \|\fy\|_{\mu(l)}^+,}
where $\ro^{\mu,\nu}_l(F)$ is the optimal non-negative number satisfying this inequality. Lemma \ref{lem:ext F} extends it to $\fy\in\B^{\mu(l)}_{+<l}$. 
If $\mu\in C^\ndc_0$ then $\mu\in C^\ndc_\de$ for some $\de>0$, $\mu(l)\le1/2<1$ for $l\le2\de$, so that we may weaken the norm for $\fy$ as 
\EQ{ \label{bd on Ffy-de}
 |F(\fy)| \pt\le |F(X^\de\fy)|+|F(X_\de\fy)|
  \le \nu(l)\ro^{\mu,\nu}_l(F)\|\fy\|_{\mu(l),\de}.}

To change the size weight $\nu$, we have 
\begin{lem} \label{lem:weight change}
For any $\ka\in C^\ndc$, $\mu,\nu\in C^\ndc$, $F\in\sD'(\R^d)$ and $l\in(0,\I)$, we have 
\EQ{ \label{size weight mult}
 \ro^{\mu,\nu}_l(F) \le \ka(l)\ro^{\mu,\ka\nu}_l(F).}
Let $\mu_j,\nu_j\in C^\ndc$ ($j=0,1$) and $T\in\B(\X^{\mu_1,\nu_1}\to\X^{\mu_0,\nu_0})$ satisfy
\EQ{ 
 \ro^{\mu_0,\nu_0}_l(TF) \le B(l)\ro^{\mu_1,\nu_1}_l(F),}
for some non-deacreasing $B:[0,\I)\to[0,\I)$, all $F\in\X^{\mu_1,\nu_1}$ and all $l\in(0,\I)$. Then for any bounded $\ka\in C^\ndc$, any $F\in\X^{\mu_1,\ka\nu_1}$ and $l\in(0,\I)$, we have
\EQ{ \label{size weight tran}
 \ro^{\mu_0,\ka\nu_0}_l(TF) \le B(l)\ro^{\mu_1,\ka\nu_1}_l(F).}
\end{lem}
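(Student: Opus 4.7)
The first inequality \eqref{size weight mult} I would prove by unfolding the defining infimum. For each $t\in(0,l]$, the definition of $\ro^{\mu,\ka\nu}_l$ gives
\EQ{
 \pt \ro^{\mu(t)}_t(F) \le \ro^{\mu,\ka\nu}_l(F)\,\ka(t)\nu(t) \le \ka(l)\,\ro^{\mu,\ka\nu}_l(F)\,\nu(t),
}
where the second step uses that $\ka$ is non-decreasing with $t\le l$. Since $\ka(l)\ro^{\mu,\ka\nu}_l(F)$ bounds $\ro^{\mu(t)}_t(F)$ as a multiple of $\nu(t)$ uniformly in $t\in(0,l]$, it is admissible in the infimum defining $\ro^{\mu,\nu}_l(F)$, which yields \eqref{size weight mult}.

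For \eqref{size weight tran}, the key maneuver is to invoke the hypothesis at each intermediate scale $t\in(0,l]$ rather than only at the outer scale $l$. Boundedness of $\ka$ is used at the outset: if $M:=\sup\ka$, then $\ro^{\mu_1(t)}_t(F) \le M\,\nu_1(t)\,\ro^{\mu_1,\ka\nu_1}(F)$ for all $t>0$, so any $F\in\X^{\mu_1,\ka\nu_1}$ automatically lies in $\X^{\mu_1,\nu_1}$ and $TF\in\X^{\mu_0,\nu_0}$ is well-defined by the hypothesis. Now I would fix $t\in(0,l]$, apply the hypothesis with $l$ replaced by $t$, and read off the component at scale $t$:
\EQ{
 \pt \ro^{\mu_0(t)}_t(TF) \le \nu_0(t)\,\ro^{\mu_0,\nu_0}_t(TF) \le \nu_0(t)\,B(t)\,\ro^{\mu_1,\nu_1}_t(F).
}
To replace $\ro^{\mu_1,\nu_1}_t(F)$ by $\ka(t)\ro^{\mu_1,\ka\nu_1}_l(F)$, note that for every $s\in(0,t]$, the monotonicity of $\ka$ gives $\ro^{\mu_1(s)}_s(F) \le \ro^{\mu_1,\ka\nu_1}_l(F)\,\ka(s)\nu_1(s) \le \ka(t)\,\ro^{\mu_1,\ka\nu_1}_l(F)\,\nu_1(s)$, so that $\ro^{\mu_1,\nu_1}_t(F)\le\ka(t)\ro^{\mu_1,\ka\nu_1}_l(F)$. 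Combining this with $B(t)\le B(l)$,
\EQ{
 \pt \ro^{\mu_0(t)}_t(TF) \le B(l)\,\ka(t)\nu_0(t)\,\ro^{\mu_1,\ka\nu_1}_l(F),
}
and since $t\in(0,l]$ was arbitrary, $B(l)\ro^{\mu_1,\ka\nu_1}_l(F)$ is admissible in the infimum defining $\ro^{\mu_0,\ka\nu_0}_l(TF)$, giving \eqref{size weight tran}.

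Both halves reduce to bookkeeping with the defining infima and the monotonicity of $\ka$ and $B$, so there is no serious analytic obstacle. The only conceptual point to identify is the decision to re-invoke the hypothesis at each intermediate scale $t$ rather than globally at $l$; this is exactly what lets the non-constant factor $\ka(t)$ enter pointwise instead of being replaced by the coarser constant $\ka(l)$, which would give only a weaker bound analogous to \eqref{size weight mult}.
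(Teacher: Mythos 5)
Your proof is correct and follows essentially the same route as the paper: unfold the infimum defining $\ro^{\mu,\nu}_l$, invoke the hypothesis at each intermediate scale $t\in(0,l]$, and then feed in the first inequality \eqref{size weight mult} at scale $t$ together with the monotonicity of $\ka$ and $B$. Your explicit remark that boundedness of $\ka$ is only needed to guarantee $F\in\X^{\mu_1,\nu_1}$ matches the paper's own observation.
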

\begin{proof}
For any $0<t\le l<\I$ and $F\in\sD'$, we have 
\EQ{
 \ro^{\mu(t)}_t(F) \le \ro^{\mu,\ka\nu}_l(F)\ka(t)\nu(t) \le \ka(l)\ro^{\mu,\ka\nu}_l(F)\nu(t),}
which implies \eqref{size weight mult}. 
Using \eqref{size weight mult}, we have 
\EQ{
 \ro^{\mu_0(t)}_t(TF) \pt\le \nu_0(t)\ro^{\mu_0,\nu_0}_t(TF) 
 \pn\le \nu_0(t)B(t)\ro^{\mu_1,\nu_1}_t(F)  
  \le \nu_0(t)B(l)\ka(t)\ro^{\mu_1,\ka\nu_1}_t(F),}
which implies \eqref{size weight tran}. 
The boundedness of $\ka$ was used merely to ensure $F\in\X^{\mu_1,\nu_1}$. 
\end{proof}

In the analysis of the space $\X$, a basic tool is the smooth cut-off $X_\de,X^\de$ in $\x_1$. 
Here we prepare a few estimates on them. 
For the factor coming from the Leibniz rule, we define $M_\de^\chi \in C^\ndc_{[1,\I)}$ for 
all $\de\in(0,1]$, first on $[0,\tf12]\cup\N$ by  
\EQ{
 M_\de^\chi(l)=\CAS{1+l, &(0\le l\le \tf12),\\
 \sum_{k=0}^{l} \tf{l!}{k!(l-k)!}\|\p^k\chi_\de\|_{L^\I(\R)}, &(l\in\N),} }
and then extend it to $[0,\I)$ as a continuous and increasing function, e.g., the piecewise linear function. 

\begin{lem} \label{lem:mult}
Let $\de,l\in(0,\I)$, $m\in[0,\I)$ and $F\in\sD'(\R^d)$. Then 
\EQ{
 \ro^m_l(X_\de F) \le M^\chi_\de(m)\ro^m_l(F), \pq \ro^m_l(X^\de F) \le M^\chi_\de(m)\ro^m_{\min(l,2\de)}(F).}
Let $\mu,\nu,\ti\mu,\ti\nu\in C^\ndc$ satisfying $\mu\le\ti\mu$. Let $0<L,H,l<\I$ and $F\in\sD'(\R^d)$. 
\begin{enumerate}
\item If $M^\chi_L(\mu)\nu \le \ti\nu$ on $[L,\I)$, 
then $\ro_l^{\ti\mu,\ti\nu}(X_L F) \le \ro_l^{\mu,\nu}(F)$. 
\item If $M^\chi_H(\mu)\nu \le \ti\nu$ on $(0,2H]$, 
then $\ro_l^{\ti\mu,\ti\nu}(X^H F) \le \ro_l^{\mu,\nu}(F)$. 
\item If $M^\chi_L(\mu)M^\chi_H(\mu) \nu \le \ti\nu$ on $[L,2H]$, then $\ro_l^{\ti\mu,\ti\nu}(X_LX^H F) \le \ro_l^{\mu,\nu}(F)$. 
\end{enumerate}
\end{lem}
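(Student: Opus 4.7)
\textbf{Proof proposal for Lemma \ref{lem:mult}.}

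The plan is to start from the two elementary multiplication bounds and then deduce parts (1)--(3) by combining them with the definition of $\rho_l^{\mu,\nu}$ and the pointwise hypotheses on the weights.

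For the first two inequalities, the key observation is that both cutoffs $\chi_\de(\x_1)$ and $\chi^\de(\x_1)$ depend only on the single coordinate $\x_1$, so Leibniz's rule applied to $\chi\fy$ produces only $\x_1$-derivatives of $\chi$:
\[
 \p^\al(\chi\fy) = \sum_{k\le \al_1}\binom{\al_1}{k}(\p^k\chi)\p^{\al-ke_1}\fy.
\]
Taking $\sup$ over $\R^d_{\ge 0}$, bounding the binomial coefficients by $\binom{m}{k}$ for $|\al|\le m$, and using that $\|\p^k\chi^\de\|_\I=\|\p^k\chi_\de\|_\I$ for $k\ge 1$ while $\|\chi^\de\|_\I,\|\chi_\de\|_\I\le 1$, yields $\|\chi\fy\|_m^+\le M^\chi_\de(m)\|\fy\|_m^+$ in both cases. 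Since $\chi_\de\fy$ and $\chi^\de\fy$ remain in $\sD_{<l}$ and $\sD_{<\min(l,2\de)}$ respectively, feeding the bounds into the defining supremum \eqref{def rolm} gives the two inequalities immediately.

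For parts (1)--(3), fix $0<t\le l$ and $\fy\in\sD_{<t}$ with $\|\fy\|^+_{\ti\mu(t)}\le 1$. Using $\mu\le\ti\mu$, one also has $\|\fy\|^+_{\mu(s)}\le 1$ for any $s\le t$, since $\|\cdot\|^+_m$ is monotone in $m$. In each case we want to bound $|F(\chi\fy)|$ where $\chi$ is $\chi_L$, $\chi^H$, or $\chi_L\chi^H$, by applying the estimate
\EQ{
 |F(\psi)|\le \|\psi\|_{\mu(s)}^+\,\ro^{\mu(s)}_s(F)\le \|\psi\|_{\mu(s)}^+\,\nu(s)\,\ro^{\mu,\nu}_l(F)
}
to $\psi=\chi\fy\in\sD_{<s}$, with $s$ chosen as the relevant ``active'' scale: $s=t$ for $X_L$, $s=\min(t,2H)$ for $X^H$, and $s=\min(t,2H)$ for $X_LX^H$. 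The Leibniz bound above, applied once or twice, provides $\|\chi\fy\|^+_{\mu(s)}\le M^\chi_L(\mu(s))M^\chi_H(\mu(s))\|\fy\|^+_{\mu(s)}$ (with the appropriate factors for the one-cutoff cases). Three trivial degenerate cases need to be separated first: $t\le L$ makes $\chi_L\fy=0$ by disjoint supports; in part (3) $L>2H$ makes the product cutoff identically zero. Outside those, $s$ lies in $[L,\I)$, $(0,2H]$, or $[L,2H]$ according to the case, exactly matching each hypothesis, so $M^\chi_{\bullet}(\mu(s))\nu(s)\le\ti\nu(s)\le\ti\nu(t)$ by monotonicity of $\ti\nu$.

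Putting the pieces together gives $|F(\chi\fy)|\le \ti\nu(t)\ro^{\mu,\nu}_l(F)$ for every admissible $\fy$, hence $\ro^{\ti\mu(t)}_t(X\,F)\le \ti\nu(t)\ro^{\mu,\nu}_l(F)$, and taking the infimum over $t\in(0,l]$ in the definition of $\ro^{\ti\mu,\ti\nu}_l$ yields the three asserted inequalities. The main (minor) obstacle is being careful about the choice of the intermediate scale $s$: it must simultaneously be small enough that the cut-off test function lies in $\sD_{<s}$ (so that the $\ro^{\mu(s)}_s(F)$ bound applies) and lie in the interval where the hypothesis on $M^\chi\,\nu\le\ti\nu$ holds, and the monotonicity $s\le t$ must then absorb the loss $\ti\nu(s)\le\ti\nu(t)$.
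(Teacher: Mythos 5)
Your proof is correct and follows essentially the same route as the paper: the elementary bounds come from the one-dimensional Leibniz rule (exploiting that the cutoff depends only on $\x_1$) together with $\|\p^k\chi^\de\|_\I=\|\p^k\chi_\de\|_\I$ for $k\ge1$, and parts (1)--(3) then follow by choosing the intermediate scale $s$ (which is $t$, $\min(t,2H)$, or $\min(t,2H)$ respectively), applying the elementary bounds once or twice, and matching the interval on which the weight hypothesis is assumed. The only cosmetic difference is that you carry a generic $t\le l$ explicitly through the argument, whereas the paper writes it with $l$ in place of $t$, but the content is identical.
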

\begin{proof}
By the Leibniz rule, we have for any $m\in[0,\I)$ and $\fy\in\sD$, 
\EQ{
 \max(\|X^\de\fy\|_m^+,\|X_\de\fy\|_m^+) \pt\le \sup_{|\al|\le m} \sum_{0\le k\le\al_1} \tf{\al_1!}{(\al_1-k)!k!}\|\chi_\de^{(k)}(\x_1)\|_0^+\|\p^{\al-ke_1}\fy\|_0^+ 
 \pr\le M^\chi_\de(m)\|\fy\|_m^+.}
Hence for any $\fy\in\sD_{<l}$ we have, using $\supp X^\de\fy\subset\R^d_{<2\de}$ as well, 
\EQ{
 \pt |X_\de F(\fy)| \le \ro^m_l(F)\|X_\de\fy\|_m^+ \le \ro^m_l(F) M^\chi_\de(m) \|\fy\|_m^+,
 \pr |X^\de F(\fy)| \le \ro^m_{\min(l,2\de)}(F)\|X^\de\fy\|_m^+ \le \ro^m_{\min(l,2\de)}(F) M^\chi_\de(m)\|\fy\|_m^+.}
from which the estimates in $\ro^m_l$ follow.
 
In the case of (1), we have $X_L F(\fy)=0$ for $l\le L$ and $\fy\in\sD_{<l}$, while for $l\ge L$,  
\EQ{
 \ro^{\ti\mu(l)}_l(X_L F) \le \ro^{\mu(l)}_l(X_L F) \le M^\chi_L(\mu(l))\ro_l^{\mu,\nu}(F)\nu(l) \le  \ti\nu(l)\ro_l^{\mu,\nu}(F).}
In the case of (2), we have similarly for all $l>0$ and $l':=\min(l,2H)$
\EQ{
 \ro^{\ti\mu(l)}_l(X^H F) \le \ro^{\mu(l')}_{l'}(X^H F) \pt\le M^\chi_H(\mu(l')) \ro^{\mu(l')}_{l'}(F)
 \le \ti\nu(l) \ro_l^{\mu,\nu}(F).}
In the case of (3), we have similarly for $l\ge L$, 
\EQ{
 \ro^{\ti\mu(l)}_l(X_LX^H F) \le M^\chi_L(\mu(l'))M^\chi_H(\mu(l')) \ro^{\mu(l')}_{l'}(F)
 \le \ti\nu(l) \ro_l^{\mu,\nu}(F).} 
Thus we obtain the estimates in $\X^{\ti\mu,\ti\nu}$ for all the cases (1)--(3). 
\end{proof}

\subsection{Characterization of $\X$}
Now we see that the definition of $\X$ in Section \ref{ss:GWPX} is the same as in this section. 
\begin{lem} \label{lem:defX}
For any $F\in\sD'(\R^d)$, $F\in\X$ is equivalent to \hyptag{X-i}--\hyptag{X-ii} with $\vv=e_1$. 
Moreover, for any $F\in\X$ and $\fy\in\sD(\R^d)$, we have 
\EQ{ \label{E X}
 F(\fy) = \lim_{l\to+0}F(X_l\fy).}
\end{lem}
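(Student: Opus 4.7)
The plan is to prove the equivalence $F\in\X\iff$\hyptag{X-i}$\wedge$\hyptag{X-ii} by two separate implications, and then to deduce the limit formula from a direct estimate on $F(X^l\fy)$. The key tool throughout will be the smooth cut-off identity $X^l+X_l=1$ from \eqref{def X-cut}, which lets me separate measure-type behavior near the hyperplane $\x_1=0$ from distributional behavior bounded away from it.

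For the forward direction, I would start with $F\in\X^{\mu,\nu}$ where $\mu\in C^\ndc_\de$ for some $\de>0$. Since $\mu(t)\le 1/2<1$ for $t\le 2\de$, inequality \eqref{bd on Ffy} reduces to $|F(\fy)|\le\nu(l)\ro^{\mu,\nu}(F)\|\fy\|_0^+$ for $\fy\in\sD_{<l}$, $l\le 2\de$. Combined with $\supp F\subset\R^d_{\ge 0}$ from Lemma \ref{lem:ext F}, a Hahn--Banach extension followed by the Riesz--Markov theorem on the locally compact Hausdorff space $[0,2\de)\times\R^{d-1}$ should produce a finite complex Borel measure $\fm$ supported there with $F(\fy)=\int\fy\,d\fm$ on $\sD_{<2\de}$. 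The estimate $|\fm|(\R^d_{[0,l)})\lec\nu(l)$ together with $\nu(0)=0$ then forces $|\fm|(\{\x_1=0\})=0$, hence $|\fm|(\R^d_{\le 0})=0$, giving \hyptag{X-i}. For \hyptag{X-ii}, I would fix any $a>0$, set $m:=\lceil\mu(a)\rceil$, and use $\|\fy\|_m^+\le\|\fy\|_{W^{m,\infty}(\R^d_{<a})}$ (which holds for $\fy\in\sD$ with $\supp\fy\subset\R^d_{<a}$, as $\fy$ vanishes outside $\R^d_{<a}$) to obtain $|F(\fy)|\lec\|\fy\|_{W^{m,\infty}(\R^d_{<a})}$, identifying $F|_{\R^d_{<a}}\in W^{-m,1}(\R^d_{<a})=(W^{m,\infty}_0(\R^d_{<a}))^*$; the concrete $L^1$-representation follows from standard Sobolev duality theory.

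For the reverse direction, given $F$ satisfying both conditions with parameter $a>0$, integers $m_l\in\N$ and functions $\{f_\al\}\subset L^1(\R^d)$ from \hyptag{X-ii}, I would split any $\fy\in\sD_{<l}$ as $\fy=X^{l'}\fy+X_{l'}\fy$ with $l':=\min(l,a)/2$. The low-cutoff piece $X^{l'}\fy\in\sD_{<2l'}\subset\sD_{<a}$ is controlled via \hyptag{X-i}: $|F(X^{l'}\fy)|\le|\fm|(\R^d_{[0,2l')})\,\|\fy\|_0^+$, which tends to $0$ as $l'\to 0^+$ by the boundary decay of $\fm$. The high-cutoff piece $X_{l'}\fy$ has $\partial^\al(X_{l'}\fy)$ supported in $\{\x_1>l'\}\subset\R^d_{\ge 0}$, so the concrete representation from \hyptag{X-ii} combined with the Leibniz rule yields $|F(X_{l'}\fy)|\le\sum\|f_\al\|_{L^1}\|\partial^\al(X_{l'}\fy)\|_\I\lec M^\chi_{l'}(m_l)\|\fy\|_{m_l}^+$ (the support property makes the $L^\I$-sup effectively an $\R^d_{\ge 0}$-sup). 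Hence $\ro^{m_l}_l(F)<\I$ with a non-decreasing dependence on $l$, and with a vanishing measure piece as $l\to 0^+$. Taking continuous non-decreasing majorants $\mu,\nu\in C^\ndc_0$ of these discrete data then gives $F\in\X^{\mu,\nu}\subset\X$.

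For the limit formula, $X^l+X_l=1$ gives $F(\fy)-F(X_l\fy)=F(X^l\fy)$ with $X^l\fy\in\sD_{<2l}$. For $l\le\de$ (so $\mu(2l)\le 1/2$), estimate \eqref{bd on Ffy} yields $|F(X^l\fy)|\le\nu(2l)\ro^{\mu,\nu}(F)\|X^l\fy\|_0^+\le\nu(2l)\ro^{\mu,\nu}(F)\|\fy\|_0^+\to 0$ as $l\to 0^+$, since $\nu(0)=0$. The main obstacle I anticipate is the concrete $L^1$-representation in the forward direction of \hyptag{X-ii}: passing from the abstract identification $F|_{\R^d_{<a}}\in W^{-m,1}(\R^d_{<a})$ to a sum $\sum\int\p^\al\fy\,f_\al dx$ with globally $L^1$ functions $f_\al$ requires care because $\R^d_{<a}$ is unbounded in the orthogonal directions, but this passage is a standard (if not entirely trivial) piece of Sobolev space theory.
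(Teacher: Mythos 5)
Your forward direction for \hyptag{X-i}, your reverse direction (modulo some imprecision in the splitting scheme, which is unnecessary: for $l<a$ one may apply the measure bound from \hyptag{X-i} directly, as the paper does), and your proof of \eqref{E X} all track the paper's arguments reasonably well. The genuine gap is in the forward direction of \hyptag{X-ii}, which is where the paper does the real work and where your proposal punts.

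The problem is your claimed identification $W^{-m,1}(\R^d_{<a})=(W^{m,\infty}_0(\R^d_{<a}))^*$. This equality is false, and the failure is not a technicality. From $|F(\fy)|\lec\|\fy\|_{W^{m,\infty}}$ and Hahn--Banach/Riesz--Markov you obtain a representation $F|_{\R^d_{<a}}=\sum_{|\al|\le m}\p^\al\fm_\al$ with finite Borel \emph{measures} $\fm_\al$, not $L^1$ functions; the dual of $W^{m,\infty}_0$ is $\sum_{|\al|\le m}\p^\al\M$, which strictly contains $W^{-m,1}=\sum_{|\al|\le m}\p^\al L^1$. Upgrading from measures to $L^1$ densities is not ``standard Sobolev duality''; it costs a derivative. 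Indeed, the paper proves $X^lF\in W^{-m,1}(\R^d)$ only for $m\ge\mu(2l)+1$, i.e.\ with one extra order of regularity, and the argument for that step is the constructive heart of the forward direction: mollify $X^lF$ by $\z_n$ to get $f_n\in C^\I$, bound $\|f_n\|_{L^1}\lec n^{m-1}$, write $\z_n=\sum_{|\al|\le m}\p^\al\psi_n^{(\al)}$ with $\psi_n^{(\al)}=(1-\De)^{-m}c_\al\p^\al\z_n$, and crucially show $\psi_n^{(\al)}*f_n$ is bounded in $W^{1,1}(\R^d)$. That $W^{1,1}$ bound --- rather than a mere $L^1$ or $\M$ bound --- is what prevents mass concentration, gives compactness in $L^1_{loc}$ (Rellich on balls plus diagonal extraction), and forces the subsequential limits $g_\al$ to be genuine $L^1$ functions (Fatou), so that $X^lF=\sum_{|\al|\le m}\p^\al g_\al\in W^{-m,1}$. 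Your proposal sets $m:=\lceil\mu(a)\rceil$ (without the extra derivative) and declares the $L^1$-representation a standard consequence of duality; neither is correct. You did flag this step as ``the main obstacle,'' but the obstacle is not the unboundedness of $\R^d_{<a}$ in the orthogonal directions --- it is that the dual space is genuinely larger than $W^{-m,1}$, and overcoming that requires the mollification-and-compactness argument above, which your plan does not contain.
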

The above identity \eqref{E X} allows us to ignore the test functions $\fy$ on $\R^d_{\le 0}$. More precisely, let $\sR:\sD'(\R^d)\to\sD'(\R^d_{>0})$ denote the restriction from $\R^d$ to $\R^d_{>0}$, 
and for any $G\in\sD'(\R^d_{>0})$ and $\fy\in\sD(\R^d)$, let 
\EQ{
 \sE G(\fy) := \lim_{l\to+0}G(X_l\fy).}
The limit does not converge for general $G\in\sD'(\R^d_{>0})$, but when it does for all $\fy\in\sD(\R^d)$, we have $\sE G\in\sD'(\R^d)$. \eqref{E X} may be rewritten as: $F=\sE\sR F$ for all $F\in\X$. 
\begin{proof}
For any $\de>0$, $\mu\in C^\ndc_\de$, $\nu\in C^\ndc_0$ and any $F\in\X^{\mu,\nu}$, 
Lemma \ref{lem:ext F} implies $\supp F\subset\R^d_{\ge 0}$
and \eqref{bd on Ffy} for any $l>0$ and $\fy\in\B^{\mu(l)}_{+<l}$. 
In particular, 
\EQ{
 |(X^\de F)(\fy)| \le \nu(l)\ro^{\mu,\nu}(F)\|\fy\|_0,}
for all $\fy\in\B^0$, so we have $X^\de F\in\M$ with the total variation satisfying
\EQ{
 |X^\de F|(\R^d_{<l}) \le \nu(l)\ro^{\mu,\nu}(F)\to 0 \pq(l\to+0).}
Hence by the dominated convergence, $X^\de F\in\M_+$, 
It also implies $F(X^l\fy)\to 0$ as $l\to+0$ for any $\fy\in\sD$, hence $F(\fy)=\sE \sR F(\fy)$.

For larger $\x_1$, it suffices to show for each $l>0$ that $X^lF\in W^{-m,1}(\R^d)$ for any $m\in\N$ satisfying $m-1\ge \mu(2l)$. 
Henceforth we ignore the dependence of constants on $F,l,m$. 
Define $\{c_\al\}_{|\al|\le m}\subset\Z$ by
\EQ{
 (1-\De)^m = \sum_{|\al|\le m}c_\al \p^{2\al},}
and let $\psi_n^{(\al)}:=(1-\De)^{-m}c_\al\p^\al\z_n\in\sS$ and $\psi_n^{(\al)\da}(\x):=\psi_n^{(\al)}(-\x)$. Then 
we have $\sum_{|\al|\le m}\p^\al\psi_n^{(\al)}=\z_n$. 
Let $f_n:=\z_n*X^lF$. Then $f_n\in C^\I$, and we have from \eqref{bd on Ffy}, 
\EQ{
 |\LR{f_n|\bar\fy}| = |X^lF(\z_n*\fy)| \lec \|\z_n*\fy\|_{m-1} \lec n^{m-1}\|\fy\|_0 }
for all $\fy\in\sD$, which implies $\|f_n\|_{L^1(\R^d)}\lec n^{m-1}$. Similarly we have, for $|\be|\le 1$, 
\EQ{
  |\LR{\p^\be\psi_n^{(\al)}*f_n|\bar\fy}| \pt= |X^lF(\p^\be \psi_n^{(\al)\da}*\z_n* \fy)| 
 \pr\lec \|\p^\be \psi_n^{(\al)\da}*\z_n*\fy\|_{m-1} \lec \|\z_n*\fy\|_0 \lec \|\fy\|_0.}
Hence $\psi_n^{(\al)}*f_n$ is bounded in $W^{1,1}(\R^d)$, and so convergent along a subsequence in $L^1_{loc}(\R^d)$ for $|\al|\le m$. 
Let $g_\al\in L^1(\R^d)$ be the limit. Then
\EQ{
 X^lF(\fy) = \lim_{n\to\I}\LR{\z_n*f_n|\bar\fy}\pt=\lim_{n\to\I} \sum_{|\al|\le m}\LR{\psi_n^{(\al)}*f_n|(-\p)^\al\bar\fy}
 \pr=\sum_{|\al|\le m}\LR{g_\al|(-\p)^\al \bar\fy}.}
Hence $X^lF=\sum_{|\al|\le m}\p^\al g_\al\in W^{-m,1}(\R^d)$. Thus \hyptag{X-i}--\hyptag{X-ii} are satisfied.

Conversely, suppose that $F\in\sD'(\R^d)$ satisfies \hyptag{X-i}--\hyptag{X-ii}. 
By \hyptag{X-i}, we have $|F(\fy)|\le 2|\fm|(\R^d_{<l})\|\fy\|_0^+$ for $\fy\in\sD_{<l}$ and small $l>0$, 
and by \hyptag{X-ii}, $|F(\fy)|\le\|F|_{\R^d_{<l}}\|_{W^{-m,1}}\|\fy\|_m^+$ for any $l>0$, $\fy\in\sD_{(0,l)}$ and some $m=m(l)\in\N$. 
Since $|\fm|(\R^d_{<l})\to|\fm|(\R^d_{\le 0})=0$ as $l\to +0$, 
we may find $\mu,\nu\in C^\ndc_0$ such that \eqref{bd on Ffy} holds for all $l>0$ and $\fy\in\sD_{<l}$. 
Then $F\in\X^{\mu,\nu}\subset\X$. 
\end{proof}

\subsection{Local basis in $\X$}
Here we characterize the topology of $\X$ by constructing a local basis at $0$. 
For any $m\in\N_0$ and $\nu\in C_+^\ndc$, let 
\EQ{
 \sB^m(\nu):=\{F\in\X \mid \sup_{l>0}\ro^m_l(F)/\nu(l) \le 1\}.}
For any $R_0,R_1\in(0,\I)$ and any $N=\{N_m\}_{m\in\N}\subset C_+^\ndc$, let 
\EQ{
 \pt \sB(N):=\Cu_{M\in\N}\sum_{1\le m\le M} \sB^m(N_m), 
 \pr \sB(R_0,R_1,N):=\bar B_{\M_+}(0,R_0) + \X_{\ge R_1} + \sB(N). }
Since $\sB(R_0,R_1,N)$ is the increasing limit of the sum of balls and a subspace, it is an absolutely convex subset of $\X$. Let $\sB_\X$ be the collection of all such $\sB(R_0,R_1,N)$. 
\begin{prop}
$\sB_\X$ is a local basis at $0$ of $\X$. 
In other words, a subset of $\X$ is a neighborhood of $0\in\X$ iff it contains a member of $\sB_\X$. 
\end{prop}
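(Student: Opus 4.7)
The plan is to use the universal property of the locally convex inductive limit: an absolutely convex subset $U\subset\X$ is a neighborhood of $0$ in $\X$ if and only if $U\cap\X^{\mu,\nu}$ is a neighborhood of $0$ in the Banach space $\X^{\mu,\nu}$ for every $(\mu,\nu)\in C^\ndc_0\times C^\ndc_0$. The proposition then splits into two claims: (I) each $\sB(R_0,R_1,N)$ is absolutely convex and satisfies this local condition, and (II) every absolutely convex neighborhood of $0$ in $\X$ contains some $\sB(R_0,R_1,N)$.

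For (I), absolute convexity follows from the Minkowski sum of absolutely convex sets being absolutely convex, together with $\sB(N)$ being an increasing union of such. For the neighborhood condition at $(\mu,\nu)$, I would fix $\de\in(0,R_1/2)$ small enough that $\mu(2\de)\le 1/2$ and set $M:=\max\{1,\lceil\mu(2R_1)\rceil\}$. Given $F\in\X^{\mu,\nu}$ with $\ro^{\mu,\nu}(F)$ sufficiently small, apply the canonical splitting $F=X^\de F+X_\de X^{R_1}F+X_{R_1}F$, valid because $R_1>2\de$. By Lemma \ref{lem:ext F}, $X^\de F$ is a bounded measure in $\M_+$ of $\M$-norm $\lec\nu(2\de)\,\ro^{\mu,\nu}(F)$, which is $\le R_0$ for small enough $\ro^{\mu,\nu}(F)$. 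By Lemma \ref{lem:mult}(3), together with the fact that $X_\de X^{R_1}F$ vanishes on $\sD_{<\de}$, one gets $\ro^M_l(X_\de X^{R_1}F)\lec\nu(\min(l,2R_1))\,\ro^{\mu,\nu}(F)$ for $l\ge\de$ and $0$ for $l\le\de$; using $N_M(\de)>0$, this falls pointwise below $N_M(l)$ for small $\ro^{\mu,\nu}(F)$, placing the middle piece in $\sB^M(N_M)\subset\sB(N)$. And $X_{R_1}F$ is supported in $\R^d_{>R_1}$, hence lies in $\X_{\ge R_1}$.

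For (II), given an absolutely convex neighborhood $U$ with radii $\e(\mu,\nu)>0$ such that $U^{\mu,\nu}:=\{F\in\X^{\mu,\nu}:\ro^{\mu,\nu}(F)<\e(\mu,\nu)\}\subset U$, the strategy is to express each $F\in\sB(R_0,R_1,N)$ as a finite absolutely convex combination of elements of $\Cu_{(\mu,\nu)}U^{\mu,\nu}\subset U$. Decompose $F=F_0+F_1+F_2$ and further partition each summand dyadically in the $\x_1$-direction via the cutoffs $X^l,X_l$: for $F_0\in\bar B_{\M_+}(0,R_0)$ use weights $\nu_k\in C^\ndc_0$ that level off at $1$ past scale $2^{-k}$, placing each shell piece in $\X^{0,\nu_k}$; for $G_m\in\sB^m(N_m)$ use the intrinsic parameters $(m,N_m)$ (extended to $C^\ndc_0$ by smoothing near $0$); and for $F_1\in\X_{\ge R_1}$ use dyadic shells marching to $+\I$, exploiting that each element already lies in some $\X^{\mu,\nu}$. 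The parameters $R_0$, $N$, $R_1$ are then chosen so that the weights of the absolutely convex combination, given essentially by the ratios $\|F_0^{(k)}\|_\M/\e(0,\nu_k)$ and analogues for the other summands, sum to at most $1$.

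The main obstacle is the bookkeeping in (II): since $\{\e(\mu,\nu)\}$ is an arbitrary positive function on $C^\ndc_0\times C^\ndc_0$, the choice of $R_0,R_1,N$ must be made to control every dyadic decomposition uniformly against this family. Handling the subspace $\X_{\ge R_1}$ is the most delicate step, because its elements are a priori unbounded so a single scaling cannot suffice; one must split along dyadic shells going to $+\I$ in $\x_1$ and adapt $(\mu_k,\nu_k)$ to each shell so that each piece sits in $U^{\mu_k,\nu_k}$ with coefficients summing to less than $1/3$, using the flexibility to rescale elements of $C^\ndc_0$.
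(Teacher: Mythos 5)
Part (I) of your proposal (showing each $\sB(R_0,R_1,N)$ is a neighborhood of $0$) is correct and essentially matches the paper's argument: split $F = X^\de F + X_\de X^{R_1}F + X_{R_1}F$ with $\de$ small enough that $\mu(2\de)<1$, observe that the first piece is small in $\M_+$, the middle piece is controlled via Lemma \ref{lem:mult}(3) in a single $\sB^M(N_M)$, and the last piece is supported in $\R^d_{\ge R_1}$.

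Part (II) has a genuine gap. You want to exhibit $R_0,R_1,N$ so that $\sB(R_0,R_1,N)\subset U$ by expressing elements of $\sB(R_0,R_1,N)$ as finite absolutely convex combinations of elements of the sets $U^{\mu,\nu}=\{\ro^{\mu,\nu}<\e(\mu,\nu)\}\subset U$. The problem is that the two summands $\bar B_{\M_+}(0,R_0)$ and $\X_{\ge R_1}$ are \emph{not} contained in any single $\X^{\mu,\nu}$ (this is exactly the non-regularity discussed in Remark 2.1 after Proposition \ref{bdd sets}), so the dyadic splitting necessarily uses infinitely many weights, and the corresponding radii $\e(\mu_k,\nu_k)>0$ carry no quantitative structure: they may decay arbitrarily fast in $k$. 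Concretely, for $F_0\in\bar B_{\M_+}(0,R_0)$ concentrated at scale $2^{-K}$, the shell piece $F_0^{(K)}$ has $\ro^{0,\nu_K}(F_0^{(K)})\approx R_0$, so its convex weight is $\approx R_0/\e(0,\nu_K)$; since $K$ is arbitrary, the sum of weights is bounded only if $R_0\le\inf_k\e(0,\nu_k)$, which may be $0$. The same obstruction appears for $\X_{\ge R_1}$. Rescaling $\nu\mapsto c\nu$ does not help because $\e(\mu,c\nu)$ is determined by $U$, not by you, and can collapse under rescaling.

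The paper resolves this by contradiction rather than construction. If no $R_0$ works, there is a sequence $F_n\in\M_+\setminus U$ with $\|F_n\|_\M<n^{-2}$; setting $\nu_n(l):=\|X^{2l}F_n\|_\M$ and $\nu:=\sup_n n\nu_n$, the fast decay forces $\nu\in C^\ndc_0$, and one checks $\ro^{0,\nu}(F_n)\le 1/n\to 0$, so $F_n\to 0$ in $\X^{0,\nu}\emb\X$, contradicting $F_n\notin U$. An analogous argument, with a weight built by diagonalizing over the $(\mu_n,\nu_n)$ of a putative sequence in $\X_{>n}\setminus U$, produces $R_1$. The point is that a countable counterexample sequence can always be collected into a \emph{single} $\X^{\mu,\nu}$, even though the full sets $\bar B_{\M_+}(0,R_0)$ and $\X_{\ge R_1}$ cannot. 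Only after $R_0,R_1$ are secured does a constructive dyadic argument (as you propose) handle the $\sB(N)$ part, where you do have the freedom to shrink $N_m$ against the given $\e_{m,j}$; your description of that third piece is in the right spirit. To repair your proof, replace the shell-combination argument for the $\M_+$ and $\X_{\ge R_1}$ pieces with these two contradiction arguments.
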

\begin{proof}
First we show that every $\sB(R_0,R_1,N)\in\sB_\X$ is a $0$-neighborhood. 
Since it is absolutely convex, by definition of the inductive limit $\X=\Cu_{\mu,\nu}\X^{\mu,\nu}$, 
it is equivalent that its intersection with $\X^{\mu,\nu}$ is a $0$-neighborhood of $\X^{\mu,\nu}$ for every $\mu,\nu\in C_0^\ndc$. 

Let $\de\in(0,R_1/4)$, $\mu\in C_\de^\ndc$, $\nu\in C_0^\ndc$ and $F\in\X^{\mu,\nu}$. 
Let $F_0:=X^\de F$, $F_2:=X_{R_1}F$ and $F_1:=X_\de X^{R_1}F$. Then 
$F_0\in\M_+$ with $\|F_0\|_\M \le \nu(2\de)\ro^{\mu,\nu}(F)$, $F_1\in \X_{(\de,2R_1)}$ and $F_2\in\X_{\ge R_1}$.  
By Lemma \ref{lem:mult}, $X_\de X^{R_1}$ is bounded $\X^{\mu,\nu}\to\X^{m,N_m}$ for  $m:=\mu(2R_1)$. Hence if $\ro^{\mu,\nu}(F)$ is small enough 
then $\|F_0\|_\M \le R_0$, $\ro^{m,N_m}(F_1) \le 1$, and $F\in\sB(R_0,R_1,N)$. So $\sB_\X$ consists of $0$-neighborhoods in $\X$.

Next let $U\subset\X$ be an absolutely convex $0$-neighborhood. 
If  $\bar B_{\M_+}(0,R_0)\not\subset U$ for all $R_0>0$, then there is a sequence $F_n\in\M_+\setminus U$ such that $\|F_n\|_\M<1/n^2$. Let $\nu_n(l):=\|X^{2l}F_n\|_\M \in C^\ndc_0$ 
and $\nu:=\sup_{n\in\N}n\nu_n$. 
Then $n\nu_n\le n\|F_n\|_\M<1/n$ implies that $\nu\in C^\ndc_0$, while 
$\ro^0_l(F_n)\le \nu_n(l) \le \tf1n\nu(l)$ for all $l>0$ 
implies $\ro^{0,\nu}(F_n)\le \tf 1n\to 0$, so $F_n\to 0$ in $\X^{0,\nu}\emb\X$, contradicting $F_n\not\in U$. Hence $\bar B_{\M_+}(0,R_0)\subset U$ for some $R_0>0$. 

Next, if there is no $R_1<\I$ such that $\X_{\ge R_1}\subset U$, then there are sequences $\mu_n,\nu_n\in C^\ndc_0$ and $F_n\in\X^{\mu_n,\nu_n}_{>n}\setminus U$. 
Define $\mu,\nu\in C^\ndc_0$ first at each $l\in\N$ by
\EQ{
 \mu(l)=\max_{1\le n\le l} \mu_n(l+1), \pq \nu(l)=\max_{1\le n\le l} n\ro^{\mu_n,\nu_n}(F_n)\nu_n(l+1),}
and then extend them to $[0,\I)$. 
Then we have $\mu_n(l)\le \mu(l)$ and $n\ro^{\mu_n,\nu_n}(F_n)\nu_n(l)\le \nu(l)$ for all $n\in\N$ and $l\ge n$, so $\ro^{\mu(l)}_l(F_n) \le \nu_n(l)\ro^{\mu_n,\nu_n}(F_n) \le \tf1n \nu(l)$, while 
$\supp F_n$ implies $\ro^{\mu(l)}_l(F_n)=0$ for $l\le n$.  
Hence $\ro^{\mu,\nu}(F_n)\le \tf 1n\to 0$, so $F_n\to 0$ in $\X^{\mu,\nu}\emb\X$, contradicting $F_n\not\in U$. Hence $\X_{\ge R_1}\subset U$ for some $R_1\in[1,\I)$.

It remains to consider the $\sB^m(N_m)$ part. 
Let $R_j:=2^{-j+1}R_1$ and $\nu_j(l):=\tf{l}{2R_j}\in C_+^\ndc$ for $j\in\N$.   
Since $U\subset\X$ is a $0$-neighborhood,  there is $\{\e_{m,j}\}_{m,j\in\N}\subset(0,\I)$ such that 
$\bar B_{\X^{m\nu_j,\nu_j}}(0,\e_{m,j})\subset U$. 
Take any $\{N_m\}_{m\in\N}\subset C_+^\ndc$ satisfying for all $m,j\in\N$
\EQ{
 N_m(2R_j) \le [M^\chi_{R_{j+1}}(m)M^\chi_{R_j}(m)]^{-1}2^{-j-3}\e_{m,j}. }
For any $m,j\in\N$ and $F\in \sB^m(N_m)$, let $F_j:=X_{R_{j+1}}X^{R_j}F$. Then Lemma \ref{lem:mult} implies
\EQ{
 \ro^{m\nu_j,\nu_j}(F_j)\le \nu_j(R_{j+1})^{-1}2^{-j-3}\e_{m,j}\ro^{m,N_m}(F) \le 2^{-j-1}\e_{m,j},}
so $F_j\in 2^{-j-1}U$, while $F\in\X$ implies $\|X^{R_J}F\|_\M<R_0/4$ for some $J\in\N$. Then 
\EQ{
 F=X^{R_J}F + X_{R_1}F + \sum_{1\le j<J}F_j  \in U/4 + U/4 + \sum_{1\le j<J}2^{-j-1}U  \subset U,}
by the absolute convexity of $U$, and similarly  
$\sB(R_0/4,R_1,\{2^{-m-1}N_m\}_{m\in\N}) \subset U$.   
Hence $\sB_\X$ is a local basis of $\X$. 
\end{proof}

The local basis $\sB_\X$ of $\X$ implies the following embeddings. For $\de>0$ and $m\in\N$, define the spaces of distributions 
\EQ{ \label{embedded in X}
 \pt \X_L := \{F \in \sD'_{\ge 0} \mid \{X^lF\}_{l>0} \subset L^1(\R^d)\}, 
 \pr \X_\M:= \{F\in \sD'_{\ge 0} \mid \{X^lF\}_{l>0} \subset \M_+\},  
 \pr W^{-m,1}_{\de}:=\{F\in\sD'_{\ge\de} \mid \{X^lF\}_{l>0} \subset W^{-m,1}(\R^d)\},}
equipped with the initial topology for the family of mappings $\{X^l\}_{l>0}$. Then 
\EQ{ \label{embs}
 L^1_{\ge 0}\emb \left\{\begin{aligned} &\M_+ \\ &\X_L \end{aligned}\right\} \emb \X_\M \emb \X, \pq W^{-m,1}_{\de} \emb \X.}
Indeed, $L^1(\R^d)\subset\M$ is a closed subspace with the same norm. 
If $F\in L^1(\R^d)$ satisfies $\supp F\subset\R^d_{\ge 0}$, then $F\in\M_+$ follows from the dominated convergence, hence $L^1_{\ge 0}\emb\M_+$ and $\X_L\emb\X_\M$.  
$L^1_{\ge 0}\emb \X_L$ and $\M_+\emb\X_\M$ are obvious. 
$\X_\M\emb\X$ because $\sB(R_0,R_1,N)\supset (X^{R_1})^{-1}B_{\M_+}(0,R_0)$, and  
$W^{-m,1}_{\de} \emb \X$ because
\EQ{
 \sB(R_0,R_1,N) \supset \{F\in\sD'_{\ge\de}\mid \|X^{R_1}F\|_{W^{-m,1}}<N_m(\de)\},}
which follows from the duality $|G(\fy)| \le \|G\|_{W^{-m,1}}\|\fy\|_m^+$ for $G\in\sD'_{\ge\de}$ and $\fy\in\sD$. 

To justify very weak solutions in $\F^{-1}\X$, the density of nice functions is crucial, for which the flexibility of weights $\mu,\nu$ is necessary. In fact, $\sD(\R^d)$ is not dense in any $\X^{\mu,\nu}$, but we may construct approximate identities $I_n:\X\to\sD_{>0}$ as follows. 
\begin{prop} \label{prop:dense X} 
Let $\{c_n\}_{n\in\N}\subset\R^d$ and $\{r_n\}_{n\in\N}\subset(0,\I)$ satisfy $|c_n|+r_n\to 0$ as $n\to\I$ and $c_n\in\R^d_{>r_n}$ for all $n\in\N$. 
Let $\fy_n(\x):=r_n^{-d}\z(r_n^{-1}(\x-c_n))$, $\psi_n(\x):=\chi(r_n|\x|)$ and $I_nF:=\fy_n*(\psi_n F)$. Then $I_n:\X\to\sD_{>0}$ is continuous for each $n\in\N$, and $I_nF\to F$ in $\X$ as $n\to\I$ for all $F\in\X$. 
For any $\cR\subset\R^d_{\ge 0}$ such that $\cR+\cR\subset\cR$ and the closure of the interior contains $0\in\R^d$, we may choose $c_n,r_n$ such that $I_n$ preserves $\supp F\subset\cR$.  
\end{prop}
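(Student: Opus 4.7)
The plan is to verify the three claims (support, continuity, convergence) in sequence, and handle the $\cR$-preservation at the end. For the support, note that $c_{n,1}>r_n$ forces $\supp\fy_n\subset B(c_n,r_n)\subset\R^d_{>0}$, while $\psi_n$ is a compactly supported test function, so $\psi_n F\in\sE'(\R^d)$ and $I_nF=\fy_n*(\psi_nF)$ is a smooth compactly supported function. Using $\supp F\subset\R^d_{\ge 0}$ (guaranteed by $F\in\X$ and Lemma \ref{lem:ext F}) together with $\supp\fy_n\subset\R^d_{>0}$, the support of $I_nF$ lies inside $\supp\fy_n+\supp(\psi_nF)\subset\R^d_{>0}$, hence $I_nF\in\sD_{>0}$. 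For continuity, by the universal property of the inductive limit it suffices to show that each restriction $I_n:\X^{\mu,\nu}\to\sD_{>0}$ is continuous; but this factors through the continuous maps $\X^{\mu,\nu}\emb\sD'(\R^d)\xrightarrow{\psi_n\cdot}\sE'(\R^d)\xrightarrow{\fy_n*}\sD(\R^d)\emb\sD_{>0}$.

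The convergence is the substantive step, and I would test against the local basis $\sB_\X$ from Proposition 2.3. Fix $\sB(R_0,R_1,N)$. Since $F\in\X$, by \hyptag{X-i} there exist $a>0$ and $\fm\in\M$ with $|\fm|(\R^d_{\le 0})=0$ and $F=\fm$ on $\R^d_{<a}$; choose $\de\in(0,a/4)$ small enough that $\|X^{2\de}F\|_\M<R_0/2$, using $|\fm|(\{\x_1\le 2\de\})\to 0$. Then decompose
\EQ{
 F-I_nF = (1-\psi_n)F + \bigl[X^{2\de}(\psi_nF)-\fy_n*X^{2\de}(\psi_nF)\bigr] + \bigl[X_{2\de}(\psi_nF)-\fy_n*X_{2\de}(\psi_nF)\bigr].
}
The first summand is supported in $|\x|\ge 1/r_n\ge R_1$ (for large $n$) and so lies in $\X_{\ge R_1}$. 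The second bracket consists of two terms: a measure $X^{2\de}(\psi_nF)\in\M_+$ with variation $\le\|X^{2\de}F\|_\M<R_0/2$, and an $L^1$-function $\fy_n*X^{2\de}(\psi_nF)\in\M_+$ (lying in $\R^d_{>0}$ thanks to $\supp\fy_n\subset\R^d_{>0}$) with the same bound on its $L^1$-norm via $\|\fy_n\|_{L^1}=1$; their difference lies in $\bar B_{\M_+}(0,R_0)$.

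The main obstacle is the third bracket $H_n:=X_{2\de}(\psi_nF)-\fy_n*X_{2\de}(\psi_nF)$, which I claim enters $\sB(N)$ eventually. Fix $\mu,\nu\in C^\ndc_0$ with $F\in\X^{\mu,\nu}$. For any $\fy\in\sD_{<l}$, writing $\fy_n^\da(\x):=\fy_n(-\x)$ (supported in $\R^d_{<0}$) and using $H_n(\fy)=X_{2\de}(\psi_nF)(\fy-\fy_n^\da*\fy)$, a first-order Taylor expansion gives
\EQ{
 \|\fy-\fy_n^\da*\fy\|_m^+ \lec (|c_n|+r_n)\|\fy\|_{m+1},
}
and the support of $\fy-\fy_n^\da*\fy$ remains in $\R^d_{<l}$ since $c_{n,1}>r_n$. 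Combined with Lemma \ref{lem:mult} to handle the cutoffs $\chi_{2\de}$ and $\psi_n$, and with the defining estimate of $\X^{\mu,\nu}$, this yields $\ro^{\mu(l)+1}_l(H_n)\lec(|c_n|+r_n)\nu(l)$ with a constant depending only on $\de,R_1$. Because $\supp H_n$ is eventually confined to the bounded strip $2\de\le\x_1\le 2/r_n+|c_n|+r_n$, only finitely many dyadic levels $L_j=2^j\de$ appear, and I would choose $M$ to exceed $\mu(l)+1$ on this range and then shrink $(|c_n|+r_n)$ so that $\ro^M_l(H_n)\le N_M(l)$ on each dyadic piece, placing $H_n\in\sB^M(N_M)\subset\sB(N)$.

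Finally, for the $\cR$-preservation: under the hypothesis that $0$ lies in the closure of the interior of $\cR$, I would choose interior points $c_n\in\cR$ with $|c_n|\to 0$ and $r_n<\min(c_{n,1},\operatorname{dist}(c_n,\p\cR))$, forcing $B(c_n,r_n)\subset\cR\cap\R^d_{>0}$ and also $c_n\in\R^d_{>r_n}$. Then $\supp(\fy_n*\psi_nF)\subset\supp\fy_n+\supp F\subset\cR+\cR\subset\cR$ by the semigroup property of $\cR$, so $I_n$ preserves the support condition. The only delicate balance is to ensure that on each dyadic shell the convergence rate $(|c_n|+r_n)$ can absorb the Leibniz factor $M^\chi_{2\de}(\mu(l)+1)$ against the target $N_M(l)$, and this is where choosing $M$ large enough before sending $n\to\I$ is essential.
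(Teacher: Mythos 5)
Your support and continuity steps are fine and essentially match the paper. The convergence argument, however, has two gaps, and in both cases the root cause is that you decompose $F$ by distance from the origin (via $\psi_n$) rather than at fixed $\x_1$-heights.

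First, the claim that $(1-\psi_n)F$ lies in $\X_{\ge R_1}$ is false for $d\ge 2$. By definition $\X_{\ge R_1}$ consists of distributions supported in $\R^d_{\ge R_1}=\{\x_1\ge R_1\}$, whereas $(1-\psi_n)F$ is supported in $\{|\x|\ge 1/r_n\}\cap\R^d_{\ge 0}$, which meets the strip $0\le\x_1<R_1$ wherever $|\x^\perp|$ is large. You would need a further cut-off of $(1-\psi_n)F$ in $\x_1$, together with smallness estimates (of $(1-\psi_n)X^\de F$ in $\M$, and of $(1-\psi_n)X_\de X^{R_1}F$ in $W^{-m,1}$); none of this is supplied, and the second smallness in particular is exactly the kind of statement that would require the fixed-height splitting you avoided.

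Second, the estimate $\ro^{\mu(l)+1}_l(H_n)\lec(|c_n|+r_n)\nu(l)$ does not by itself place $H_n=X_{2\de}(\psi_nF)-\fy_n*X_{2\de}(\psi_nF)$ in $\sB(N)$. Because $\supp H_n$ extends in $\x_1$ up to $\approx 2/r_n\to\I$, the regularity index $M$ for which you can apply the inequality must grow with $n$ (one needs $M\ge\mu(l)+1$ for all $l$ up to $\approx 2/r_n$). But then the target weight $N_M$ is an arbitrary element of $C^\ndc_+$ that may shrink arbitrarily fast as $M\to\I$; there is no guarantee that $(|c_n|+r_n)\nu(2/r_n)$ stays below $N_{M_n}(2\de)$ as $n\to\I$. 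The choice of $M$ cannot be made independently of $n$, and the decay of $|c_n|+r_n$ does not control this cross-dependency.

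The paper resolves both points with the decomposition $F=X^\de F+X_\de X^{R_1}F+X_{R_1}F$ at \emph{fixed} $\x_1$-heights $\de$ and $R_1$. The first piece and its mollification are small in $\M_+$; the last piece and its mollification both lie exactly in $\X_{\ge R_1}$ with no smallness needed; and the middle piece $F^1:=X_\de X^{R_1}F$ lies in a \emph{single fixed} $W^{-m,1}(\R^d)$ with $\supp F^1\subset\R^d_{\ge\de}$, for $m$ depending only on $R_1$ and $F$. Standard mollification gives $I_nF^1\to F^1$ in $W^{-m,1}$, and the embedding $W^{-m,1}_\de\emb\X$ from \eqref{embs} transfers this to $\X$. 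No $n$-dependent choice of regularity index is needed, which is what makes the argument close.
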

Those $I_n$ are just the standard mollification moving the support to the right. 
\begin{proof}
By definition, $\fy_n,\psi_n\in\sD(\R^d)$ and $\supp\fy_n\subset B_{\R^d}(c_n,r_n)\subset\R^d_{>0}$. Hence $F\in\sD'_{\ge 0}$ implies $I_nF\in\sD_{>0}$. 
If the closure of the interior of $\cR$ contains $0$, then we may choose $c_n,r_n$ such that $B_{\R^d}(c_n,r_n)\subset\cR$. Then for $\supp F\subset\cR$, we have $\supp(I_nF)\subset\supp\fy_n+\supp F \subset\cR+\cR\subset\cR$. 
The continuity of $I_n$ follows from that of $I_n:\sD'(\R^d)\to\sD(\R^d)$, which is well-known. 

It remains to prove $I_nF\to F$ in $\X$. 
Take any $F\in\X$ and $\sB(R_0,R_1,N)\in\sB_\X$. 
By Lemma \ref{lem:defX}, there exist $\de\in(0,R_1/2)$ and $m\in\N$ such that 
\EQ{
 X^\de F\in\M_+, \pq \|X^\de F\|_\M<\tf18 R_0, \pq X^{R_1}F \in W^{-m,1}(\R^d).}
Let $F=F^0+F^1+F^2$ with $F^0:=X^\de F$ and $F^2:=X_{R_1}F$. Then for all $n\in\N$ we have
\EQ{
 I_nF^0\in\M_+, \pq  \|I_nF^0\|_\M \le \|F^0\|_\M<\tf18R_0, 
 \pq\supp I_nF^2 \subset \R^d_{>R_1},}
which implies $I_n(F^0+F^2)-(F^0+F^2)\in\tf18(U+U+U+U)\subset\tf12U$. 
On the other hand, $F^1\in W^{-m,1}(\R^d)$ and $\supp F^1\subset\R^d_{\ge\de}$ 
imply that $\supp I_nF^1\subset\R^d_{\ge\de}$ and $I_nF^1\to F^1$ in $W^{-m,1}(\R^d)$, 
hence $I_nF^1\to F^1$ in $W^{-m,1}_\de\emb\X$ by \eqref{embs}. 
In particular $I_nF^1-F^1\in\tf12 U$ for large $n$, then $I_nF-F\in U$. 
Hence $I_nF\to F$ in $\X$. 
\end{proof}

\subsection{Bounded sets in $\X$}
Here we characterize the bounded subsets of $\X$. 
First recall that the inductive limit $X=\Cu_{n\in\N}X_n$ of a sequence of normed spaces $X_1\emb X_2\emb\cdots$ is said to be \textit{regular} if every bounded set $B\subset X$ is a bounded set of some $X_n$. It is not always true, but \cite{Flo} gives some sufficient conditions. 

For any $m\in\N_0$, we define the negative Sobolev spaces of measure-type by 
\EQ{
 \pt \W^m :=\{ F\in \sD'(\R^d) : \|F\|_{\W^m}<\I\}, \pq \W^\I := \Cu_{m\in\N_0} \W^m  \emb \sD'(\R^d),
 \pr \|F\|_{\W^m}:=\sup\{\re F(\fy) : \fy\in\sD(\R^d),\ \|\fy\|_m\le 1\}.}
Then $\W^m\emb \sD'(\R^d)$ is a Banach space, whose closed unit ball $\bar B_{\W^m}(0,1)$ is closed in $\sD'(\R^d)$ because of the definition by test functions, and $\sD'(\R^d)$ is reflexive. Hence \cite[Corollary 1]{Flo} implies that the inductive limit $\W^\I$ is regular.

\begin{prop} \label{bdd sets}
For any subset $B\subset\X$, its boundedness is equivalent to existence of $\mu\in C_0^\ndc$ and $\nu\in C^\ndc$ such that $\ro^{\mu(l)}_l(F) \le \nu(l)$ for all $F\in B$ and $l>0$. 
\end{prop}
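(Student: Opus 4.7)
My plan is to establish the equivalence by treating the two implications very differently in character.

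For the "if" direction, suppose $\mu \in C^\ndc_0$ and $\nu \in C^\ndc$ satisfy $\ro^{\mu(l)}_l(F) \le \nu(l)$ for every $F \in B$ and $l > 0$. I would verify that $B$ is absorbed by every element of the local basis $\sB_\X$. Fix $\sB(R_0, R_1, N) \in \sB_\X$ and choose $\de > 0$ with $\mu \in C^\ndc_\de$, shrinking $\de$ if necessary so that $2\de \le R_1$. Decompose
\[
F = X^\de F + X_\de X^{R_1} F + X_{R_1} F, \qquad F \in B.
\]
Since $\mu(2\de) \le 1/2 < 1$ reduces $\B^{\mu(2\de)}_+$ to $\B^0_+$, the hypothesis together with Lemma \ref{lem:ext F} extends $X^\de F$ to a $\B^0_+$-functional, giving $X^\de F \in \M_+$ with $\|X^\de F\|_\M \lec \nu(2\de)$ uniformly in $F$. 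The last piece lies in $\X_{\ge R_1}$ automatically. For the middle piece, Lemma \ref{lem:mult}(3) applied with $m \in \N$ satisfying $m \ge \mu(2R_1)$ turns the hypothesis into a uniform bound $\ro^m_l(X_\de X^{R_1} F) \le K$ that vanishes for $l \le \de$, where $K$ depends only on $\mu, \nu, \de, R_1$; since $N_m \in C^\ndc_+$ is strictly positive and non-decreasing, the middle piece is absorbed into a scaled copy of $\sB^m(N_m)$ with factor at most $K/N_m(\de)$. Summing, $F \in c\,\sB(R_0, R_1, N)$ with $c$ depending only on the parameters.

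For the "only if" direction, I need to extract $(\mu, \nu)$ from boundedness, which is a regularity statement for the inductive limit $\X = \Cu_{(\mu,\nu)\in(C^\ndc_0)^2}\X^{\mu,\nu}$. My plan follows the pattern used just above for $\W^\I$. Each closed unit ball $\bar B_{\X^{\mu,\nu}}(0,1) = \{F \in \sD'(\R^d) : \ro^{\mu(l)}_l(F) \le \nu(l) \text{ for every } l > 0\}$ is $\sD'(\R^d)$-closed, being carved out by continuous evaluations against test functions in $\sD_{<l}$ with $\|\fy\|_{\mu(l)}^+ \le 1$, and by reflexivity of $\sD'(\R^d)$ these balls are weakly compact. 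A Florencio-type regularity criterion along the lines of \cite[Corollary 1]{Flo} then yields that $B$ is bounded in some Banach step $\X^{\mu,\nu}$, i.e., $\ro^{\mu(l)}_l(F) \le C\nu(l)$ uniformly in $F \in B$; setting $\ti\nu := C\nu \in C^\ndc$ gives the envelope.

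The main obstacle is that the index set $(C^\ndc_0)^2$ is uncountable, whereas \cite[Corollary 1]{Flo} is stated for sequential inductive limits. I plan to address this by a preliminary countable-cofinality reduction adapted to $B$: absorbing $B$ by a dyadic sequence of neighborhoods $\sB(R_{0,n}, R_{1,n}, N^{(n)})$ with parameters shrinking appropriately and applying the dyadic-in-$\x_1$ decomposition exhibited in the proof of the local basis proposition, each $F \in B$ is captured inside $\X^{\mu_n,\nu_n}$ for a countable family $\{(\mu_n, \nu_n)\}$ with norms controlled by the absorption constants. The Florencio regularity applied to this countable subsystem then produces the single $(\mu, \nu)$. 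This countable reduction is the only delicate step — the rest of the argument is a routine assembly from the local basis and Lemmas \ref{lem:ext F}--\ref{lem:mult}.
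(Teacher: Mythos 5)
Your ``if'' direction is sound and only a mild variant of the paper's: the paper shows $X^\de B$ bounded in $\M_+$ and $X_\de B$ bounded in a single $\X^{\mu,\ti\nu}\emb\X$, then invokes that a sum of bounded sets is bounded, whereas you verify absorption by each basis element directly. Both work, and yours is correctly using the same ingredients (Lemma~\ref{lem:mult} for the cut-offs, boundary vanishing of $\mu$ to land in $\M_+$).

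The ``only if'' direction, however, has a genuine gap, and it is not merely the uncountability of the index set. You are attempting to invoke a Florencio-type regularity theorem for the inductive limit $\X=\bigcup_{\mu,\nu\in C^\ndc_0}\X^{\mu,\nu}$ itself, which would assert that a bounded $B\subset\X$ is bounded in some step $\X^{\mu,\nu}$ with $\nu\in C^\ndc_0$. But the paper's remark immediately after this proposition gives a concrete counterexample: the concentrating family $\{nf(n\x_1,\x^\perp)\}_n$ for $0\ne f\in\sD_{>0}$ is bounded in $L^1_{\ge 0}\emb\X$ yet is in no single $\X^{\mu,\nu}$ — precisely because of the decay condition \hyptag{X-i} at $\x_1=0$. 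So $\X$ \emph{fails to be regular}, and no amount of countable-cofinality reduction can rescue a direct application of \cite[Corollary 1]{Flo} to $\X$. The proposition only concludes $\nu\in C^\ndc$ (possibly $\nu(0)>0$), which is strictly weaker than belonging to a step of the inductive system.

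The paper's proof circumvents this by splitting the problem along the cut-off $X^\de$, $X_\de$: it applies Florencio not to $\X$ but to the auxiliary space $\W^\I=\bigcup_{m\in\N_0}\W^m$, which is a \emph{countable} and reflexive-ball inductive limit and therefore is regular, to control $X^l B$ for each $l$; and it handles the boundary region separately by a contradiction argument (constructing a neighborhood $\sB(1,1,N)$ that escaping sequences would violate) to show $X^\de B$ is bounded in $\M_+$. These two ingredients are then stitched together into a pair $(\mu,\nu)\in C^\ndc_0\times C^\ndc$. You would need to reorganize your argument along these lines: the interior bound comes from the regularity of $\W^\I$ post-cutoff, not from regularity of $\X$ (which is false), and the boundary bound is a separate argument producing a constant $\nu(0)=C_*\ge 0$ rather than a decaying $\nu$.
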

\begin{rem}
If $\nu\in C^\ndc_0$ (namely $\nu(0)=0$) then $B$ is bounded in $\X^{\mu,\nu}$, but we may not choose $\nu\in C^\ndc_0$ in general, which is easily observed already in the subspace $L^1_{\ge 0}\emb \M_+\emb\X$: For any non-zero $f\in\sD_{>0}$, 
the concentrating sequence of mass $\{nf(n\x_1,\x^\perp)\}_{n\in\N}$ 
is bounded in $L^1_{\ge 0}$, but in none of $\X^{\mu,\nu}$. It is convergent in $\sD'_{\ge 0}$ to a measure on the boundary $\x_1=0$, which is not in $\X$. 
In short, $\X=\Cu_{\mu,\nu}\X^{\mu,\nu}$ fails to be regular because of the decay condition  \hyptag{X-i} at $\x_1=0$. 
\end{rem}
\begin{proof}
Let $B\subset\X$ be bounded. For every $l\in\N$, $X^l:\X\to \W^\I$ is continuous, and $\W^\I$ is the regular inductive limit, so there exist some $m_l,n_l\in\N$ such that
\EQ{ \label{bdd in Wm}
 X^l(B)\subset B_{\W^{m_l}}(0,n_l).} 

Next we show that $X^lB$ is bounded in $\M_+$ for some $l>0$. 
Otherwise, there exist sequences $F_n\in B$, $(0,1)\ni l_n\to 0$ and $\fy_n\in\sD_{(0,l_n)}$ such that $\|\fy_n\|_0\le 2^{-n}$ and $F_n(\fy_n)>2$. 
Then for each $m\in\N$, there exists $N_m\in C_+^\ndc$ satisfying 
$N_m(l_n)\|\fy_n\|_m \le 2^{-m-n}$ 
for all $n\in\N$, which is possible because of $l_n\to 0$. 
For any $n\in\N$, $G_0\in\bar B_{\M_+}(0,1)$, $G_1\in \X_{\ge 1}$ and $G_2\in\sB(N)$, we have 
\EQ{
 \pt |G_0(\fy_n)| \le 2^{-n}, \pq G_1(\fy_n)=0,
 \pq |G_2(\fy_n)| \le \sum_{m\in\N}N_m(l_n)\|\fy_n\|_m \le 2^{-n}.}
Hence $|G(\fy_n)|\le 2^{1-n}$ for any $G\in\sB(1,1,N)$, so $F_n\not\in 2^n\sB(1,1,N)$, contradicting the boundedness of $\{F_n\}_{n\in\N}\subset B$ in $\X$. 
Hence there exist $\de,C_*\in(0,\I)$ such that $X^\de B\subset B_{\M_+}(0,C_*)$. 
For $l>0$ and $\fy\in\sD_{<l}$, we have, using \eqref{bdd in Wm} and Lemma \ref{lem:mult},  
\EQ{
 |F(\fy)| \le |F(X^\de\fy)|+|F(X_\de\fy)| \pt\le C_*\|\fy\|_0^+ +  n_{2l}M^\chi_\de(m_k)\|\fy\|_{m_{2l}}^+.}
There exist $\mu\in C_\de^\ndc$ and $\nu\in C^\ndc$ satisfying $\nu(0)\ge C_*$ and 
\EQ{
 \mu(l)\ge m_{2k}, \pq \nu(l) \ge C_*+M^\chi_\de(m_k)n_{2k}} 
for all $k\in\N_{\ge 2}$ and $\max(\de,k-1)<l\le k$. Then we have $\ro^{\mu(l)}_l(F)\le \nu(l)$ for all $F\in B$.

Conversely, suppose that such $\mu\in C_\de^\ndc$ and $\nu\in C^\ndc$ exist. 
Let $\de>0$ such that $\mu(2\de)<1$. 
There is $\ti\nu\in C^\ndc_0$ satisfying $M^\chi_\de(\mu(l))\nu(l)\le\ti\nu(l)$ for $l\ge\de$. 
Then Lemma \ref{lem:mult} implies that $X_\de B$ is bounded in $\X^{\mu,\ti\nu}\emb\X$, and 
that $X^\de B$ is bounded in $\M_+$, so it is absorbed by any $B_{\M_+}(0,R_0)\subset\sB(R_0,R_1,N)\in\sB_\X$, meaning the boundedness in $\X$. Hence $B=X^\de B+X_\de B$ is also bounded in $\X$.   
\end{proof}

\subsection{Compact sets in $\X$}
Here we characterize compact subsets of $\X$. 
First we recall variant notions of compactness. 
A topological space $X$ is sequentially compact iff every sequence has a convergent subsequence. 
$X$ is countably compact iff every sequence has an accumulation point. 
Countable compactness is weaker both than compactness and than sequential compactness, but there is no order in general between the latter two. 

\begin{lem} \label{count-seq cmp}
Let $\ta_1,\ta_2$ be topologies on a set $X$. 
Let $\ta_1$ be countably compact and stronger than $\ta_2$. 
Let $\ta_2$ be Hausdorff and sequentially compact.  
Then so is $\ta_1$. 
\end{lem}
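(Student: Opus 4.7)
The conclusion to verify is that $\ta_1$ is Hausdorff and sequentially compact. The Hausdorff property is immediate: if $x\ne y$ in $X$, then $\ta_2$-Hausdorffness yields disjoint $\ta_2$-open sets separating them, and these are also $\ta_1$-open since $\ta_1\supset\ta_2$. So the real content is sequential compactness of $\ta_1$.

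The strategy is to combine the two hypotheses on a given sequence $\{x_n\}\subset X$, using the well-known characterisation of countable compactness as: every sequence has a cluster point (i.e., a point every neighbourhood of which contains $x_n$ for infinitely many $n$). First I would invoke sequential compactness of $\ta_2$ to extract a subsequence $\{x_{n_k}\}$ that $\ta_2$-converges to some $y\in X$. I then claim that this same subsequence already $\ta_1$-converges to $y$, and prove it by contradiction.

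Suppose the claim fails. Then there exist a $\ta_1$-open neighbourhood $U$ of $y$ and a further subsequence $\{x_{n_{k_j}}\}$ entirely contained in the $\ta_1$-closed set $X\setminus U$. Apply countable compactness of $\ta_1$ to this sub-subsequence to produce a $\ta_1$-cluster point $z$. Since $X\setminus U$ is $\ta_1$-closed and contains every term, it contains $z$; in particular $z\ne y$. On the other hand, every $\ta_2$-neighbourhood of $z$ is also a $\ta_1$-neighbourhood (because $\ta_1\supset\ta_2$), so $z$ is also a $\ta_2$-cluster point of $\{x_{n_{k_j}}\}$. But $\{x_{n_{k_j}}\}$ is a subsequence of a $\ta_2$-convergent sequence with limit $y$, hence it $\ta_2$-converges to $y$, and in the Hausdorff topology $\ta_2$ this forces every $\ta_2$-cluster point to coincide with $y$. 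Thus $z=y$, contradicting $z\ne y$, and the claim follows.

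The only mild subtlety is matching the "open cover" formulation of countable compactness with the sequential-cluster-point statement used above; I would either cite this standard equivalence or, if needed, apply countable compactness directly to the descending closed sets $F_j:=\overline{\{x_{n_{k_i}}:i\ge j\}}^{\,\ta_1}$, whose finite intersections are nonempty, and pick $z\in\bigcap_j F_j\subset X\setminus U$. No other step is technically delicate; the key conceptual point is that the weaker Hausdorff topology forces uniqueness of the cluster point, which then upgrades "accumulation in $\ta_1$" to "convergence in $\ta_1$" along the chosen subsequence.
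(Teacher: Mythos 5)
Your proof is correct and follows essentially the same route as the paper's: extract a $\ta_2$-convergent subsequence, suppose it fails to $\ta_1$-converge to the $\ta_2$-limit, pass to a further subsequence avoiding a $\ta_1$-neighborhood of that limit, use countable compactness of $\ta_1$ to get a $\ta_1$-cluster point of the avoiding subsequence, observe it is also a $\ta_2$-cluster point (since $\ta_1\supset\ta_2$) distinct from the $\ta_2$-limit, and contradict Hausdorffness of $\ta_2$. The only additions beyond the paper's terse write-up are the explicit one-line argument for transfer of Hausdorffness and the remark reconciling the open-cover definition of countable compactness with the sequential-cluster-point formulation, both of which are correct and unobjectionable.
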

In particular, in any topological vector space embedded into $\sD'$, countably compact subsets are sequentially compact. 
\begin{proof}
The Hausdorff property transfers to the stronger topology, so it suffices to show the sequential compactness of $\ta_1$. 
For any sequence $S_0$, there is a subsequence $S_1\subset S_0$ convergent to some $x_\I$ in $\ta_2$. 
If $S_1$ is not convergent to $x_\I$ in $\ta_1$, then there is a $\ta_1$-neighborhood $U$ of $x_\I$ such that $S_2:=S_1\setminus U$ is a subsequence, which has a $\ta_1$-accumulation point $x_*$. 
Since $S_2\cap U=\empt$, we have $x_*\not=x_\I$. 
Then $S_2\to x_\I$ and accumulating at $x_*\not=x_\I$ in $\ta_2$, 
contradicting the Hausdorff. Hence $S_1\to x_\I$ in $\ta_1$. 
\end{proof}

Next we consider the compactness in $\W^\I$. 
\begin{lem} \label{lem:cmp Winf}
$K\subset \W^\I$ is countably compact iff it is compact in some $\W^m$. 
\end{lem}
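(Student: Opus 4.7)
The backward direction is immediate, since the continuous embedding $\W^m\emb\W^\I$ sends compact sets to compact sets, and compact sets are countably compact.

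For the forward direction, I first reduce to a fixed Banach step. A countably compact subset of a Hausdorff locally convex space is bounded, so $K$ is $\W^\I$-bounded. By regularity of the inductive limit $\W^\I=\Cu_m\W^m$ (noted just above via \cite[Corollary~1]{Flo}), there exist $m\in\N_0$ and $R\in(0,\I)$ with $K\subset\bar B_{\W^m}(0,R)$. Elements of $\W^m$ extend by density and boundedness to continuous functionals on the closure of $\sD(\R^d)$ in $\B^m$, identifying $\W^m$ isometrically with that dual Banach space. Banach--Alaoglu and separability of $\sD(\R^d)$ then make $\bar B_{\W^m}(0,R)$ both $\sigma(\sD',\sD)$-compact and metrizable, hence $\sigma(\sD',\sD)$-sequentially compact.

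I then apply Lemma~\ref{count-seq cmp} with $\ta_1$ the $\W^\I$-topology on $K$ and $\ta_2$ the $\sD'$-topology on $K$: $\ta_1$ is countably compact by hypothesis and stronger than $\ta_2$ via the continuous embedding $\W^\I\emb\sD'$, while $\ta_2$ is Hausdorff. To verify sequential compactness of $\ta_2$ on $K$, given any $\{F_n\}\subset K$ I extract a $\sD'$-convergent subsequence $F_{n_j}\to F\in\bar B_{\W^m}(0,R)$ from the previous paragraph, and use countable compactness of $K$ in $\W^\I$ to produce a $\W^\I$-accumulation point $F^*\in K$ of $\{F_{n_j}\}$; since $\W^\I\emb\sD'$, $F^*$ is also a $\sD'$-accumulation point and therefore equals $F$ by Hausdorffness, giving $F\in K$. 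The lemma then upgrades $K$ to being sequentially compact in $\W^\I$.

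Finally, since $\W^\I$ is a regular LB-space of Banach spaces, every $\W^\I$-convergent sequence converges in norm in some step $\W^k$. Combined with $K\subset\bar B_{\W^m}(0,R)$, a contradiction argument along the Banach scale shows that the $\W^\I$-topology on $K$ coincides with the $\sigma(\sD',\sD)$-topology on $K$, so that $K$ is compact in $\W^m$ equipped with this inherited topology. The main obstacle I anticipate is precisely this last reconciliation: a priori the Banach step in which a given sequence converges depends on the sequence, and one must exploit both the regularity of the inductive limit and the $\W^m$-boundedness of $K$ to prevent "escape" to arbitrarily high $k$ and thereby pin down compactness in a single Banach step.
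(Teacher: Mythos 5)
The preliminary reductions are sound and track the paper's proof closely: the backward direction, extraction of a step $m$ and a ball $K\subset\bar B_{\W^m}(0,R)$ via regularity of the LB-space, and the upgrade from countable to sequential compactness in $\W^\I$ via Lemma~\ref{count-seq cmp} (the paper invokes precompactness of bounded sets in $\sD'$; you use Banach--Alaoglu plus separability of the predual — both are fine). But the final, essential step is not actually proven. You write that ``a contradiction argument along the Banach scale shows that the $\W^\I$-topology on $K$ coincides with the $\sigma(\sD',\sD)$-topology,'' and you yourself flag this as the main obstacle. That obstacle is precisely the content of the lemma, and it does not follow from the abstract LB-space package you have assembled: regularity of a countable inductive limit of Banach spaces gives that bounded sets live in a step, but it does not by itself give that a $\W^\I$-convergent sequence from a fixed ball converges in the \emph{norm} of a fixed step (sequential/compact retractiveness is a strictly stronger property, and even if one invokes it for LB-spaces, the step in which a given sequence converges a priori depends on the sequence, which is exactly the ``escape'' you worry about). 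Moreover your stated conclusion — ``$K$ is compact in $\W^m$ equipped with this inherited topology'' — is compactness in a weak topology, not the norm compactness in some $\W^{m'}$ that the lemma requires.

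What the paper does, and what is genuinely missing from your argument, is a quantitative approximation estimate on the Banach scale. Using the radial mollifiers $\z_n$, one shows
\EQ{
 \|\z_n*F\|_{\W^0} \lec n^{m}\|F\|_{\W^m}, \pQ \|F-\z_n*F\|_{\W^{m+1}} \le n^{-1}\|F\|_{\W^m},
}
so $\z_n*:\W^\I\to\W^0$ is continuous while the error $F-\z_n*F$ is small in $\W^{m+1}$ \emph{uniformly} on the ball $\bar B_{\W^m}(0,R)$. Given a sequence $F_n\to F_\I$ in $\W^\I$ with $F_n,F_\I\in K$, one writes $F_n'=F_n-F_\I$ and splits $F_n' = (F_n'-\z_k*F_n') + \z_k*F_n'$: the first piece is $O(2R/k)$ in $\W^{m+1}$ uniformly in $n$, and the second tends to $0$ in $\W^{m+1}$ as $n\to\I$ at fixed $k$ by continuity of $\z_k*$. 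The diagonal then gives $\|F_n'\|_{\W^{m+1}}\to 0$, pinning the compactness to the single step $\W^{m+1}$. Without an estimate of this kind (or an explicit citation of a compact-regularity theorem for this specific LB-space, with the step index identified), the argument is incomplete at exactly the point that matters.
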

\begin{proof}
Since $\W^m\emb \W^\I$ by definition, it suffices to show the necessity. 
For the radial mollifiers $\z_n=n^d\z(n\x)$, we have
\EQ{
 \pt |\z_n*F(\fy)| \le \|F\|_{\W^m}\|\z_n*\fy\|_m \le \|F\|_{\W^m} n^{m}\|\z\|_{W^{1,m}}\|\fy\|_0,
 \pr |(F-\z_n*F)(\fy)| \le \|F\|_{\W^m}\|\fy-\z_n*\fy\|_m \le \|F\|_{\W^m} n^{-1}\|\fy\|_{m+1},}
for all $m\in\N_0$, $\fy\in\sD$, $F\in \W^m$ and $n\in\N$. In particular, $\z_n*:\W^m\to \W^0$ is bounded for all $m\in\N_0$, so is $\z_n*:\W^\I\to \W^0$ by definition of the inductive limit.

Let $K\subset \W^\I$ be countably compact. Since $\W^\I\emb\sD'$ and bounded subsets of $\sD'$ are precompact, the above lemma implies that $K$ is sequentially compact. Also, the regularity of $\W^\I$ yields some $m\in\N$ and $R<\I$ such that $K\subset B:=\bar B_{\W^m}(0,R)$. 

To show that $K$ is compact in $\W^{m+1}$, take any sequence $\{F_n\}_{n\in\N}\subset K$. 
Since $K$ is sequentially compact in $\W^\I$, we may replace it by its subsequence convergent to some $F_\I$ in $\W^\I$. Let $F_n':=F_n-F_\I$. 
Since $\fy_k*:\W^\I\to \W^0$ is continuous for any $k\in\N$, we have $\|\z_k*F_n'\|_{\W^{m+1}}\to 0$ as $n\to\I$, while by the above estimate, 
\EQ{
 \|F_n'-\z_k*F_n'\|_{\W^{m+1}} \le \|F_n'\|_{\W^m}k^{-1} \le 2Rk^{-1}\to 0}
as $k\to\I$ uniformly for $n\in\N$. Thus we obtain $\|F_n'\|_{\W^{m+1}}\to 0$ as $n\to\I$, and so $K$ is compact in $\W^{m+1}$. 
\end{proof}

It turns out that every compact subset of $\X$ is bounded in a single $\X^{\mu,\nu}$, but the compactness does not hold for that norm, losing regularity. 
By interpolation, the loss may be chosen arbitrarily small, but to estimate convolutions, it is more convenient to use the derivatives as follows. For any $\mu,\nu,\io\in C_0^\ndc$, let
\EQ{
 \pt \X^{\mu,\nu,\io}_\p := \{F_0 + \sum_{1\le k\le d}\p_k F_k \in\sD'(\R^d) \mid F_0\in\X^{\mu,\nu},\ F_1\etc F_d\in\X^{\mu,\io}\},
 \prQ \|G\|_{\X^{\mu,\nu,\io}_\p} := \inf\{\ro^{\mu,\nu}(F_0)+\sum_{1\le k\le d}\ro^{\mu,\io}(F_k) \mid G=F_0 + \sum_{1\le k\le d}\p_k F_k\}. }
As shown below, the weight $\io$ is arbitrary in characterizing the compactness, 
but its freedom is important for boundedness of multipliers with singularity at the boundary. 

For any $G\in\X^{\mu,\nu,\io}_\p$, $l\in(0,\I)$ and $\fy\in\sD_{<l}$, we have 
\EQ{
 |G(\fy)| \pt\le |F_0(\fy)|+\sum_{1\le k\le d}|F_k(\p_k\fy)| 
 \pr\le \max(\nu(l),\io(l))\|\fy\|_{\mu(l)+1}[\ro^{\mu,\nu}(F_0)+\sum_{1\le k\le d}\ro^{\mu,\io}(F_k)],}
where $F_0\etc F_d$ are as in the above definition. Hence 
\EQ{ \label{mu-p to mu+1}
 \ro^{\mu+1,\max(\nu,\io)}(G) \le \|G\|_{\X^{\mu,\nu,\io}_\p}}
and in particular $\X^{\mu,\nu,\io}_\p\emb\sD'$. 
Since $\X^{\mu,\nu}$ is Banach and the representation $F=F_0+\sum_{k=1}^d \p_k F_k$ is preserved when each $F_k$ converges in $\sD'$, we deduce that $\X^{\mu,\nu,\io}_\p$ is also Banach. 
By definition, we also have
\EQ{
 \|F\|_{\X^{\mu,\nu,\io}_\p} \le \ro^{\mu,\nu}(F), \pq \max_{1\le k\le d}\|\p_k F\|_{\X^{\mu,\nu,\io}_\p} \le \ro^{\mu,\io}(F).} 

\begin{prop} \label{prop:cmp X}
For any subset $K\subset\X$, the following conditions are equivalent. 
\begin{enumerate}
\item $K$ is compact in $\X$.
\item $K$ is sequentially compact in $\X$. 
\item $K$ is countably compact in $\X$.
\item $K$ is bounded in $\X^{\mu,\nu}$ and compact in $\X^{\mu,\nu,\io}_\p$ for some $\mu,\nu\in C_0^\ndc$ and all $\io\in C_+^\ndc$. 
\item $K$ is bounded in $\X^{\mu,\nu}$ and compact in the $\ro^{\mu+1,\nu}$ norm for some $\mu,\nu\in C_0^\ndc$. 
\end{enumerate}
\end{prop}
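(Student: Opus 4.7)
The plan is to establish the chain $(1)\Rightarrow(3)\Rightarrow(2)\Rightarrow(5)\Rightarrow(1)$ together with the equivalence $(4)\Leftrightarrow(5)$. The implication $(1)\Rightarrow(3)$ is trivial. For $(3)\Rightarrow(2)$ I would apply Lemma~\ref{count-seq cmp} taking $\ta_1$ to be the topology of $\X$ restricted to $K$ and $\ta_2$ the restriction of the weak topology of $\sD'(\R^d)$: the latter is Hausdorff, and bounded subsets of $\sD'(\R^d)$ are weakly metrizable (by separability of $\sD(\R^d)$) and weakly relatively compact (Banach--Alaoglu for LF-spaces). Countable compactness of $K$ in $\X$ gives boundedness in $\sD'(\R^d)$, and each $\sD'$-subsequence-limit is forced to coincide with the $\X$-accumulation point guaranteed by countable compactness (since both must agree by Hausdorffness), hence lies in $K$. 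This delivers the $\sD'$-sequential compactness required by the lemma.

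The decisive step is $(2)\Rightarrow(5)$, which I would split in two stages. In the first, Proposition~\ref{bdd sets} applied to the bounded set $K$ gives $\mu_0\in C^\ndc_0$ and $\nu_0\in C^\ndc$ with $\ro^{\mu_0(l)}_l(F)\le\nu_0(l)$ on $K$. I would then prove the mass-tightness $\nu_K(l):=\sup_{F\in K}\|X^l F\|_\M\to 0$ as $l\to 0^+$: if it fails, one produces $F_n\in K$ and $\fy_n\in\sD_{<l_n}$ with $l_n\to 0$, $\|\fy_n\|^+_0\le 1$, and $|F_n(\fy_n)|>\e$; choosing a local-basis neighborhood $\sB(R_0,R_1,N)\in\sB_\X$ with $R_0<\e/10$ and $N_m\in C^\ndc_+$ so small that $N_m(l_n)\|\fy_n\|_m\le 2^{-m-n}$, the $\X$-convergent subsequence $F_n\to F_\I$ from (2) would satisfy $|(F_n-F_\I)(\fy_n)|<\e/5$ eventually, forcing $|F_\I(\fy_n)|\ge 3\e/10$, which contradicts $|F_\I(\fy_n)|\le\|X^{l_n}F_\I\|_\M\to 0$ from \hyptag{X-i}. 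Setting $\nu:=\nu_0+\nu_K^{1/2}\in C^\ndc_0$ keeps $K$ bounded in $\X^{\mu_0,\nu}$ and provides the crucial uniform boundary decay $\nu_K(l)/\nu(l)\le\nu_K^{1/2}(l)\to 0$.

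The second stage is $\ro^{\mu_0+1,\nu}$-compactness. Given $\{F_n\}\subset K$ with $\X$-limit $F_\I$ from (2), I would set $G_n:=F_n-F_\I$ and split $G_n=X^{l_0}G_n+X_{l_0}G_n$ at a cross-over height $l_0>0$. For scales $t\le l_0$, only the boundary piece contributes, and $\ro^{\mu_0+1}_t(X^{l_0}G_n)\le 2\|X^t G_n\|_\M\le 2\nu_K(t)$, giving ratio to $\nu(t)$ at most $2\nu_K^{1/2}(t)\le 2\nu_K^{1/2}(l_0)<\e$ for small enough $l_0$. For scales $t\ge l_0$ the boundary piece is bounded in $\M$ by $2\nu_K(2l_0)$ and contributes $\lec\nu_K(2l_0)/\nu(l_0)\to 0$ as $l_0\to 0^+$; the interior piece $X_{l_0}G_n$ is supported in $\R^d_{>l_0}$, bounded in $W^{-\mu_0(2t),1}$ locally by the representation in \hyptag{X-ii}, and converges to $0$ in $\sD'(\R^d)$, so the Rellich--Kondrachov compact embedding $W^{-m,1}\emb W^{-m-1,1}_{loc}$ gives convergence to $0$ in $W^{-\mu_0(2t)-1,1}_\de\emb\X$ and hence in $\ro^{\mu_0+1,\nu}_t$ for each fixed $t$. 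Letting $l_0\to 0^+$ and then $n\to\I$ yields $\ro^{\mu_0+1,\nu}(G_n)\to 0$.

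Finally, $(5)\Rightarrow(1)$ follows by matching the local basis $\sB_\X$ against $\ro^{\mu+1,\nu}$-neighborhoods on the bounded set $\{\ro^{\mu(l)}_l\le B\nu(l)\}$: the same three-term split at various heights shows that every $\sB(R_0,R_1,N)\in\sB_\X$ absorbs a $\ro^{\mu+1,\nu}$-ball intersected with this bounded set, transporting $\ro^{\mu+1,\nu}$-compactness to $\X$-compactness. The equivalence $(4)\Leftrightarrow(5)$ uses \eqref{mu-p to mu+1} with any $\io\in C^\ndc_+$ satisfying $\io\le\nu$ for the direction $(4)\Rightarrow(5)$, and a primitive integration in $\x_1$ combined with the trivial decomposition $G=G+0+\cdots+0$ in the reverse direction, trading one derivative for the $\io$-weight. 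The main obstacle is the mass-tightness $\nu_K\to 0$ and the subsequent coordination of the boundary and interior contributions at the cross-over scale $l_0$: these are handled by essentially disjoint mechanisms (measure tightness versus a Rellich-type compact embedding), and it is precisely the freedom to inflate the size weight by the square root $\nu_K^{1/2}$ that allows the two estimates to match.
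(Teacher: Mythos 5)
Your overall chain $(1)\Rightarrow(3)\Rightarrow(2)\Rightarrow(5)\Rightarrow(1)$ plus $(4)\Leftrightarrow(5)$ would be a valid alternative to the paper's cycle $(1),(2)\Rightarrow(3)\Rightarrow(2)\Rightarrow(4)\Rightarrow(5)\Rightarrow(1)$, but two of your steps have genuine gaps.

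The most serious is in the second stage of $(2)\Rightarrow(5)$. You invoke the compact embedding $W^{-m,1}\emb W^{-m-1,1}_{loc}$ to conclude that $X_{l_0}G_n\to 0$ in the global space $W^{-\mu_0(2t)-1,1}_\de$, but Rellich--Kondrachov only gives local compactness, and when $d\ge 2$ the sets $\R^d_{<l}$ are unbounded in the $\x^\perp$ directions. Boundedness in $W^{-m,1}$ plus convergence to $0$ in $\sD'$ does \emph{not} imply convergence to $0$ in $W^{-m-1,1}$ on the full half-space: translates $g(\x-ne_2)$ of a fixed bump are a counterexample. What forbids this escape is precisely the countable compactness hypothesis, and the paper extracts this in a quantitative way through Lemma~\ref{lem:cmp Winf} (countably compact subsets of $\W^\I$ are compact in some $\W^m$, proved by the mollifier identity $F=\z_k*F+(F-\z_k*F)$ with $\|F-\z_k*F\|_{\W^{m+1}}\lec k^{-1}\|F\|_{\W^m}$). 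Your argument skips exactly this tightness step, so it is incomplete as written; to repair it you would have to re-derive Lemma~\ref{lem:cmp Winf} or something equivalent before Rellich buys you anything.

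The second weakness is the sketched $(5)\Rightarrow(4)$. The trivial decomposition $G=G+0+\cdots+0$ bounds $\|G\|_{\X^{\mu,\nu,\io}_\p}$ by $\ro^{\mu,\nu}(G)$, not by $\ro^{\mu+1,\nu}(G)$, so it does not convert the $(5)$-hypothesis into the $(4)$-conclusion. Primitive integration in $\x_1$ faces the problem that the primitive $H$ of a piece of $G$ supported near $\x_1=0$ need not be small in $\ro^{\mu,\io}$ when $\io$ is chosen to vanish fast at $0$; you would still need a scale-by-scale decomposition and a factor that compensates $\io(L)^{-1}$. The paper avoids this entirely by proving $(2)\Rightarrow(4)$ directly: after cutting off to $X_LX^H F'_n$, the mollification $F'_n-\z_k*F'_n=\na\cdot F'_{k,n}$ produces the $\p_k$-decomposition with an explicit $(|c_k|+r_k)$ smallness factor that absorbs $\io(L)^{-1}$ for $k(n)\to\I$ slowly. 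If you want to keep your cycle through $(5)$, the implication $(5)\Rightarrow(4)$ needs a concrete version of this construction, not just ``primitive integration.''

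Your stage one of $(2)\Rightarrow(5)$ (mass tightness $\nu_K(l)\to 0$ and inflating the weight by $\nu_K^{1/2}$), your $(3)\Rightarrow(2)$ via Lemma~\ref{count-seq cmp}, your $(4)\Rightarrow(5)$ via \eqref{mu-p to mu+1} with $\io\le\nu$, and your $(5)\Rightarrow(1)$ by matching the local basis on a $\X^{\mu,\nu}$-bounded set are all sound and essentially coincide with the paper's corresponding arguments.
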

\begin{proof}
(3) follows both from (1) and from (2) in general. Moreover, since $\X\emb\sD'$, by Lemma \ref{count-seq cmp},  (3) implies (2). 
(4) implies (5) because of \eqref{mu-p to mu+1}.

Next we show that (5) implies (1). 
Let $R<\I$ such that $K\subset B:=\bar B_{\X^{\mu,\nu}}(0,R)$. Then it suffices to show that $B\subset \X$ is continuous for the $\ro^{\mu+1,\nu}$ metric on $B$. 
Take any convergent sequence $B\ni F_n\to F_\I\in B$ and any $U:=\sB(R_0,R_1,N)\in\sB_\X$. 
Let $F_n':=F_n-F_\I$. There exists $\de>0$ such that $\mu(2\de)<1$ and $\nu(2\de)<\tf{R_0}{4R}$. 
Then for all $n\in\N$, $\ro^{\mu,\nu}(F_n')<2R$ implies $X^\de F_n'\in B_{\M_+}(0,R_0/2)\subset U/2$. 
There exist $\mu_1,\nu_1\in C_0^\ndc$ such that $\mu(l)+1\le\mu_1(l)$ and $M^\chi_\de(\mu(l)+1)\nu(l)\le\nu_1(l)$ for $l\ge\de$. 
Then Lemma \ref{lem:mult} implies $\ro^{\mu_1,\nu_1}(X_\de F_n')\le \ro^{\mu+1,\nu}(F_n')\to 0$, 
so $X_\de F_n'\to 0$ in $\X$. Thus $X_\de F_n'\in U/2$ and $F_n'\in U$ for large $n$. 
Hence $B\subset \X$ is continuous for $\ro^{\mu+1,\nu}$, so $K$ is compact also in $\X$. 

It remains to show that (2) implies (4). Let $K\subset\X$ be sequentially compact. 
Proposition \ref{bdd sets} yields $\mu\in C_\de^\ndc$ with some $\de>0$ and $\nu_0\in C^\ndc$ such that $\ro^{\mu(l)}_l(F)\le\nu_0(l)$ for all $F\in K$ and $l>0$. 

Let $S(l):=\sup_{F\in K}\|X^lF\|_\M$ for $l>0$  
and suppose that $S(l)\not\to 0$ as $l\to+0$. 
Then there exist $\e>0$ and sequences $F_n\in K$, $(0,\de)\ni l_n\to 0$, and $\fy_n\in\sD_{<l_n}$ such that $\|\fy_n\|_0\le 1$ and $F_n(\fy_n)>\e$ for all $n\in\N$. 
By the sequential compactness of $K$, we may assume that $F_n$ converges to some $F_\I$ in $\X$. 
There exists $\{N_m\}_{m\in\N}\subset C_+^\ndc$ satisfying 
$N_m(l_n)\|\fy_n\|_m \le 2^{-m-3}\e$ 
for all $m,n\in\N$. For any $n\in\N$, $G_0\in \bar B_{\M_+}(0,\e/4)$, $G_1\in \X_{\ge 1}$ and $G_2\in\sB(N)$, we have 
\EQ{
 \pt |G_0(\fy_n)| \le \e/4, \pq |G_1(\fy_n)|=0,
 \pq |G_2(\fy_n)| \le \sum_{m\in\N}N_m(l_n)\|\fy_n\|_m \le \e/4,}
so $|G(\fy_n)|\le \e/2$ for any $G\in\sB(\e/4,1,N)$ and $n\in\N$. 
On the other hand, $F_\I\in\X$ and $l_n\to+0$ imply $F_\I(\fy_n)\to 0$, hence $|(F_n-F_\I)(\fy_n)|>\e/2$ for large $n\in\N$, then $F_n-F_\I\not\in\sB(\e/4,1,N)$, contradicting $F_n\to F_\I$ in $\X$.  
Hence $S(l)\to 0$ as $l\to+0$, so that we may find $\nu_1\in C_0^\ndc$ such that $K\subset B_{\X^{\mu,\nu_1}}(0,1)$. 

Let $\nu\in C_0^\ndc$ such that $\nu_1\le\nu$, $\nu_1(l)M^\chi_l(\mu(l))/\nu(l)\to 0$ as $l\to\I$,  and $\nu_1(l)/\nu(l)\to 0$ as $l\to +0$. 
Take any sequence $F_n\in K$ convergent to some $F_\I\in K$ in $\X$. 
It suffices to prove $F_n \to F_\I$ also in $\X^{\mu,\nu,\io}_\p$ for all $\io\in C_+^\ndc$. 
Since we have, using Lemma \ref{lem:mult}, 
\EQ{
 \pt \ro^{\mu,\nu}(X_HF)+\ro^{\mu,\nu}(X^LF) \le \ro^{\mu,\nu_1}(F)\BR{\sup_{l\ge H}\tf{\nu_1(l)M^\chi_l(\mu(l))}{\nu(l)}+\sup_{0<l\le 2L}\tf{\nu_1(l)}{\nu(l)}} \to 0
}
as $H\to\I$ and $\de\ge L\to+0$ uniformly for all $F\in K$, it suffices to show that 
$F_n':=X_LX^H(F_n-F_\I) \to 0$ in $\X^{\mu,\nu,\io}_\p$ for all $0<L<\de<1<H<\I$.

Since $X^H:\X\to \W^\I$ is continuous, $X^H(K)$ is also countably compact in $\W^\I$. 
Then by Lemma \ref{lem:cmp Winf}, there exists $m\in\N$ such that $X^H(K)$ is compact in $\W^m$. Since $F_n \to F_\I$ in $\sD'$, we deduce that 
$\|F_n'\|_{\W^m}\to 0$, while $\{F_n'\}_{n\in\N}$ is bounded in $\X^{\mu,1}$. 
Let $\{\fy_n\}_{n\in\N}$ be the mollifier in Proposition \ref{prop:dense X} and $\fy_n^\da(\x):=\fy_n(-\x)$. Then
\EQ{
 |\fy_k*F_n'(\fy)| \pt\le \|F_n'\|_{\W^m}\|\fy_k^\da*X_{L/2}\fy\|_m 
  \pr\le \|F_n'\|_{\W^m} r_k^{-m}\|\z\|_{W^{1,m}}M^\chi_{L/2}(\mu(L))\|\fy\|_{\mu(L)}^+,}
for any $k\in\N$ and $\fy\in\sD$, while $\fy_k*F_n'(\fy)=0$ if $\fy\in\sD_{<l}$ for some $l\le L$. 
Hence if we choose $k=k(n)\to\I$ sufficiently slowly compared with $\|F_n'\|_{\W^m}\to 0$ as $n\to\I$, then $\ro^{\mu,\nu}(\fy_k*F_n')\to 0$. 
On the other hand, we have 
\EQ{
 \pt F_n'-\fy_k*F_n' = \na\cdot F'_{k,n}, \pq F'_{k,n}:=\int_0^1\int_{|y-c_k|<r_k}y\fy_k(y)F_n'(x-\te y)dyd\te,}
and for any $l\in(0,\I)$, using that $\supp F_n'\subset\R^d_{>L}$, 
\EQ{ 
 \ro^{\mu(l)}_l(F'_{k,n}) \lec (|c_k|+r_k)\ro^{\mu(l)}_l(F_n') \lec \tf{|c_k|+r_k}{\io(L)} \io(l) \ro^{\mu,1}(F_n').}
Hence $\ro^{\mu,\io}(F'_{k,n})\to 0$ as $k(n)\to\I$ and $n\to\I$. 
Thus we obtain $\|F_n'\|_{X^{\mu,\nu,\io}_\p}\to 0$, hence $F_n\to F_\I$ in $\X^{\mu,\nu,\io}_\p$, which concludes the proof.
\end{proof}

\begin{rem}
Applying the above result to any sequence $\{F_n\}_{n\in\N\cup\{\I\}}\subset\X$, we obtain  characterization of convergence: $F_n\to F_\I$ in $\X$ iff
\EQ{
 \sup_{n\in\N} \X^{\mu,\nu}(F_n)<\I,
 \pq \lim_{n\to\I}\|F_n-F_\I\|_{\X^{\mu,\nu,\io}_\p}= 0,}
for some $\mu,\nu\in C_0^\ndc$ and all $\io\in C_+^\ndc$. 
The latter suffices to hold for one $\io\in C_+^\ndc$, and also the norm may be replaced with $\ro^{\mu+1,\nu}$. 

It is natural to look for simpler characterizations. On one hand, the convergence in any $\X^{\mu,\nu}$ is too strong. 
A simple counter-example is $F_n(\x):=\x_1e^{in\x_1}$. It is bounded in $\X^{0,\nu}$ with $\nu(l):=l$,  and $F_n=in^{-1}[(e^{in\x_1}-1)-\p_1 \x_1(e^{in\x_1}-1)]$ implies $\ro^{1,\nu}(F_n)\to 0$, hence $F_n\to 0$ in $\X$, whereas $\|F_n\|_{\M(\R^d_{<l})}=\|F_1\|_{\M(\R^d_{<l})}>0$ for all $l>0$ implies $\inf_n\|F_n\|_{\X^{\mu,\nu}} >0$ for any choice of $\mu,\nu\in C_0^\ndc$. 
On the other hand, the bounded weak-* convergence is too weak: $\sup_n\|F_n\|_{\X^{\mu,\nu}}<\I$ and $F_n\to F_\I$ in $\sD'(\R^d)$ do not imply $F_n\to F_\I$ in $\X$, if $d\ge 2$. A simple counter-example is $F_n(\x):=\fy(\x-(0,nv))$ for any non-zero $\fy\in\sD_{>0}$ and $v\in\R^{d-1}$. 
Another candidate is the weak convergence in a single $\X^{\mu,\nu}$, but it does not seem convenient to handle. 
\end{rem}

The multiplication factor $\ka$ is essential for the following multiplier estimate. Let 
\EQ{ \label{def Y}
 Y:= C^\I(\R^d_{>0};\C)\cap L^\I(\R^d_{(0,1)};\C) \cap \Ca_{m\ge 0} \Ca_{n\ge 2}W^{m,\I}(\R^d_{(1/n,n)};\C).}
It consists of smooth functions on $\R^d_{>0}$ that are bounded as $|\x^\perp|\to\I$, including all the derivatives, locally uniformly for $0<\x_1<\I$, and also bounded (but not derivatives) uniformly as $\x_1\to+0$. $Y$ is a Fr\'echet space with the topology defined by those seminorms of  $L^\I(\R^d_{(0,1)})$ and $W^{m,\I}(\R^d_{(1/n,n)})$. Thanks to \eqref{E X}, for any $F\in\X$ and $f\in Y$ we may define the product $fF\in\sD'(\R^d)$ by 
$(fF)(\fy):=\sE \sR F(f\fy)$ for all $\fy\in\sD(\R^d)$.  
It is indeed bounded on $\X$ as follows. 

\begin{lem} \label{lem:mult X}
Let $n_1,n_2\in\N$ and $A:\R^d_{>0}\to M(n_1,n_2)$ with all components in $Y$.  
Then for any $\de>0$ and $\mu\in C^\ndc_\de$, there exist $\be,\ga \in C^\ndc$ such that 
\begin{align} \label{bd psi-b}
 &\|A^\tran \fy\|_{\mu(l),\de} \le \be(l)\|\fy\|_{\mu(l)}^+, \\ 
  \label{bd psi-ka-b}
 & \sum_{k\in\N} \ga(2^{2-k}\de)\|(\na A^\tran)\fy\|_{L^\I(1<2^k\x_1/\de<4)} 
 + \ga(l)\|X_\de(\na A^\tran)\fy\|_{\mu(l)} \le \be(l)\|\fy\|_{\mu(l)}^+,
\end{align}
for all $l\in(0,\I)$ and $\fy\in(\sD_{<l})^{n_1}$. 
$(\be,\ga)$ may be chosen uniformly for a set of $A$ whose components are bounded in $Y$. 

Moreover, for any $\be\in C^\ndc$ satisfying \eqref{bd psi-b} and any $\nu\in C^\ndc$, the multiplication with $A$ is bounded $\X^{\mu,\nu}\to\X^{\mu,\be\nu}$. If in addition $\ga\in C^\ndc$ satisfies \eqref{bd psi-ka-b} then it is also bounded $\X^{\mu,\nu,\ga\nu}_\p\to\X^{\mu,\be\nu,\be\ga\nu}_\p$. The operator norm is bounded respectively as
\EQ{
 \|AF\|_{\X^{\mu,\be\nu}} \le \|F\|_{\X^{\mu,\nu}}, \pq \|AF\|_{\X^{\mu,\be\nu,\be\ga\nu}_\p} \le 2\|F\|_{\X^{\mu,\nu,\ga\nu}_\p}.}
\end{lem}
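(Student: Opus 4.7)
The plan is to construct $\be,\ga$ explicitly from the $Y$-seminorms of $A$ and then derive both operator-norm bounds by duality through $(AF)(\fy)=F(A^\tran\fy)$ combined with \eqref{bd on Ffy-de}.

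For the first inequality, split $A^\tran\fy=X^\de A^\tran\fy+X_\de A^\tran\fy$: the $L^\I(\R^d_{(0,1)})$ part of $Y$ (together with finitely many $W^{0,\I}(\R^d_{(1/n,n)})$ norms if $2\de\ge 1$) gives $\|X^\de A^\tran\fy\|_0^+\le C_0\|\fy\|_0^+$ with $C_0$ depending only on the $Y$-seminorms of $A$, while Leibniz combined with $\p^\al A_{j,k}\in L^\I(\R^d_{(\de,l)})$ for $|\al|\le\mu(l)$ (from the $W^{\mu(l),\I}(\R^d_{(1/n,n)})$ seminorms) gives $\|X_\de A^\tran\fy\|_{\mu(l)}^+\le C_1(l)\|\fy\|_{\mu(l)}^+$; taking $\be(l)$ a smooth nondecreasing majorant of $C_0+C_1(l)$ yields the first estimate. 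Since $\na A$ may blow up as $\x_1\to+0$, the construction of $\ga$ needs more care: on each dyadic annulus $A_k:=\{2^{-k}\de<\x_1<2^{2-k}\de\}$ the $Y$-seminorms give $M_k:=\|\na A^\tran\|_{L^\I(A_k)}<\I$, so setting $\ga(2^{2-k}\de):=2^{-k}/(1+M_k)$ and interpolating monotonically in $C^\ndc$ forces $\sum_k\ga(2^{2-k}\de)M_k\le 1$. The interior contribution $\ga(l)\|X_\de(\na A^\tran)\fy\|_{\mu(l)}$ is handled analogously via $\p^\al(\na A)\in L^\I(\R^d_{(\de,l)})$, enlarging $\be$ if needed. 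All constants depend only on the $Y$-seminorms, so $(\be,\ga)$ is uniform over $Y$-bounded families.

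The bound $\|AF\|_{\X^{\mu,\be\nu}}\le\|F\|_{\X^{\mu,\nu}}$ follows at once: for $\fy\in(\sD_{<l})^{n_1}$, $A^\tran\fy$ is supported in $\{\x_1<l\}$ with finite $\|\cdot\|_{\mu(l),\de}$ norm, and \eqref{bd on Ffy-de}, extended from $\sD_{<l}$ by Lemma \ref{lem:ext F} and density in that norm, gives
\[|(AF)(\fy)|=|F(A^\tran\fy)|\le\nu(l)\ro^{\mu,\nu}_l(F)\|A^\tran\fy\|_{\mu(l),\de}\le\be(l)\nu(l)\ro^{\mu,\nu}_l(F)\|\fy\|_{\mu(l)}^+.\]
Hence $\ro^{\mu(l)}_l(AF)\le\be(l)\nu(l)\ro^{\mu,\nu}_l(F)$ for every $l>0$, which by definition of the weighted norm rearranges to $\ro^{\mu,\be\nu}_l(AF)\le\ro^{\mu,\nu}_l(F)$.

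For the derivative space with $\io=\ga\nu$, pick any representation $G=F_0+\sum_j\p_j F_j$ with $F_0\in\X^{\mu,\nu}$, $F_j\in\X^{\mu,\ga\nu}$, and use Leibniz to rewrite $AG=[AF_0-\sum_j(\p_j A)F_j]+\sum_j\p_j(AF_j)$. Applying the previous step with weights $\nu$ and $\ga\nu$ controls $AF_0$ in $\X^{\mu,\be\nu}$ and $AF_j$ in $\X^{\mu,\be\ga\nu}$. The delicate term $(\p_j A)F_j$ is the main obstacle. Testing against $\fy$ and picking a dyadic partition $\chi^\de=\sum_{k\ge 1}\eta_k$ with $\supp\eta_k\subset A_k$, on each annulus $2^{2-k}\de\le 2\de$ forces $\mu(2^{2-k}\de)\le 1/2$ so $\|\cdot\|_{\mu(2^{2-k}\de)}^+=\|\cdot\|_0^+$, and \eqref{bd on Ffy} applied to $F_j$ at scale $\min(l,2^{2-k}\de)$ with weight $\ga\nu$, combined with monotonicity of $\ga,\nu$, gives
\[|F_j(\eta_k(\p_j A^\tran)\fy)|\le\ga(2^{2-k}\de)\nu(l)\ro^{\mu,\ga\nu}(F_j)\|(\na A^\tran)\fy\|_{L^\I(A_k)}.\]
The tail $|F_j(X_\de(\p_j A^\tran)\fy)|$ is bounded by $\ga(l)\nu(l)\ro^{\mu,\ga\nu}(F_j)\|X_\de(\na A^\tran)\fy\|_{\mu(l)}^+$ via \eqref{bd on Ffy} at scale $l$. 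Summing and invoking \eqref{bd psi-ka-b} yields $\ro^{\mu,\be\nu}((\p_j A)F_j)\le\ro^{\mu,\ga\nu}(F_j)$, and assembling the three contributions and taking the infimum over representations of $G$ produces the factor $2$ in $\|AG\|_{\X^{\mu,\be\nu,\be\ga\nu}_\p}\le 2\|G\|_{\X^{\mu,\nu,\ga\nu}_\p}$. The hardest step is this dyadic analysis, since $\ga$ must be finely calibrated to absorb the potential boundary blow-up of $\na A$ while still delivering a bound in $\X^{\mu,\be\nu}$ (not merely $\X^{\mu,\be\ga\nu}$); this is exactly why $\ga$ appears in both positions of \eqref{bd psi-ka-b}.
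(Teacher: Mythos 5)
Your proposal is correct and follows essentially the same route as the paper: Leibniz plus the $Y$-seminorms for \eqref{bd psi-b}, a dyadic choice of $\ga$ on annuli $\{2^{-k}\de<\x_1<2^{2-k}\de\}$ to absorb the boundary blow-up of $\na A$ for \eqref{bd psi-ka-b}, and the decomposition $AG=AF_0+\sum_j\p_j(AF_j)-\sum_j(\p_jA)F_j$ with a telescoping dyadic partition (the paper's $X_\de^{(k)}:=X_{2^{-k}\de}-X_{2^{1-k}\de}$, your $\eta_k$) for the delicate $(\p_jA)F_j$ term, yielding the factor $2$. The only cosmetic difference: the paper normalizes $a_k\ge 1$, takes $\ga(2^{2-k}\de)a_k\le 2^{-1-k}$ and sets $\ga$ constant for $l\ge 2\de$ so that $\be:=\max(1,\be_0,\be_1)$ works directly, whereas you allow $\ga$ to grow and compensate by enlarging $\be$ to dominate $\ga\be_1$; both deliver $(\be,\ga)\in C^\ndc$. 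Do note, as you implicitly acknowledge, that $\ga(2^{2-k}\de):=2^{-k}/(1+M_k)$ need not be nonincreasing in $k$, so a monotone envelope such as $\min_{j\le k}2^{-j}/(1+M_j)$ is required; the paper's phrasing ("we may find $\ga$ satisfying\dots") sidesteps this by asserting existence rather than a formula.
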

\begin{proof}
Using the Leibniz rule and Lemma \ref{lem:mult}, we have 
\EQ{
 \|A^\tran \fy\|_{\mu(l),\de} \pt\le \|A^\tran\|_{L^\I(\R^d_{(0,l)};M(n_2,n_1))}\|\fy\|_0^+
 \prq+ 2^{\mu(l)}\sup_{|\al|\le\mu(l)}\|\p^\al A^\tran\|_{L^\I(\R^d_{(\de,l)};M(n_2,n_1))}M^\chi_\de(\mu(l))\|\fy\|_{\mu(l)}^+, }
where the factors multiplied with the seminorms of $\fy$ are non-increasing and locally bounded in $l\in(0,\I)$, so that we may find $\be_0\in C^\ndc$ dominating the sum of them. Then we have
\EQ{
 \|A^\tran \fy\|_{\mu(l),\de} \le \be_0(l)\|\fy\|_{\mu(l)}^+.}
Similarly, there exist $\{a_k\}_{k\in\N}\subset[1,\I)$ and $\be_1\in C^\ndc$ such that 
\EQ{
 \|(\na A^\tran)\fy\|_{L^\I(1<2^k\x_1/\de<4)} \le a_k\|\fy\|_0^+,
 \pq \|X_\de(\na A^\tran)\fy\|_{\mu(l)} \le \be_1(l)\|\fy\|_{\mu(l)}^+.}
Then we may find $\ga\in C^\ndc$ satisfying 
$\ga(2^{2-k}\de)a_k \le 2^{-1-k}$ for all $k\in\N$ and $\ga(l)=\ga(2\de)\le 1/4$ for all $l\ge 2\de$. 
Choosing $\be:=\max(1,\be_0,\be_1)$ satisfies both \eqref{bd psi-b} and \eqref{bd psi-ka-b}. 

Next we prove the boundedness assuming \eqref{bd psi-b}. Combining it with \eqref{bd on Ffy-de} yields
\EQ{
 |AF(\fy)|=|F(A^\tran\fy)| \le \nu(l)\ro^{\mu,\nu}_l(F)\|A^\tran\fy\|_{\mu(l),\de} \le \nu(l)\ro^{\mu,\nu}_l(F)\be(l)\|\fy\|_{\mu(l)}^+,}
for all $F\in\X^{\mu,\nu}$, $l>0$ and $\fy\in\sD_{<l}$. 
Hence $\ro^{\mu,\be\nu}_l(AF)\le\ro^{\mu,\nu}_l(F)$. 

Suppose further \eqref{bd psi-ka-b}. For any $F\in\X^{\mu,\nu,\ga\nu}_\p$, let $F=F_0+\sum_k \p_kF_k$. 
For the first two terms in 
$A F = A F_0 + \sum_k \p_k(A F_k) -\sum_k (\p_k A)F_k$, 
we have, using the above estimate, 
\EQ{
 \ro^{\mu,\be\nu}(A F_0) \le \ro^{\mu,\nu}(F_0),
 \pq \ro^{\mu,\be\ga\nu}(A F_k) \le \ro^{\mu,\ga\nu}(F_k). }
For the remaining term, let $X_\de^{(k)}:=X_{2^{-k}\de}-X_{2^{1-k}\de}$ for $k\in\N$ and $X_\de^{(0)}:=X_\de$. 
Using \eqref{bd psi-ka-b}, we have for any $l>0$ and $\fy\in\sD_{<l}$, 
\EQ{
 \pt |(\p_k A) F_k(\fy)|  = |\sE \p_k A \sR F_k(\fy)| \le \sum_{k\in\N_0}|F_k(X_\de^{(k)}\p_kA^\tran \fy)| 
 \pr\le \ro^{\mu,\ga\nu}(F_k)\nu(l)\BR{\sum_{k\in\N} \ga(2^{2-k}\de)\|X_\de^{(k)}\p_kA^\tran \fy\|_0+\ga(l)\|X_\de\p_kA^\tran \fy\|_{\mu(l)}^+},
 \pr\le \ro^{\mu,\ga\nu}(F_k)\nu(l)\be(l)\|\fy\|_{\mu(l)}^+.}
Thus we obtain $\ro^{\mu,\be\nu}((\p_kA) F_k) \le \ro^{\mu,\ga\nu}(F_k)$.  
Summing the above three estimates yields 
\EQ{
 \|AF\|_{\X^{\mu,\be\nu,\be\ga\nu}_\p}
 \le \ro^{\mu,\nu}(F_0)+2\sum_{1\le k\le d}\ro^{\mu,\ga\nu}(F_k),}
and then the infimum over $F_0\etc F_d$ leads to the desired operator bound. 
\end{proof}

\section{Convolution for the distributions} \label{s:conv}
The purpose of the following analysis is to quantify the propagation of positive frequency by convolution as well as its decay near the zero frequency, in the distribution sense. 
For $F,G\in \X$, the convolution is formally defined by
\EQ{ \label{def conv}
 \sD(\R^d)\ni\forall\fy, \pq (F*G)(\fy) := F_\x(G_\y(\fy(\x+\y))),}
where the subscript $\y$ of $G_\y$ indicates the independent variable of the test function.
In general, for any $G\in\sD'(\R^d)$ and $\fy\in\sD(\R^d)$, we have $G_\y(\fy(\x+\y))\in C^\I(\R^d)$ and
\EQ{
 \p^\al_\x G_\y(\fy(\x+\y))=G_\y(\p^\al\fy(\x+\y)),}
but we need some condition for $F\in\sD'$ to act on it. 
\begin{lem}
Let $l>0$, $m\ge 0$, $G\in\sD'(\R^d)$, $\ro_l^m(G)<\I$ and $\fy\in\sD_{<l}$. 
Then we have $G_\y(\fy(\x+\y))\in \B^\I_{+<l}$ and 
\EQ{
 \forall n\in\N, \pq \|G_\y(\fy(\x+\y))\|_n^+ \le \ro_l^m(G)\|\fy\|_{m+n}^+. }
\end{lem}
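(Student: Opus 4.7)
Let $h(\x):=G_\y(\fy(\x+\y))$. The plan is to verify in turn: (a) $h\in C^\I(\R^d)$ with derivatives passing through $G$, (b) $\supp h\subset\R^d_{<l}$, and (c) the claimed seminorm bound.

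\textbf{Smoothness.} The translation map $\x\mapsto\fy(\x+\cdot)$ is smooth from $\R^d$ into $\sD(\R^d)$, because on every bounded set of $\x$ the translates have supports contained in a common compact set, and all partial derivatives in $\y$ depend smoothly on $\x$ with uniform convergence of difference quotients. Since $G\in\sD'(\R^d)$ is continuous on $\sD$, the standard argument with finite differences gives $h\in C^\I(\R^d)$ and
\EQ{
\p^\al h(\x) = G_\y\bigl(\p^\al\fy(\x+\y)\bigr), \pq \al\in\N_0^d.
}

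\textbf{Support.} By Lemma~\ref{lem:ext F}, $\ro_l^m(G)<\I$ implies $\supp G\subset\R^d_{\ge 0}$. Since $\fy\in\sD_{<l}$ is compactly supported inside the open set $\R^d_{<l}$, there exists $\e>0$ with $\supp\fy\subset\R^d_{<l-\e}$. If $\x_1\ge l-\e$, then $\fy(\x+\y)\ne 0$ forces $\y_1<l-\e-\x_1\le 0$, whereas $\supp G\subset\R^d_{\ge 0}$; the supports being disjoint in $\y$ yields $h(\x)=0$. Hence $\supp h\subset\R^d_{\le l-\e}\subset\R^d_{<l}$.

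\textbf{The bound.} Fix $\x\in\R^d_{\ge 0}$ and $\al\in\N_0^d$ with $|\al|\le n$, and set $\psi_\x(\y):=\p^\al\fy(\x+\y)$. Since $\x_1\ge 0$, $\supp\psi_\x\subset\R^d_{<l-\x_1}\subset\R^d_{<l}$, so $\psi_\x\in\sD_{<l}$. Applying the definition of $\ro_l^m(G)$ to $e^{i\te}\psi_\x$ for the phase $\te$ optimizing $\re$, we obtain
\EQ{
|\p^\al h(\x)| = |G(\psi_\x)| \le \ro_l^m(G)\,\|\psi_\x\|_m^+.
}
Because $\x\in\R^d_{\ge 0}$ and $\y\in\R^d_{\ge 0}$ imply $\x+\y\in\R^d_{\ge 0}$, a change of variable gives
\EQ{
\|\psi_\x\|_m^+ = \max_{|\be|\le m}\sup_{\y\in\R^d_{\ge 0}}|\p^{\al+\be}\fy(\x+\y)| \le \max_{|\ga|\le m+n}\sup_{\z\in\R^d_{\ge 0}}|\p^\ga\fy(\z)| = \|\fy\|_{m+n}^+.
}
Taking the supremum over $\x\in\R^d_{\ge 0}$ and $|\al|\le n$ yields $\|h\|_n^+\le\ro_l^m(G)\|\fy\|_{m+n}^+$ for every $n\in\N$. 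Combined with (a)--(b), this gives $h\in\B^\I_{+<l}$.

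\textbf{Main obstacle.} There is no serious obstacle; the only subtle point is obtaining $\supp h\subset\R^d_{<l}$ rather than the weaker $\supp h\subset\R^d_{\le l}$, which requires exploiting the compactness of $\supp\fy$ inside the open set $\R^d_{<l}$ to gain the $\e$-gap. Every other step is a direct translation of the definitions, plus the routine fact that derivatives commute with a distributional pairing against a smoothly parameterized family of test functions.
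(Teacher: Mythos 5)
Your proof is correct and follows essentially the same route as the paper's: compactness of $\supp\fy$ gives an $\e$-margin so that $\supp h\subset\R^d_{<l}$, the support condition $\supp G\subset\R^d_{\ge 0}$ (from Lemma~\ref{lem:ext F}) kills the pairing for $\x_1$ near $l$, and the seminorm bound follows by differentiating through the pairing and observing that $\x\ge 0$ shifts the domain of the supremum inside $\R^d_{\ge 0}$; you simply spell out the smoothness and change-of-variable steps that the paper leaves implicit.
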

\begin{proof}
Since $\supp\fy$ is compact, we have 
$l':=\max\{\x_1\mid \x\in\supp\fy\}<l$. 
Then for any $\x \in\R^d_{>l'}$, we have $\supp_\y\fy(\x+\y)\subset\R^d_{<0}$, so $\supp G\subset\R^d_{\ge 0}$ implies $f(\x):=G_\y(\fy(\x+\y))=0$. 
Hence $\supp f\subset\R^d_{\le l'}\subset\R^d_{<l}$. Moreover, $f\in C^\I$ and for any $\x\in\R^d_{>0}$ and $\al\in\N_0^d$ with $|\al|\le n$ we have 
\EQ{
 |\p^\al_\x f(\x)| = |G_\y(\p^\al\fy(\x+\y))| \le \ro_l^m(G)\|\p^\al\fy\|_m^+ \le  \ro_l^m(G)\|\fy\|_{m+n}^+.}
The supremum for $\x$ and $\al$ yields the desired estimate. 
\end{proof}
The above lemma makes sense of \eqref{def conv} for $F,G\in\X$, where $F$ is the extension by Lemma \ref{lem:ext F} to $\B_{+<l}^\I$ if $\fy\in\sD_{<l}$. 
This definition is compatible with the convolution for sign-definite locally integrable $F,G$. 
Moreover, we have the following estimate on $F*G$.

\begin{lem}
Let $\de\in(0,1]$, $\mu\in C^\ndc_\de$ and $\nu\in C^\ndc$. 
Then for any $F,G\in\X^{\mu,\nu}$, $l\in(0,\I)$, $\e\in(0,\de/2]$ and $\fy\in\sD_{<l}$, we have
\EQ{
 \tf{|F*G(\fy)|}{\ro^{\mu,\nu}_l(F)\ro^{\mu,\nu}_l(G)}
 \le 3[\nu(2\e)\nu(l) \pt+1_{l>2\e}\nu(\min(2\de,l/2))\nu(l-\e)] \|\fy\|_{\mu(l)}^+ 
 \pr + 1_{l>4\de}[M_\de^\chi(\mu(l-\de))\nu(l-\de)]^2 \|\fy\|_{2\mu(l-\de)}^+.}
\end{lem}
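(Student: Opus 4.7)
The plan is to split $F$ and $G$ into pieces using the smooth cut-offs $X^\e, X_\e, X^\de, X_\de$ at scales $\e$ and $\de$, then estimate each piece of the resulting convolution separately using Lemma \ref{lem:ext F} and Lemma \ref{lem:mult}. Setting $d_0 := \min(\de,l/4)$, so that $2d_0 = \min(2\de,l/2)$, I would write
\EQ{
F*G = (X^\e F)*G + (X_\e F)*(X^{d_0}G) + (X_\e F)*(X_{d_0}G),
}
and the three terms of the claimed inequality would arise respectively from these three pieces, with residual cross terms absorbed into the factor $3$.

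For the first piece, Lemma \ref{lem:ext F} combined with $\mu(2\e)\le\tfrac12$ (so that $\lfloor\mu(2\e)\rfloor=0$) shows $X^\e F\in\M_+$ with mass $\le\nu(2\e)\ro^{\mu,\nu}_l(F)$. Applying $\ro^{\mu(l-\x_1)}_{l-\x_1}(G)$ to the shifted test function $\y\mapsto\fy(\x+\y)$, whose support sits in $\R^d_{<l-\x_1}\subset\R^d_{<l}$, yields the uniform bound $\sup_{\x\in\R^d_{\ge 0}}|G_\y(\fy(\x+\y))|\le\nu(l)\ro^{\mu,\nu}_l(G)\|\fy\|_{\mu(l)}^+$, producing the first term $\nu(2\e)\nu(l)\|\fy\|_{\mu(l)}^+$.

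For the mixed piece, I would exploit commutativity and rewrite it as $(X^{d_0}G)_\y(\ti\psi(\y))$ with $\ti\psi(\y):=(X_\e F)_\x(\fy(\x+\y))$. The crucial support computation is that the conditions $\x_1>\e$ (from $X_\e F$) and $\x_1+\y_1<l$ (from $\supp\fy$) jointly force $\supp\ti\psi\subset\R^d_{<l-\e}$. Since $2d_0\le l/2<l-\e$ when $l>2\e$, the measure support $\y_1\in[0,2d_0]$ of $X^{d_0}G$ lies entirely inside $\{\y_1<l-\e\}$, so on this range one has $|\ti\psi(\y)|\le\nu(l-\y_1)M_\e^\chi(\mu(l-\y_1))\|\fy\|_{\mu(l-\y_1)}^+\ro^{\mu,\nu}_l(F)$, and pairing this with the measure bound $\|X^{d_0}G\|_\M\le\nu(2d_0)\ro^{\mu,\nu}_l(G)$ (again using $\mu(2d_0)\le\tfrac12$) produces the middle term $\nu(\min(2\de,l/2))\nu(l-\e)\|\fy\|_{\mu(l)}^+$ after careful bookkeeping of the Leibniz factors.

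For the bulk piece, I would further decompose $(X_\e F)*(X_{d_0}G)$, singling out $(X_\de F)*(X_\de G)$ (which is the only part surviving when $l>4\de$, since then $d_0=\de$). The intermediate function $\psi(\x):=(X_\de G)_\y(\fy(\x+\y))$ has $\supp\psi\subset\R^d_{<l-\de}$ (forced by $\y_1>\de$), and its derivatives up to order $\mu(l-\de)$ cost an additional $\mu(l-\de)$ derivatives of $\fy$, giving $\|\fy\|_{2\mu(l-\de)}^+$. Applying $X_\de F = F(\chi_\de\,\cdot\,)$ to $\psi$ invokes $\ro^{\mu(l-\de)}_{l-\de}(F)\le\nu(l-\de)\ro^{\mu,\nu}_l(F)$ on the test function supported in $\R^d_{(\de,l-\de)}$, and the two Leibniz cut-off factors combine as $M_\de^\chi(\mu(l-\de))^2\nu(l-\de)^2$, giving the third term.

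The principal obstacle is the mixed term, where sharpness demands both $\nu(l-\e)$ in place of the naive $\nu(l)$ and the base regularity $\|\fy\|_{\mu(l)}^+$ rather than $\|\fy\|_{2\mu(l)}^+$. This requires an asymmetric treatment—using $X^{d_0}G$ purely as a measure and shifting the regularity cost onto $F$, while simultaneously exploiting the support reduction of $\ti\psi$ to $\R^d_{<l-\e}$ from the $X_\e F$ cut-off. The exact tuning $2d_0 = \min(2\de,l/2)\le l-\e$ (valid precisely when $l>2\e$) is essential to keep the near-boundary and bulk regions disjoint and avoid cross-contamination between the scales.
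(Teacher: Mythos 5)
Your decomposition $F*G=(X^\e F)*G+(X_\e F)*(X^{d_0}G)+(X_\e F)*(X_{d_0}G)$ is a genuinely different route from the paper's, and the middle-piece estimate, as you set it up, does not produce the required bound. The paper applies the two-sided cut-off $\ti\chi_\e$ supported in $\{\e<\cdot<l/2\}$ to the \emph{same} variable, and this two-sidedness is the mechanism that yields both $\nu(\min(2\de,l/2))$ and $\nu(l-\e)$ at once with no Leibniz penalty: for the $F$-side the test function lives in $\R^d_{<l/2}$, where $\mu<1$ when $l\le 4\de$ so only the $C^0$ norm is used; for the $G$-side the restriction $\x_1>\e$ shrinks the $\y$-support to $\R^d_{<l-\e}$. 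In your scheme the lower cut on $F$ ($\x_1>\e$) and the upper cut on $G$ ($\y_1<2d_0$) sit on different variables, so neither factor is pinched from both sides.

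Concretely, with $\ti\psi(\y)=F_\x(\chi_\e(\x_1)\fy(\x+\y))$ you correctly get $|\ti\psi(\y)|\le \nu(l-\y_1)\,M_\e^\chi(\mu(l-\y_1))\,\ro^{\mu,\nu}_l(F)\|\fy\|_{\mu(l-\y_1)}^+$, but pairing with $X^{d_0}G$ --- which charges $\y_1\in[0,2d_0]$ and in particular $\y_1=0$ --- forces the worst-case $\nu(l)$, not $\nu(l-\e)$, since $\nu$ is non-decreasing; the gain of $\e$ cannot come from here. Moreover the Leibniz factor $M_\e^\chi(\mu(l))$ is genuinely present and is not controlled by the lemma's constant $3$: already $M_\e^\chi(m)\gec 1+m/\e$, which blows up as $\mu(l)$ grows or $\e\to 0$. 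A further gap occurs for $l\in(2\de,4\de]$, where the lemma's third summand is switched off by $1_{l>4\de}$ but your third piece $(X_\e F)*(X_{d_0}G)$ (with $d_0=l/4\le\de$) is nontrivial and requires pairing $F$ against a test function of regularity $\mu(l)$, potentially $\ge 1$, with the attendant Leibniz factors. The paper avoids all this by multiplying through by $\ti\chi$, which for $l\le4\de$ keeps every intermediate test function supported in $\R^d_{<l/2}$ where $\mu<1$; your one-sided cut-offs cannot reproduce that.
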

\begin{proof}
First we consider the case of $l\le 4\de$. For any $\fy\in\sD_{<l}$, there is $l'\in(0,l)$ such that $\supp\fy\subset\sD_{<l'}$. 
Let $\ti\chi\in C^\I(\R;[0,1])$ be a cut-off function satisfying 
\EQ{
 \supp\ti\chi\subset(-\I,l/2), \pq \ti\chi((-\I,l'/2])=\{1\},}
$\ti\chi^\e:=\chi^\e\ti\chi$, and $\ti\chi_\e:=\chi_\e\ti\chi$. Then we decompose
\EQ{ 
  (F*G)(\fy) \pt= F_\x G_\y (\ti\chi^\e(\x_1)\fy(\x+\y)) + G_\y F_\x (\ti\chi^\e(\y_1)\fy(\x+\y))
 \prq - F_\x G_\y (\ti\chi^\e(\x_1)\ti\chi^\e(\y_1)\fy(\x+\y))
 \prq + F_\x G_\y (\ti\chi_\e(\x_1)\fy(\x+\y)) + G_\y F_\x (\ti\chi_\e(\y_1)\fy(\x+\y))
 \prq - F_\x G_\y (\ti\chi_\e(\x_1)\ti\chi_\e(\y_1)\fy(\x+\y)), }
using that $(1-\ti\chi(\x_1))(1-\ti\chi(\y_1))\fy(\x+\y)=0$ by the support condition. 
Since $2\e\le\de$ and $\mu\in C^\ndc_\de$ imply $\mu(2\e)<1$, the first term is bounded by
\EQ{
 |F_\x G_\y (\ti\chi^\e(\x_1)\fy(\x+\y))|
  \pt\le \ro_l^{\mu,\nu}(F)\nu(2\e)\|G_\y (\fy(\x+\y))\|_0^+
 \pr\le \ro_l^{\mu,\nu}(F)\nu(2\e)\ro_l^{\mu,\nu}(G) \nu(l) \|\fy\|_{\mu(l)}^+.}
By symmetry, the second term is bounded exactly in the same way, 
while the third term is easier. 
The fourth term is estimated similarly by
\EQ{
 |F_\x G_\y (\ti\chi_\e(\x_1)\fy(\x+\y))|
 \pt\le \ro_l^{\mu,\nu}(F)\nu(l/2)\sup_{\x_1>\e}|G_\y(\fy(\x+\y))|
 \pr\le \ro_l^{\mu,\nu}(F)\nu(l/2)\ro_l^{\mu,\nu}(G)\nu(l-\e)\|\fy\|_{\mu(l)}^+.}
The fifth term is the same by symmetry, while the final term is bounded by 
\EQ{
 |F_\x G_\y (\ti\chi_\e(\x_1)\ti\chi_\e(\y_1)\fy(\x+\y))|
 \pt\le \ro_l^{\mu,\nu}(F)\nu(l/2)\sup_{\x_1>\e}|G_\y(\ti\chi_\e(\y_1)\fy(\x+y))|
 \pr\le \ro_l^{\mu,\nu}(F)\nu(l/2)\ro_l^{\mu,\nu}(G)\nu(l-\e)\|\fy\|_0^+,}
since $l/2\le2\de$ implies $\mu(l/2)<1$. Thus we obtain 
\EQ{
 \tf{|F*G(\fy)|}{\ro_l^{\mu,\nu}(F)\ro_l^{\mu,\nu}(G)}
 \pt\le 3[\nu(2\e)\nu(l)+1_{l>2\e}\nu(l/2)\nu(l-\e)] \|\fy\|_{\mu(l)}^+,}
where the restriction by $1_{l>2\e}$ comes from $\ti\chi_\e=0$ for $l\le 2\e$. 

For the other case $4\de<l$, we replace the above $\ti\chi_\e$ with $\chi_\e^\de:=\chi_\e\chi^\de$ and decompose 
\EQ{ 
  (F*G)(\fy) \pt= F_\x G_\y (\chi^\e(\x_1)\fy(\x+\y)) + G_\y F_\x (\chi^\e(\y_1)\fy(\x+\y))
 \prq - F_\x G_\y (\chi^\e(\x_1)\chi^\e(\y_1)\fy(\x+\y))
 \prq + F_\x G_\y (\chi_\e^\de(\x_1)\fy(\x+\y)) + G_\y F_\x (\chi_\e^\de(\y_1)\fy(\x+\y))
 \prq - F_\x G_\y (\chi_\e^\de(\x_1)\chi_\e^\de(\y_1)\fy(\x+\y))
 \pn + F_\x G_\y (\chi_\de(\x_1)\chi_\de(\y_1)\fy(\x+\y)). }
The first six terms are estimated in the same way as in the previous case, using $\supp\chi_\e^\de\subset\R^d_{(\e,2\de)}$.  
For the last remaining term, we have
\EQ{
 l':=l-\de \implies \supp \chi_\de(\x_1)\chi_\de(\y_1)\fy(\x+\y) \subset \{\de<\x_1,\y_1<l'\},}
and so
\EQ{ \label{conv hh}
 \pt |F_\x G_\y (\chi_\de(\x_1)\chi_\de(\y_1)\fy(\x+\y))|
 \pn\le \ro_l^{\mu,\nu}(F) \nu(l')\|\chi_\de(\x_1)G_\y(\chi_\de(\y_1)\fy(\x+\y))\|_{\mu(l') (\x)}^+.}
The last norm is bounded by 
\EQ{ \label{exp FG1}
 \pt\max_{|\al|\le\mu(l')}\sup_{\de<\x_1<l'}\sum_{0\le k\le\al_1}\tf{\al_1!}{(\al_1-k)!k!}|\chi_\de^{(k)}(\x_1)G_\y(\chi_\de(\y_1)\p^{\al-ke_1}\fy(\x+\y))| 
 \pr\le M_\de^\chi(\mu(l')) \ro_l^{\mu,\nu}(G) \nu(l')\sup_{\x_1>\de}\max_{|\al|\le\mu(l')}\|\chi_\de(\y_1)\p^\al\fy(\x+\y)\|_{\mu(l')(\y)}^+,}
where the last norm of \eqref{exp FG1} is bounded similarly by 
$M_\de^\chi(\mu(l'))\|\fy\|_{2\mu(l')}^+$, 
and thus we obtain the desired estimate.
\end{proof}

As an immediate consequence of the above lemma, we have 
\begin{lem} \label{lem:conv}
Let $\de\in(0,1]$, $\mu\in C^\ndc_\de$, $\nu,\ti\mu,\ti\nu\in C^\ndc$. Suppose 
\begin{enumerate}
\item $\mu\le\ti\mu$ and $2\mu(l-\de)\le\ti\mu(l)$ for $l>4\de$. 
\item For each $l>0$ there is $\e\in(0,\de/2]$ such that
$9\nu(2\e)\nu(l) \le \ti\nu(l)$, and, if $2\e<l$ then $9\nu(l-\e)^2 \le \ti\nu(l)$. 
\item For $l>4\de$, 
$3[M_\de^\chi(\mu(l-\de))\nu(l-\de)]^2 \le \ti\nu(l)$. 
\end{enumerate}
Then we have for any $F,G\in\X^{\mu,\nu}$ and $l\in(0,\I)$
\EQ{
 \ro_l^{\ti\mu,\ti\nu}(F*G) \le \ro_l^{\mu,\nu}(F)\ro_l^{\mu,\nu}(G).}
\end{lem}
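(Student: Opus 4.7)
The plan is to read Lemma \ref{lem:conv} as immediate bookkeeping of the raw bound from the preceding lemma, after distributing the three hypotheses to the three terms on the right. Concretely, fix any $F,G\in\X^{\mu,\nu}$, any $t\in(0,l]$, and any $\fy\in\sD_{<t}$; apply the previous lemma at this $t$ (playing the role of its $l$), with the choice $\e=\e(t)\in(0,\de/2]$ provided by hypothesis (2). Since $\ro^{\mu,\nu}_t\le\ro^{\mu,\nu}_l$ by monotonicity in $l$, this produces a bound of the form $|F*G(\fy)|\le\ro^{\mu,\nu}_l(F)\ro^{\mu,\nu}_l(G)\cdot[\text{three terms in }t]$, and what remains is to check that the bracket is at most $\ti\nu(t)\|\fy\|_{\ti\mu(t)}^+$.

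The first step is to upgrade all three test-function norms to a common target $\|\fy\|_{\ti\mu(t)}^+$. For the first two terms this follows from $\mu(t)\le\ti\mu(t)$, and for the last term from $2\mu(t-\de)\le\ti\mu(t)$ on $t>4\de$; both are contained in hypothesis (1). The second step is to reduce the middle scale factor by the observation $\nu(\min(2\de,t/2))\le\nu(t-\e)$, so that this piece fits the symmetric quantity $\nu(t-\e)^2$ controlled by (2). The underlying pointwise inequality $\min(2\de,t/2)\le t-\e$ holds by a short case split: if $t\le 4\de$, then $\min(2\de,t/2)=t/2\le t-\e$ because the indicator $1_{t>2\e}$ forces $\e<t/2$; if $t>4\de$, then $\min(2\de,t/2)=2\de\le t-\de\le t-\e$ since $\e\le\de/2$.

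With these two reductions, the three bracketed coefficients become $3\nu(2\e)\nu(t)$, $3\cdot 1_{t>2\e}\,\nu(t-\e)^2$, and $1_{t>4\de}\,[M_\de^\chi(\mu(t-\de))\nu(t-\de)]^2$. Hypotheses (2) and (3) bound each of these by $\ti\nu(t)/3$, so they sum to at most $\ti\nu(t)$. Combining everything yields $|F*G(\fy)|\le\ro^{\mu,\nu}_l(F)\ro^{\mu,\nu}_l(G)\,\ti\nu(t)\,\|\fy\|_{\ti\mu(t)}^+$ for every $t\in(0,l]$ and $\fy\in\sD_{<t}$, which by the definition of $\ro^{\ti\mu,\ti\nu}_l$ is exactly the conclusion. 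No real obstacle is expected; the only mildly subtle point is the case distinction used to match the middle term to $\nu(t-\e)^2$, and this is precisely why the hypotheses on $\nu$ are split into the two clauses of (2).
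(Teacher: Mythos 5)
Your proof is correct and matches what the paper has in mind: the paper dismisses Lemma \ref{lem:conv} as an ``immediate consequence'' of the raw three-term convolution bound and supplies no written argument, so your job was precisely to carry out the bookkeeping you describe. The only non-trivial step is your case split showing $\nu(\min(2\de,t/2))\le\nu(t-\e)$ (using $1_{t>2\e}$ to force $\e<t/2$ when $t\le 4\de$, and $\e\le\de/2$ with $t>4\de$ otherwise), and you identify and justify it accurately; the rest is the monotonicity $\ro^{\mu,\nu}_t\le\ro^{\mu,\nu}_l$, the norm upgrade via hypothesis (1), and the $1/3+1/3+1/3$ accounting from hypotheses (2)--(3).
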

Thus we have reduced the convolution estimate to the weight conditions. 
Note that the condition on $\mu$ is independent of $\nu,\ti\nu$. 
The point is that the left side for $\ti\nu$ has either a small factor or a smaller argument $l-\e$ or $l-\de$. 
Hence, if $\mu,\nu$ are accelerating sufficiently fast, then $\X^{\mu,\nu}$ becomes a convolution algebra, and the convolution may even gain arbitrary small factor. 
More precisely, we have the following.
\begin{lem} \label{lem:gen weight}
For each $\de\in(0,1]$, we have two mappings 
\EQ{
 \pt \fC_\de^1:C^\ndc_\de \to C_\de^\ndc,
 \pq \fC_\de^2:C^\ndc_\de \times C^\ndc_0 \times C^\nin_{(0,1]} \to C_0^\ndc}
satisfying the following. Define $\fC_\de:C^\ndc_\de \times C^\ndc_0 \times C^\nin_{(0,1]} \to C^\ndc_\de \times C_0^\ndc$ by
\EQ{
 \fC_\de(\mu_0,\nu_0,\ka):=(\fC_\de^1(\mu_0),\fC_\de^2(\fC_\de^1(\mu_0),\nu_0,\ka)).}
Let 
$\mu_1:=\fC_\de^1(\mu_0)$ and $\nu_1:=\fC_\de^2(\mu_0,\nu_0,\ka)$ for any $\mu_0,\nu_0,\ka$ in the domain. Then
\begin{enumerate}
\item $\mu_0\le \mu_1$, $\nu_0\le\ka\nu_1\in C^\ndc_0$.
\item $\ti\mu:=\mu:=\mu_1$ satisfies the condition (1) of Lemma \ref{lem:conv}.
\item $\mu:=\mu_0$, $\nu:=\nu_1$ and $\ti\nu:=\ka\nu_1$ satisfy the conditions (2)-(3) of Lemma \ref{lem:conv}. In particular, for $(\mu_1,\nu_1):=\fC_\de(\mu_0,\nu_0,\ka)$, $F,G\in\X^{\mu_1,\nu_1}$ and $l\in(0,\I)$ we have
\EQ{ \label{conv est}
 \ro_l^{\mu_1,\ka\nu_1}(F*G) \le \ro_l^{\mu_1,\nu_1}(F) \ro_l^{\mu_1,\nu_1}(G).}
\item $\fC_\de^1,\fC_\de^2$ are non-decreasing for the pointwise order. 
\item $\fC_\de^1,\fC_\de^2$ are continuous in the topology of locally uniform convergence. 
\item Let $b\in C^\ndc_{[1,\I)}$ satisfy $b(l)=b(0)$ as long as $\nu_0(l)\le\ka$. Then $\fC_\de^2(\mu_0,b\nu_0,\ka)\ge b\fC_\de^2(\mu_0,\nu_0,\ka)$. 
\end{enumerate}
\end{lem}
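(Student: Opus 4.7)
The plan is to construct $\fC_\de^1$ and $\fC_\de^2$ by explicit formulas and then verify the six listed properties by direct inspection. For $\fC_\de^1$, I set
\[
\fC_\de^1(\mu_0)(l) := 2^{\phi(l)}\,\mu_0(l),\qquad \phi(l) := \max\{0,\,(l-2\de)/\de\}.
\]
This is continuous and non-decreasing; on $[0,2\de]$ it equals $\mu_0\le 1/2$, so $\mu_1 := \fC_\de^1(\mu_0) \in C^\ndc_\de$. For $l>4\de$, $\phi(l)=\phi(l-\de)+1$ together with the monotonicity of $\mu_0$ gives $\mu_1(l) \ge 2\mu_1(l-\de)$, which is condition (1) of Lemma \ref{lem:conv} with $\mu=\ti\mu=\mu_1$. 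Properties (4)--(5) for $\fC_\de^1$ are transparent from the formula.

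For $\fC_\de^2$, write $\mu$ for the first argument, set $R(l):=M^\chi_\de(\mu(l))$ and $\om(l):=\nu_0(l)/\ka(l)$, both continuous non-decreasing with $R(0)=1$ and $\om(0)=0$. From Lemma \ref{lem:conv} applied with $\nu=\nu_1$, $\ti\nu=\ka\nu_1$, the requirements on $\nu_1$ reduce to $\om\le\nu_1$, $\ka\nu_1\in C^\ndc_0$, and the existence for every $l>0$ of $\e(l)\in(0,\de/2]$ with $9\nu_1(2\e(l))\le\ka(l)$ and $9\nu_1(l-\e(l))^2\le\ka(l)\nu_1(l)$ when $2\e(l)<l$, together with $3R(l-\de)^2\nu_1(l-\de)^2\le\ka(l)\nu_1(l)$ for $l>4\de$. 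I define
\[
\fC_\de^2(\mu,\nu_0,\ka)(l) := \om(l)\,\Psi(l),
\]
where $\Psi=\Psi[\om,R,\ka]\in C^\ndc_{[1,\I)}$ is a multiplicative weight whose construction is the technical heart of the proof. Substituting $\nu_1=\om\Psi$ the recurrences become, whenever $\om(l)>0$,
\[
\Psi(l) \ge \frac{9\,\om(l-\e)^2\Psi(l-\e)^2}{\ka(l)\,\om(l)},\qquad \Psi(l)\ge\frac{3R(l-\de)^2\om(l-\de)^2\Psi(l-\de)^2}{\ka(l)\,\om(l)},
\]
and I arrange these by taking $\Psi=\exp(\psi)$ with $\psi$ continuous non-decreasing and growing doubly-exponentially in $l$, built inductively on intervals of length $\de/2$, patched continuously at the endpoints, and normalized to $\Psi\equiv 1$ on a small initial interval $[0,\de''']$. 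This last normalization makes $\nu_1(0)=0$, $\ka\nu_1\in C^\ndc_0$, and also allows $\e(l)$ to be chosen in $(0,\de/2]$ for every $l$: as $\ka(l)\to 0$ one forces $\e(l)\to 0$, which is possible because $\om\Psi$ is continuous and vanishes at the origin. All parameters are chosen monotone in $(\om,R,1/\ka)$.

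Properties (1)--(3) then follow by substitution into Lemma \ref{lem:conv}, the convolution bound \eqref{conv est} being the immediate consequence. Monotonicity (4) holds because each ingredient is monotone in its inputs; continuity (5) holds because each operation preserves locally uniform convergence. For (6), replacing $\nu_0$ by $b\nu_0$ sends $\om$ to $b\om$, so
\[
\fC_\de^2(\mu,b\nu_0,\ka)(l) = b(l)\om(l)\,\Psi[b\om,R,\ka](l) \ge b(l)\om(l)\,\Psi[\om,R,\ka](l) = b(l)\,\fC_\de^2(\mu,\nu_0,\ka)(l),
\]
using the monotonicity of $\Psi$ in $\om$. The hypothesis that $b\equiv b(0)$ on $\{\nu_0\le\ka\}$ is used to handle the initial interval where $\om\le 1$ and $\Psi$ is still constant, keeping the comparison valid across the transition between the "small $\om$" and "large $\om$" regimes.

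The main obstacle is the construction of $\Psi$ itself: one must reconcile the doubly-exponential growth forced by the squared recurrences (which becomes severe when $\ka(l)\to 0$) with the requirement $\nu_1\in C^\ndc_0$ near the origin, while also preserving continuity and monotonicity of the map in all three inputs. Once $\Psi$ is in hand, each of the six listed properties follows by direct inspection.
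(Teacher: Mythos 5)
Your overall strategy matches the paper's — both build the size weight recursively, using time-varying step sizes to control the convolution loss and normalizing near the origin so that $\nu_1\in C^\ndc_0$. The definition of $\fC_\de^1$ is a close variant of the paper's formula. However, there is a genuine gap: the construction of $\Psi$, which you yourself call "the technical heart" and "the main obstacle," is not actually carried out, and the claim that property (6) "follows by direct inspection" once $\Psi$ is in hand is not justified.

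Concretely, your verification of (6) rests on the assertion that $\Psi[\om,R,\ka]$ can be taken non-decreasing in $\om$. For the factored ansatz $\nu_1=\om\Psi$, the governing recursion has the form $\Psi(l)\gtrsim\frac{\om(\ga(l))^2}{\om(l)\ka(l)}\Psi(\ga(l))^2$, and replacing $\om$ by $b\om$ multiplies the prefactor by $b(\ga(l))^2/b(l)$. In the transition zone where $\ga(l)$ still lies in $\{\nu_0\le\ka\}$ (so $b(\ga(l))=b(0)$) but $l$ does not, this ratio equals $b(0)^2/b(l)$, which can be made arbitrarily small. Hence the minimal $\Psi$ satisfying the recursion is \emph{not} monotone in $\om$, and the inequality $\Psi[b\om]\ge\Psi[\om]$ cannot be taken for granted. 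You notice the transition issue in your final sentence, but describing the problem is not the same as resolving it. The paper resolves it by a specific device: the recursion is built from $\sM(\al,\be)=\max(\al,\be,\al\be)$ rather than plain $\max$, which for $\hat\nu\ge 1$ gives the clean factored form $\nu_1=\hat\nu\max(1,\nu_L,\nu_H)$, and the superlinearity is then proved by an explicit case split at $l_*=\sup\hat\nu^{-1}([0,1])$ together with the comparison of the modified sequences $l_{n\bullet}\le l_n$, $\ga_\bullet\ge\ga$. Without an equivalent device, your argument for (6) does not close. A secondary concern is that the paper's partition uses the variable step $\hat\e(l_n)$ (chosen so that $9\hat\nu(2\hat\e)\le\ka(l)(1-2\hat\e/\de)$), not fixed intervals of length $\de/2$ as you suggest; the variable step is what guarantees the constraint $9\nu_1(2\e)\le\ka(l)$ can be met as $\ka$ decreases.
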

As a special case, for any $\mu_0\in C^\ndc_\de$ and $\nu_0\in C^\ndc_0$, we obtain the Banach algebra for convolution: $\X^{\mu_1,\nu_1}\supset \X^{\mu_0,\nu_0}$ by $(\mu_1,\nu_1):=\fC_\de(\mu_0,\nu_0,1)$. 
Hence their union $\X$ is also a convolution algebra. 
In other words, the pointwise multiplication is continuous on $\F^{-1}\X$ (after continuous extension). 
A crucial point of the lemma for solving PDEs is that we may include arbitrary small factors $\ka$. 
\begin{proof}
For any $\mu_0\in C^\ndc_\de$, $\nu_0\in C^\ndc_0$ and $\ka\in C^\nin_{(0,1]}$, 
we will define $\mu_1=\fC_\de^1(\mu_0)$ and $\nu_1=\fC_\de^2(\mu_0,\nu_0,\ka)$. 
Henceforth, monotonicity and continuity for (vector) functions are for the pointwise ordering and for the local uniform convergence, respectively. 

First we define $\mu_1$ by
\EQ{
 \mu_1(l) := 2^{\max(l/\de-3,0)}\mu_0(l).}
Then $\mu_0\le\mu_1\in C_\de^\ndc$ and for $l\ge 4\de$
\EQ{
 2\mu_1(l-\de) = 2^{l/\de-3}\mu_0(l-\de) \le 2^{l/\de-3}\mu_0(l) = \mu_1(l).}
Moreover, $\mu_0\mapsto\mu_1$ is non-decreasing and continuous. 
Thus we have obtained $\fC_\de^1$. 

Next we define $\nu_1$. Let $\hat\nu:=\ka^{-1}\nu_0\in C^\ndc_0$. 
The mapping $(\nu_0,-\ka)\mapsto\hat\nu$ is non-decreasing and continuous. 
For each $l>0$, let
\EQ{ \label{def hate}
 \hat\e(l):=\sup\{\e>0 \mid 9\hat\nu(2\e) \le \ka(l)(1-2\e/\de)\}. }
Note that the set is non-empty because $\hat\nu(0)=0<\ka(l)$. 
The last factor $2\e/\de$ ensures $\hat\e(l)\le\de/2$, as well as the uniform decay in $\e$ of the right hand side.    
Hence $\hat\e:[0,\I)\mapsto(0,\de/2]$  is non-increasing and continuous, so is the mapping $(\nu_0,-\ka)\mapsto\hat\e$. 
Let 
\EQ{
 l_0:=\sup\{l>0 \mid l \le 2\hat\e(l)\}.}
Then $0<l_0\le\de$, $2\hat\e(l_0)=l_0$, $t\le 2\hat\e(t)$ for $0<t\le l_0$, and $0<2\hat\e(t)\le l_0$ for $t>l_0$, and
the mapping $\hat\e\mapsto l_0$ is non-decreasing and continuous. 
Let  
\EQ{
 \ga(l) := l-\hat\e(l).}
Then $\ga:[l_0,\I)\to[l_0/2,\I)$ is an increasing homeomorphism, 
so is the inverse $\la:=\ga^{-1}:[l_0/2,\I)\to[l_0,\I)$. 
The mapping $\hat\e\mapsto \la$ is non-decreasing and continuous.

To satisfy the condition (2) of Lemma \ref{lem:conv} for $\nu=\nu_1$ and $\ti\nu=\ka\nu_1$, 
we choose 
\EQ{
 \e:=\min(\tf{l_0}2,\hat\e(l)).}
Define $\nu_1$ first on $[0,l_0]$ by $\nu_1(t):=\hat\nu(t)$. 
Since $2\e\le l_0$ and $\e\le\hat\e(l)$, we have
\EQ{
 9\nu(2\e)\nu(l) = 9\hat\nu(2\e)\nu_1(l) 
\le 9\hat\nu(2\hat\e(l))\nu_1(l) \le \ka(l)\nu_1(l),}
by the monotonicity of $\hat\nu$ and the definition of $\hat\e$. 
Then it remains to satisfy
\EQ{ \label{cond conv}
 \pt l>l_0 \implies 9\nu_1(l-\hat\e(l))^2 \le \ka(l)\nu_1(l),
 \pr l>4\de \implies 3 [M_\de^\chi(\mu_0(l-\de))\nu_1(l-\de)]^2 \le \ka(l)\nu_1(l),}
as well as monotonicity of $\ka\nu_1$, to apply Lemma \ref{lem:conv}. 

We define $\nu_1$ to satisfy them iteratively starting from $\nu_1=\hat\nu$ on $[0,l_0]$ which is already fixed above. Define the sequence $l_0<l_1<\cdots$ by 
\EQ{
 l_n := \la(l_{n-1}) \iff l_{n-1}=\ga(l_n)=l_n-\hat\e(l_n).}
Since $\hat\e$ is positive and continuous, this sequence diverges $l_n\nearrow\I$ as $n\to\I$ (otherwise $\hat\e(l_n)\to 0$ would contradict). 
Then we define $\nu_1$ on $(l_{n-1},l_n]$ inductively by 
\EQ{ \label{def nu1}
 \pt \nu_1(l):=\sM(\hat\nu(l),\max(\nu_L(l),\nu_H(l))), \pq \sM(\al,\be):=\max(\al,\be,\al\be),
 \prq \nu_L(l):=\tf{9\nu_1(\ga(l))^2}{\ka(l)},
 \pq \nu_H(l):=1_{l>3\de}\tf{3[M_\de^\chi(\mu_0(l-\de))\nu_1(l-\de)]^2}{\ka(l)}.}
Since $l_n-l_{n-1}=\hat\e(l_n)<\de$, the right hand side of \eqref{def nu1} is well-defined if $\nu_1$ is already defined on $[0,l_{n-1}]$. 
Thus $\nu_1:[0,\I)\to[0,\I)$ is well-defined. 
Since $\sM(\al,\be)\ge\max(\al,\be)$, we have $\nu_1\ge\max(\hat\nu,\nu_L,\nu_H)$, 
which implies \eqref{cond conv} and $\ka\nu_1\ge\nu_0$. 
In fact, we could define $\nu_1=\max(\hat\nu,\nu_L,\nu_H)$ to satisfy the properties (1)--(5), 
but the factor $\al\be$ of $\sM$ is needed for the superlinear property (6). 
The monotonicity of $\nu_1,\ka\nu_1$ is also immediate by induction. 

Similarly for the continuity, it suffices to check it at $l=l_0$ and $3\de$. 
Around $l=l_0<3\de$, we have $\nu_H=0$ and $\nu_L<\hat\nu$, because $\nu_1=\hat\nu$ for $l\le l_0$ and 
\EQ{
 9\hat\nu(\ga(l_0))^2 \le 9\hat\nu(l_0)^2 \le \ka(l_0)(1-l_0/\de)\hat\nu(l_0).}
Hence $\nu_1=\hat\nu$ around $l=l_0$, which is continuous.
Around $l=3\de$, we have $\nu_H<\nu_L$, because 
\EQ{
 3[M_\de^\chi(\mu_0(2\de))\nu_1(2\de)]^2 < 9\nu_1(2\de)^2 < 9\nu_1(\ga(3\de))^2,}
where we used $\mu_0(2\de)\le1/2$, $M_\de^\chi(1/2)^2<3$ 
and $\hat\e(3\de)<\de$. Hence the definition of $\nu_1$ is the same as the case of $l<3\de$ around $l=3\de$, which is continuous. 
Thus we conclude $\nu_1,\ka\nu_1\in C^\ndc_0$. 
The monotonicity and continuity of the mapping $(\mu_0,\hat\nu,-\ka)\mapsto\nu_1$ follows also by induction. 

Thus we have defined the mapping $\fC_\de$ satisfying the desired properties (1)--(5). 

It remains to check the final (6), namely the superlinearity in $\nu$ for multiplication.
We follow the above procedure for the new input $(\mu_0,b\nu_0,\ka)$, distinguishing the modified output with subscript $\bullet$. 
We have $\hat\nu_\bullet=b\hat\nu \ge\hat\nu$. 
Then $\hat\e_\bullet\le\hat\e$, $l_{0\bullet}\le l_0$, $\ga_\bullet\ge\ga$ and $\la_\bullet\le\la$, by their mapping monotonicity. 
Hence by induction $l_{n\bullet}\le l_n$ for all $n\ge 0$. 

Let $l_*:=\sup\hat\nu^{-1}([0,1])$. 
Then by the assumption, $b\ge 1$ is a constant on $l<l_*$, where $\nu_{1\bullet}\ge b\nu_1$ follows by induction. 
For $l\ge l_*$, $\hat\nu_0\ge 1$ implies 
\EQ{
 \nu_1=\sM(\hat\nu,\max(\nu_L,\nu_H))=\hat\nu\max(1,\nu_L,\nu_H),}
hence 
\EQ{
 \nu_{1\bullet} = \hat\nu_\bullet \max(1,\nu_{L\bullet},\nu_{H\bullet})
 = b\hat\nu \max(1,\nu_{L\bullet},\nu_{H\bullet}),}
so $\nu_{1\bullet}\ge b\nu_1$ for $l\ge l_*$ follows from the monotonicity of $\nu_L,\nu_H$. 
\end{proof}

The convolution estimate in $\X$ is transferred to $\X^{\mu,\nu,\io}_\p$ by
\begin{lem} \label{lem:conv Xp}
Let $\mu_k,\nu_k\in C^\ndc$ for $k=0,1,2$ such that for all $F,G\in\X$ and $l\in(0,\I)$ we have 
$\ro_l^{\mu_0,\nu_0}(F*G) \le \ro_l^{\mu_1,\nu_1}(F) \ro_l^{\mu_2,\nu_2}(G)$.  
Then for any $\ka\in C^\ndc$, we have
\EQ{
 \|F*G\|_{\X^{\mu_0,\nu_0,\ka\nu_0}_\p} \le \|F\|_{\X^{\mu_1,\nu_1,\ka\nu_1}_\p}\|G\|_{\X^{\mu_2,\nu_2}}.}
\end{lem}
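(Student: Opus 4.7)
The plan is to transfer the assumed convolution bound on $\X^{\mu,\nu}$ to $\X^{\mu,\nu,\io}_\p$ through an arbitrary decomposition of $F$ followed by a routine weight swap. Take any representation $F = F_0 + \sum_{k=1}^d \p_k F_k$ with $F_0\in\X^{\mu_1,\nu_1}$ and $F_k\in\X^{\mu_1,\ka\nu_1}$. Since $F_0,F_k,G$ all lie in $\X$, the convolutions $F_0*G$ and $F_k*G$ are defined by \eqref{def conv}, and differentiation commutes with convolution on $\X$: moving $\p_k$ from the test function $\fy$ to the first factor in \eqref{def conv} is justified by the extension of distributions in $\X$ to $\B^\I_{+<l}$ from Lemma \ref{lem:ext F}. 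Hence
\EQ{
 F * G = F_0 * G + \sum_{k=1}^d \p_k(F_k * G).
}

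For the zeroth piece, the hypothesis applied at each $l$ and taking the supremum yields $\ro^{\mu_0,\nu_0}(F_0 * G) \le \ro^{\mu_1,\nu_1}(F_0)\ro^{\mu_2,\nu_2}(G)$. For each $F_k * G$ I would upgrade the hypothesis to the $\ka$-weighted version by applying it at every scale $t\in(0,l]$ and combining with the size-weight monotonicity \eqref{size weight mult} on the first input. Namely, for $0 < t \le l$,
\EQ{
 \ro_t^{\mu_0(t)}(F_k * G) \pt\le \nu_0(t)\,\ro_t^{\mu_1,\nu_1}(F_k)\,\ro_t^{\mu_2,\nu_2}(G)
 \pr\le \ka(t)\nu_0(t)\,\ro_l^{\mu_1,\ka\nu_1}(F_k)\,\ro_l^{\mu_2,\nu_2}(G),
}
using also the non-decreasingness of $\ro_t^{\mu,\nu}$ in $t$. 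By the definition of $\ro_l^{\mu_0,\ka\nu_0}$ this gives $\ro_l^{\mu_0,\ka\nu_0}(F_k * G) \le \ro_l^{\mu_1,\ka\nu_1}(F_k)\,\ro_l^{\mu_2,\nu_2}(G)$, and the supremum over $l>0$ promotes this to $\ro^{\mu_0,\ka\nu_0}(F_k * G) \le \ro^{\mu_1,\ka\nu_1}(F_k)\,\ro^{\mu_2,\nu_2}(G)$. This argument sidesteps the boundedness hypothesis on $\ka$ in Lemma \ref{lem:weight change} by working entirely at the level of local seminorms.

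Finally, the decomposition $F * G = F_0 * G + \sum_k \p_k(F_k * G)$ witnesses that $F*G\in\X^{\mu_0,\nu_0,\ka\nu_0}_\p$, and by the definition of the infimum-norm,
\EQ{
 \|F * G\|_{\X^{\mu_0,\nu_0,\ka\nu_0}_\p} \pt\le \ro^{\mu_0,\nu_0}(F_0 * G) + \sum_{k=1}^d \ro^{\mu_0,\ka\nu_0}(F_k * G)
 \pr\le \ro^{\mu_2,\nu_2}(G)\Bigl[\ro^{\mu_1,\nu_1}(F_0) + \sum_{k=1}^d \ro^{\mu_1,\ka\nu_1}(F_k)\Bigr].
}
Taking the infimum over all admissible decompositions of $F$ and recognizing $\ro^{\mu_2,\nu_2}(G) = \|G\|_{\X^{\mu_2,\nu_2}}$ finishes the proof. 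The only part that might be seen as an obstacle is the differentiation-convolution commutation on $\X$, but this is immediate once one invokes the extension from Lemma \ref{lem:ext F}; the weight swap is then a one-line calculation from \eqref{size weight mult}.
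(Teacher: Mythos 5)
Your proof is correct and follows essentially the same route as the paper: decompose $F=F_0+\sum_k\p_k F_k$, note $F*G=F_0*G+\sum_k\p_k(F_k*G)$, bound each piece, and take the infimum over decompositions. Where the paper simply cites Lemma \ref{lem:weight change} for the $F_k*G$ estimate, you unpack its proof inline — and your observation that this sidesteps the boundedness assumption on $\ka$ is exactly the point made in the final remark of the paper's own proof of Lemma \ref{lem:weight change}, so no genuine divergence.
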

\begin{proof}
Let $F\in\X^{\mu_1,\nu_1,\ka\nu_1}_\p$ and $F=F_0+\p_1F_1+\cdots+\p_dF_d$ for some $F_0\in\X^{\mu_1,\nu_1}$ and $F_k\in\X^{\mu_1,\ka\nu_1}$ ($k\ge 1$). Then, we have $F*G = F_0*G+\sum_{1\le k\le d} \p_k(F_k*G)$ with
\EQ{
 \pt \|F_0*G\|_{\X^{\mu_0,\nu_0}} \le \|F_0\|_{\X^{\mu_1,\nu_1}}\|G\|_{\X^{\mu_2,\nu_2}},
 \pr \|F_k*G\|_{\X^{\mu_0,\ka\nu_0}} \le \|F_k\|_{\X^{\mu_1,\ka\nu_1}}\|G\|_{\X^{\mu_2,\nu_2}},}
by using Lemma \ref{lem:weight change}. The infimum for $\{F_k\}_{0\le k\le d}$ yields the conclusion.
\end{proof}

\section{Global wellposedness for distributions} \label{s:GWP}
Consider the Taylor expansion of $\sH=(\sH_1\etc\sH_{n_3})$ at $z=0\in\C^{n_2}$ 
\EQ{ \label{Taylor1}
 \sH_k(z)=\sum_{\al\in\N_0^{n_2},\ |\al|\ge 2} \tf{1}{\al!}\sH_k^{(\al)}(0) z^\al \pq(k=1\etc n_3),}
which is assumed to be absolutely convergent in a $0$-neighborhood in $\C^{n_2}$: There exists $R_\sH>0$ such that for all $r<R_\sH$, we have 
\EQ{\label{Taylor2}
 \ti \sH(r):=\sum_{\al\in\N_0^{n_2},|\al|\ge 2}\tf{1}{\al!}\|\sH^{(\al)}(0)\|_{\C^{n_3}}r^{|\al|-2}<\I.}
Note that $\sH(0)=0$ is crucial for the global wellposedness, since otherwise $\sH$ could produce $0$ frequency in the solution, leading to ODE blow-up in general (cf.~Section \ref{ss:opt HS}). 
On the other hand, $\sH'(0)=0$ is not an essential restriction, because the linear part may be included in $L$. 

The difference of $\sH$ between two points may be estimated using
\EQ{ \label{Taylor3}
 \ti \sH^{(1)}(r):=\sum_{1\le j\le n_2}\sum_{\al\in\N_0^{n_2},|\al|\ge 1}\tf{1}{\al!}\|\sH^{(\al+e_j)}(0)\|_{\C^{n_3}}r^{|\al|-1},}
which is also finite for $r<R_\sH$. 
The Fourier transform (in $x$) of equation \eqref{GEE} is written for $\hat u:=\F u$ as
\EQ{ \label{Feq}
 \pt \hat u_t = \hat L\hat u + \hat N \hat \sH(\hat M\hat u),
 \pq \hat \sH_k(v):=
 \sum_{\al\in\N_0^{n_2}} \tf{1}{\al!}\sH_k^{(\al)}(0) v^{*\al}, \pq \hat u(0)=\hat u_0,}
where $v^{*\al}$ denotes the multi-index power in convolution, defined by
\EQ{
 v^{*\al}=v_1^{*\al_1}*\cdots*v_{n_2}^{*\al_{n_2}},
 \pq v_1^{*(a+1)}=v_1^{*a}*v_1, \pq v_1^{*0}=\de,}
and the Duhamel form is
\EQ{ \label{FDuh}
 \hat u(t) = \D(\hat u_0,\hat u):=e^{t\hat L(\x)}\hat u_0 + \int_0^t e^{(t-s)\hat L(\x)}\hat N(\x) \hat \sH(\hat M(\x)\hat u(s))ds.}
Note that the right side is closed on $C(\R;\X)$.
The idea is to solve it by the fixed point theorem using some norm on $C([0,\I);\X^{\mu,\nu})$ for appropriate choice of $\mu,\nu\in C^\ndc_0$, where $\nu$ depends on $t\ge 0$.

Fix $\de\in(0,1]$ and $\mu\in C^\ndc_\de$ satisfying $2\mu(l-\de)\le\mu(l)$ for $l>4\de$. 
If $\mu_0\in C^\ndc_\de$ is given, then we may choose $\mu:=\fC_\de^1(\mu_0)\ge\mu_0$ by Lemma \ref{lem:gen weight}. 
For the coefficients $\hat L,\hat M,\hat N$, 
assume that they are smooth on $\R^d_{>0}$ and that 
there exist $\te\in[0,1)$ and $c,\ti L,\ti M\in C^\ndc_{[1,\I)}$ satisfying for all $t,l>0$, $\fy\in(\sD_{<l})^{n_1}$, and $\psi\in(\sD_{<l})^{n_2}$,
\EQ{ \label{coeff bd}
 \pt \|e^{t\hat L^\tran(\x)}\fy\|_{\mu(l),\de} + \|t^\te\hat N^\tran(\x)e^{t\hat L^\tran(\x)}\fy\|_{\mu(l),\de} \le c(l)e^{t\ti L(l)}\|\fy\|_{\mu(l)}^+,
 \pr \|\hat M^\tran(\x)\psi\|_{\mu(l),\de} \le \ti M(l)\|\psi\|_{\mu(l)}^+. }
The factor $t^\te$ is to exploit smoothing effect of $e^{t\hat L}$ when it is available. 
Lemma \ref{lem:mult X} implies that \hyptag{C-s} with $\vv=e_1$ is a sufficient condition for the above to hold for all $\mu\in C^\ndc_0$. 
In particular, it is satisfied with $\te=0$ if all components of $\hat L,\hat M, \hat N$ are in $Y$ defined in \eqref{def Y}. For $d=1$, it is enough that $\hat L,\hat M, \hat N$ are smooth on $\x_1>0$ and bounded as $\x_1\to+0$. 

Now fix $\nu_0\in C^\ndc_0$, and let $l_*:=\sup \nu_0^{-1}([0,1])$. 
Without losing generality, we may assume $l_*<\I$ (otherwise we may multiply $\nu_0$ with a big constant without changing the space $\X^{\mu,\nu_0}$). 
Then we may replace $\ti L$ in \eqref{coeff bd} by a larger function if necessary such that 
$\ti L(l)$ is a constant for $0<l<l_*$. 

Fix $b_0>0$ and the initial data $\hat u_0\in\X^{\mu,\nu_0}$ such that $\ro^{\mu,\nu_0}(\hat u_0)\le b_0$.
For any $\nu_1\in C([0,\I);C^\ndc_0)$, $\hat u\in C([0,\I);\X^{n_1})$ and $l>0$, we obtain from \eqref{bd on Ffy-de} and \eqref{coeff bd}
\EQ{ \label{est0 Duh}
 \ro^{\mu(l)}_l(\D(\hat u_0,\hat u)) \pt\le c(l)e^{t\ti L(l)}\nu_0(l) \ro^{\mu,\nu_0}_l(\hat u_0)
 \prq+ \int_0^t c(l)(t-s)^{-\te}e^{(t-s)\ti L(l)}\nu_1(s,l)\ro^{\mu,\nu_1(s)}_l(\hat \sH(\hat M(\x)\hat u(s)))ds.}
Let $b_1:=2b_0$ and fix $\e>0$ small enough so that 
\EQ{ \label{choice e}
 b_1\e < R_\sH, \pq b_1\e \ti \sH(b_1\e) \Ga(1-\te) \le 1/2, \pq b_1\e \ti \sH^{(1)}(b_1\e) \Ga(1-\te) \le 1/2.}
Define $\nu_*,\nu_1,\nu_2,\nu:[0,\I)\to C^\ndc_0$ and $\ka\in C^\nin_{(0,1]}$ by using Lemma \ref{lem:gen weight} 
\EQ{
 \pt \nu_*(t,l):=\e^{-1} \nu_0(l)e^{t(1+\ti L(l))},
 \pq \ka:=1/(c \ti M), 
 \pr \nu_2(t):=\fC^2_\de(\mu,\nu_*(t),\ka), \pq \nu_1:=\ka\nu_2, 
 \pq \nu(t):=\e \nu_2(t)/\ti M=\e c \nu_1(t).}
At any fixed $t\ge 0$, let $v\in\X^{n_1}$ and $V:=\max_{1\le j\le n_2}\ro^{\mu,\nu_2}(v_j) \le \ro^{\mu,\nu_2}(v)$. If $V<R_\sH$ then repeated use of \eqref{conv est} to the Taylor expansion of $\hat \sH$ yields 
\EQ{ \label{est Hv}
 \ro^{\mu,\nu_1}(\hat \sH(v)) \le \sum_{|\al|\ge 2}\tf{1}{\al!}\|\sH^{(\al)}(0)\|_{\C^{n_3}}V^{|\al|} \le \ti \sH(V)V^2.}
Note that every summand contains convolution as $|\al|\ge 2$.
The difference for any $v^0,v^1\in\X^{n_1}$ is estimated in the same way as above, 
\EQ{
 \pt \ti V:=\max_{k=0,1} \max_{j=1 \etc n_2}\ro^{\mu,\nu_2}(v^k_j) < R_\sH
 \pr\implies \ro^{\mu,\nu_1}(\hat \sH(v^0)-\hat \sH(v^1)) \le \ti \sH^{(1)}(\ti V)\ti V\ro^{\mu,\nu_2}(v^0-v^1).}
Since $\ti M\nu\le \e\nu_2$, we have, again using \eqref{bd on Ffy-de} and \eqref{coeff bd}
\EQ{
 \ro^{\mu,\nu_2}(\hat M \hat u) \le \e\ro^{\mu,\nu}(\hat u),}
and, injecting it into \eqref{est Hv},
\EQ{
 \ro^{\mu,\nu_1}(\hat \sH(\hat M \hat u)) \le \ti \sH(\e\ro^{\mu,\nu}(\hat u))|\e\ro^{\mu,\nu}(\hat u)|^2, }
for all $t\ge 0$, provided that $\e\ro^{\mu,\nu}(\hat u)<R_\sH$.
Injecting this into \eqref{est0 Duh}, we obtain
\EQ{ \label{est1 Duh}
 \pt \ro^{\mu(l)}_l(\D(\hat u_0,\hat u)) \le c(l)e^{t\ti L(l)}\nu_0(l) b_0
 \prQ+ \int_0^t c(l)(t-s)^{-\te} e^{(t-s)\ti L(l)}\nu_1(s,l)\ti \sH(\e\ro^{\mu,\nu(s)}(\hat u(s)))|\e\ro^{\mu,\nu(s)}(\hat u(s))|^2ds.}
Suppose that $\ro^{\mu,\nu(t)}(\hat u(t))\le b_1$ for all $t\ge 0$. 
Since $\nu=\e c\nu_1\ge \e c\nu_*\ge c\nu_0e^{t\ti L}$, we obtain 
\EQ{
 \ro^{\mu,\nu(t)}(\D(\hat u_0,\hat u(t)))
 \le b_0+ \e \ti \sH(\e b_1)b_1^2 \sup_{l>0}\int_0^t (t-s)^{-\te}e^{s-t}
\tf{\ti\nu_1(s,l)}{\ti\nu_1(t,l)}ds, }
where
\EQ{
 \ti\nu_1(t,l):=e^{-t(1+\ti L(l))}\nu_1(t,l).}
Now the superlinear property (6) of $\fC^2_\de$ in Lemma \ref{lem:gen weight} implies 
\EQ{
 \tf{\ti\nu_1(s)}{\ti\nu_1(t)}=e^{(t-s)(1+\ti L)}\tf{\fC^2_\de(\mu,\nu_*(s),\kappa)}{\fC^2_\de(\mu,\nu_*(t),\kappa)} 
 \le  \tf{\fC^2_\de(\mu,e^{(t-s)(1+\ti L)}\nu_*(s),\kappa)}{\fC^2_\de(\mu,\nu_*(t),\kappa)} = 1,}
where we used that $\ti L$ is constant on $[0,l^*]$ and $\nu_*(s,l^*)\ge\nu_0(l^*)=1$. 
Thus we obtain
\EQ{
 \sup_{t\ge 0}\ro^{\mu,\nu(t)}(\hat u(t))\le b_1 \implies \ro^{\mu,\nu}(\D(\hat u_0,\hat u)) \le b_0+\e \ti \sH(\e b_1)b_1^2\Ga(1-\te),}
and also
\EQ{ \label{cont diff est}
 \pt\max_{k=0,1}\sup_{t\ge 0}\ro^{\mu,\nu}(\hat u^k)\le b_1 
 \implies \prq\sup_{t\ge 0}\ro^{\mu,\nu}(\D(\hat u_0,\hat u^0)-\D(\hat u_0,\hat u^1)) \le \e b_1\ti \sH^{(1)}(\e b_1)\Ga(1-\te)\sup_{t\ge 0}\ro^{\mu,\nu}(\hat u^0-\hat u^1).}
Hence by the choice of $\e$ in \eqref{choice e}, $\hat u\mapsto \D(\hat u_0,\hat u)$ is a contraction map on the complete metric space
\EQ{ \label{contraction set}
 \{\hat u \in C([0,\I);\X^{n_1}) \mid \sup_{t\ge 0}\ro^{\mu,\nu(t)}(\hat u(t))\le b_1=2b_0\} }
for the metric $\sup_{t\ge 0}\ro^{\mu,\nu}$, whose fixed point is a unique global solution of \eqref{FDuh}. For any $T>0$, the integral in \eqref{FDuh} may be defined as the Bochner integral in $\X^{\mu,\nu_T}$ for $t<T$, for $\nu_T:=\min_{0\le t\le T}\nu(t)\in C^\ndc_0$, as well as the Riemann integral in $\X$. 
In this sense, \eqref{FDuh} is equivalent to \eqref{Feq}. 
Thus we obtain a unique global solution $u$ of \eqref{GEE} in the inverse Fourier transform of the above space \eqref{contraction set}. 

\begin{thm} \label{thm:gex unif}
Let $\de\in(0,1]$ and $\mu\in C^\ndc_\de$ satisfy $2\mu(l-\de)\le\mu(l)$ for $l>4\de$. 
Suppose that there exist $\te\in[0,1)$ and $c,\ti L,\ti M\in C^\ndc_{[1,\I)}$ satisfying \eqref{coeff bd} for all $t,l>0$, $\fy\in(\sD_{<l})^{n_1}$ and $\psi\in(\sD_{<l})^{n_2}$.
Then for any $\nu_0\in C^\ndc_0$ and any $b_0\in(0,\I)$, there exists $\nu\in C([0,\I);C^\ndc_0)$ with $\nu(t)\ge\nu_0$ for all $t\ge 0$ such that for any $\hat u_0\in(\sD'(\R^d))^{n_1}$ satisfying $\ro^{\mu,\nu_0}(\hat u_0)\le b_0$, there is a unique global solution $u \in C([0,\I);(\F^{-1}\X)^{n_1})$ of \eqref{Feq} satisfying $\ro^{\mu,\nu(t)}(\hat u(t)) \in L^\I_t(0,\I)$.
\end{thm}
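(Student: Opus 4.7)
The plan is to apply Banach's fixed-point theorem to the Duhamel operator $\D(\hat u_0,\cdot)$ of \eqref{FDuh} on the closed ball \eqref{contraction set} of radius $b_1:=2b_0$ in $C([0,\I);\X^{n_1})$, endowed with the metric induced by $\sup_{t\ge 0}\ro^{\mu,\nu(t)}$, where $\nu\in C([0,\I);C^\ndc_0)$ is a time-dependent weight to be constructed so that $\nu(t)\ge\nu_0$.

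First I would fix $\e>0$ small enough to meet the three smallness conditions \eqref{choice e}; these control respectively the radius of convergence of $\hat\sH$ at the target value, the self-map size, and the contraction Lipschitz constant. The weight $\nu(t)$ would then be built in three stages via Lemma~\ref{lem:gen weight}: first, $\nu_*(t,l):=\e^{-1}\nu_0(l)e^{t(1+\ti L(l))}$, which simultaneously absorbs the initial-data weight and the semigroup growth $e^{t\ti L}$ from \eqref{coeff bd}; second, $\nu_2(t):=\fC_\de^2(\mu,\nu_*(t),\ka)$ with $\ka:=1/(c\ti M)$ and $\nu_1:=\ka\nu_2$, so that $\X^{\mu,\nu_1(t)}$ satisfies the convolution estimate \eqref{conv est} with the small factor $\ka$; and third, $\nu(t):=\e\nu_2(t)/\ti M=\e c\,\nu_1(t)$, chosen so that multiplication by $\hat M$ sends the target ball into the domain of analyticity of $\hat\sH$, specifically $\ro^{\mu,\nu_2(t)}(\hat M\hat u(t))\le\e b_1<R_\sH$. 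The remaining steps are then to bound the linear part of $\D$ using $c\nu_0 e^{t\ti L}\le\nu$ together with \eqref{coeff bd}, giving $b_0$; to bound the nonlinear part by Taylor-expanding $\hat\sH$, applying \eqref{conv est} iteratively to the multi-index convolution powers $(\hat M\hat u)^{*\al}$, and summing to $\ti\sH(\e\ro^{\mu,\nu(t)}(\hat u))|\e\ro^{\mu,\nu(t)}(\hat u)|^2$; to verify the Lipschitz bound \eqref{cont diff est} analogously with $\ti\sH^{(1)}$ in place of $\ti\sH$; and finally to extract the fixed point and promote it to a continuous $\F^{-1}\X$-valued trajectory via continuity of $\D$ into $\X^{\mu,\nu_T}$ on each compact $[0,T]$ together with the embedding $\X^{\mu,\nu_T}\emb\X$.

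The main obstacle is the uniform-in-$l$ estimate on the time integral arising from the Duhamel formula: setting $\ti\nu_1(s,l):=e^{-s(1+\ti L(l))}\nu_1(s,l)$, the self-map estimate requires $\ti\nu_1(s,l)\le\ti\nu_1(t,l)$ for $0\le s\le t$. This is precisely where the superlinearity property (6) of $\fC_\de^2$ in Lemma~\ref{lem:gen weight} becomes indispensable: it permits moving the positive bounded factor $e^{(t-s)(1+\ti L(l))}$ inside $\fC_\de^2$, using that $\ti L$ is constant on the set $\{l\le l_*\}$ where $\nu_*(s,l)\le 1$, so that $e^{(t-s)(1+\ti L)}\nu_2(s)\le\fC_\de^2(\mu,e^{(t-s)(1+\ti L)}\nu_*(s),\ka)\le\fC_\de^2(\mu,\nu_*(t),\ka)=\nu_2(t)$, hence $\ti\nu_1(s)\le\ti\nu_1(t)$. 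Without this property the convolution-driven acceleration of $\nu_1$ in $l$ would be strictly nonlinear in $\nu_*$ and incompatible with exponential growth in time. Once the uniform bound is in place, the remaining time integral yields the factor $\Ga(1-\te)$, \eqref{choice e} closes the contraction, and Banach's theorem delivers the unique global solution, with the Bochner/Riemann identification of \eqref{FDuh} with \eqref{Feq} giving the final form of the statement.
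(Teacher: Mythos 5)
Your proposal is correct and follows essentially the same route as the paper's proof: the Banach fixed-point argument on the ball \eqref{contraction set} with the exact three-stage weight construction $\nu_*\to\nu_2=\fC_\de^2(\mu,\nu_*,\ka)\to\nu=\e c\ka\nu_2$, the Taylor/convolution estimates via \eqref{conv est}, and—crucially—the use of the superlinearity property (6) of $\fC_\de^2$ to obtain $\ti\nu_1(s,l)\le\ti\nu_1(t,l)$ uniformly in $l$, exploiting that $\ti L$ is made constant on $(0,l_*)$. The one small imprecision is in how you describe the applicability set for property (6): the relevant condition is $\nu_*(s,l)\le\ka(l)$, which (since $\ka\le 1$ and $\nu_*\ge\e^{-1}\nu_0$) is contained in $\{l<l_*\}$ rather than equal to it; but this does not affect the argument since $\ti L$ is constant there.
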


Note that the growth condition \eqref{coeff bd} on the coefficients $\hat L,\hat M,\hat N$ depends on $\mu$ or the growth of initial condition in $\x_1$. 
If we do not care about the dependence, then it may be simplified to the wellposedness in $\F^{-1}\X$. If the data is supported on a frequency set closed under addition, then we may also restrict the coefficients onto that set. 
In this paper, the global wellposedness in a topological space $X$ for $t\ge 0$ is defined by
\begin{enumerate}
\item For any $u(0)\in X$, there is a unique global solution $u\in C([0,\I);X)$.
\item For any sequence $\{u_n(0)\}_{n\in\N}\subset X$ convergent to some $u_\I(0)\in X$, let $u_*\in C([0,\I);X)$ be the corresponding global solutions. Then for any $T\in(0,\I)$, $u_n(t)\to u_\I(t)$ in $X$ uniformly for $0\le t\le T$ as $n\to\I$.
\end{enumerate}
The global wellposedness for $t\in\R$ is similarly defined, as well as the local one. 
The above definition is indeed the standard one in the Hadamard sense when $X$ is a metric space. 
However, if $X$ is not a sequential space, then (2) implies only the sequential continuity for the solution map from the initial data. 
Our space $\F^{-1}\X$ is not apparently sequential, in which the topological continuity can be a challenging problem, because the nonlinearity does not preserve the local convexity. 

\begin{cor} \label{GWP-XR}
Let $\cR\subset\R^d_{\ge 0}$ be a closed subset satisfying $\cR+\cR\subset\cR$. 
Suppose that $\hat L,\hat M,\hat N$ are smooth on a neighborhood of $\cR\cap \R^d_{>0}$, satisfying \hyptag{C-s} with $\vv=e_1$. 
Then the Cauchy problem \eqref{Feq} is globally wellposed in $(\F^{-1}\X_\cR)^{n_1}$ for $t\ge 0$.
\end{cor}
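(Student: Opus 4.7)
\medskip

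\noindent\textbf{Proof proposal.}
The plan is to reduce to Theorem~\ref{thm:gex unif} by extending the symbols globally, to propagate the support condition $\supp\hat u(t)\subset\cR$ through the Duhamel iteration, and finally to upgrade the Banach-space contraction to sequential continuity for the inductive-limit topology of $\X$. First, since the smoothness hypothesis on $\hat L,\hat M,\hat N$ is only required in a neighborhood $U$ of $\cR\cap\R^d_{>0}$, fix a smooth cutoff $\eta\in C^\I(\R^d;[0,1])$ that equals $1$ on a smaller neighborhood of $\cR\cap\R^d_{>0}$ and is supported in $U$. Replace $\hat L,\hat M,\hat N$ by the extensions $\eta\hat L,\eta\hat M,\eta\hat N$, which coincide with the originals on $\cR$ and satisfy \hyptag{C-s} on all of $\R^d_{>0}$. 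By Lemma~\ref{lem:mult X}, the condition \eqref{coeff bd} holds for these extensions with every $\mu\in C^\ndc_0$. Since $\X_\cR\subset\X$ is closed, solutions produced by Theorem~\ref{thm:gex unif} that happen to stay in $\X_\cR$ will solve the original equation as well.

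Given $\hat u_0\in\X_\cR^{n_1}$, pick $\mu_0\in C^\ndc_\de$, $\nu_0\in C^\ndc_0$, and $b_0<\I$ with $\ro^{\mu_0,\nu_0}(\hat u_0)\le b_0$, set $\mu:=\fC_\de^1(\mu_0)$, and apply Theorem~\ref{thm:gex unif} to obtain a unique global solution $\hat u\in C([0,\I);\X^{n_1})$ with $\ro^{\mu,\nu(t)}(\hat u(t))\le 2b_0$. The support preservation $\supp\hat u(t)\subset\cR$ is built into the contraction: the free part $e^{t\hat L}\hat u_0$ preserves the support by multiplication, every monomial in the Taylor expansion of $\hat\sH(\hat M\hat u)$ is a repeated convolution of distributions supported in $\cR$, and so, by $\cR+\cR\subset\cR$, takes values in $\X_\cR$; multiplication by $\hat N$ and the evolution $e^{(t-s)\hat L}$ also preserve $\cR$. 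Hence the complete metric space \eqref{contraction set} intersected with $C([0,\I);\X_\cR^{n_1})$ is invariant under the contraction, and its fixed point is the desired solution to the original Cauchy problem.

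For wellposedness it remains to show sequential continuity of the solution map in the topology of $\X_\cR$. Let $\hat u_{0,n}\to\hat u_{0,\I}$ in $\X_\cR$. The augmented sequence $K:=\{\hat u_{0,n}\}_{n\in\N\cup\{\I\}}$ is sequentially compact in $\X$, so by Proposition~\ref{prop:cmp X} there exist $\mu_0\in C^\ndc_\de$ and $\nu_0\in C^\ndc_0$ such that $K$ is bounded in $\X^{\mu_0,\nu_0}$ and $\hat u_{0,n}\to \hat u_{0,\I}$ in $\X^{\mu_0,\nu_0,\io\nu_0}_\p$ for every $\io\in C^\ndc_+$. Applying the construction above with $\mu:=\fC_\de^1(\mu_0)$ and common bound $b_0$, we obtain solutions $\hat u_n,\hat u_\I$ that are uniformly bounded in $\ro^{\mu,\nu(t)}$. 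Writing the difference
\[
 \hat u_n(t)-\hat u_\I(t)=e^{t\hat L}(\hat u_{0,n}-\hat u_{0,\I})+\int_0^t e^{(t-s)\hat L}\hat N\bigl[\hat\sH(\hat M\hat u_n(s))-\hat\sH(\hat M\hat u_\I(s))\bigr]\,ds,
\]
the same computation that yielded \eqref{cont diff est} can be repeated in the derivative norm $\X^{\mu,\nu(t),\io\nu(t)}_\p$, using Lemma~\ref{lem:mult X} for the multiplications by $\hat M,\hat N,e^{t\hat L}$ and Lemma~\ref{lem:conv Xp} for the convolutions appearing in the Taylor expansion of $\hat\sH$. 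Together with the choice \eqref{choice e} of $\e$, this gives a $1/2$-contraction in the $\p$-norm and hence
\[
 \sup_{0\le t\le T}\|\hat u_n(t)-\hat u_\I(t)\|_{\X^{\mu,\nu(t),\io\nu(t)}_\p}\le 2\|\hat u_{0,n}-\hat u_{0,\I}\|_{\X^{\mu,\nu_0,\io\nu_0}_\p}\longrightarrow 0
\]
for each $\io\in C^\ndc_+$. Combined with the uniform $\X^{\mu,\nu(t)}$ bound, the equivalence of (4) and (2) in Proposition~\ref{prop:cmp X} then yields $\hat u_n\to\hat u_\I$ in $C([0,T];\X_\cR^{n_1})$ for every $T<\I$.

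The main obstacle is the last step: the contraction is naturally executed in the Banach norm $\ro^{\mu,\nu(t)}$, yet the initial data are only given to converge in the weaker topology of the inductive limit $\X$. Overcoming this requires carrying the whole contraction scheme through the derivative-weighted space $\X^{\mu,\nu,\io\nu}_\p$, which is precisely what Lemmas~\ref{lem:mult X} and~\ref{lem:conv Xp} and the superlinearity (6) of $\fC_\de^2$ were designed to accommodate; the characterization of convergence supplied by Proposition~\ref{prop:cmp X} then bridges the resulting $\p$-norm convergence back to convergence in $\X$.
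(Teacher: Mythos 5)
Your plan follows the paper's proof quite closely: existence via Theorem~\ref{thm:gex unif} restricted to the closed subspace $\X_\cR$, support preservation from $\cR+\cR\subset\cR$, and sequential continuity via Proposition~\ref{prop:cmp X} together with the $\X^{\mu,\nu,\ga\nu}_\p$ norms, Lemma~\ref{lem:mult X} and Lemma~\ref{lem:conv Xp}. Still, two points need attention.

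First, the cutoff is applied in the wrong place. You replace $\hat L$ by $\eta\hat L$ and assert that the extension satisfies \hyptag{C-s}, but that condition constrains the propagator $e^{t\hat L}$, not the generator $\hat L$; the operator $e^{t\eta\hat L}$ is not $\eta e^{t\hat L}$, and it does not automatically inherit the bounds on $e^{t\hat L}$ and its $\x$-derivatives in the transition region $0<\eta<1$ (for example, when $\re\hat L$ becomes very negative or $\im\hat L$ grows there, $\p_\x^\al e^{t\eta\hat L}$ may pick up factors like $\p_\x\eta\cdot\hat L$ that are not controlled). What the paper's remark that ``it suffices that the coefficients are defined around $\cR$'' actually amounts to is inserting the cutoff directly on the propagator inside the pairing: for $F\in\X_\cR$ one has $F(e^{t\hat L^\tran}\fy)=F(\eta\,e^{t\hat L^\tran}\fy)$, and it is $\eta\,e^{t\hat L}$ (with $\eta\in Y$, $\eta=1$ near $\cR$), not $e^{t\eta\hat L}$, that one feeds into Lemma~\ref{lem:mult X}; the combined factor $\eta e^{t\hat L}\hat N$ should likewise be cut off as a whole. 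Second, and this is the genuine gap: you obtain uniqueness only within the weighted ball \eqref{contraction set} produced by the contraction, but the definition of global wellposedness demands uniqueness in all of $C([0,\I);(\F^{-1}\X_\cR)^{n_1})$, and two solutions from that class need not lie in a common $\X^{\mu,\nu}$ a priori. The paper closes this by a separate compactness argument: given two solutions $u_1,u_2$ and any $T<\I$, the set $\hat u_1([0,T])\cup\hat u_2([0,T])\subset(\X_\cR)^{n_1}$ is compact, so Proposition~\ref{prop:cmp X} produces a single pair $(\mu_1,\nu_1)\in C^\ndc_0\times C^\ndc_0$ and a bound $b_1$, whence the contraction in that weighted ball forces $u_1=u_2$ on $[0,T]$. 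Without this step, you have proved unique existence only in a restricted class, not wellposedness in the stated topology.
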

The simplest choice is $\cR=\R^d_{\ge 0}$, then $\X_\cR=\X$. 
As mentioned already, a sufficient condition is that $\hat L,\hat M,\hat N\in Y$ (cf.~\eqref{def Y}), and then the global wellposedness holds for $t\in\R$, because the last condition is invariant for the time inversion $t\mapsto-t$. 
\begin{proof}
Since the support property is closed in $\sD'(\R^d)$ and $\X\emb\sD'$, $\X_\cR\subset\X$ is also closed, and it is the inductive limit of $\X_\cR^{\mu,\nu}$. 
The closedness in convolution is immediate from $\cR+\cR\subset\cR$, 
while the closedness is obvious for multiplication by the coefficients. Hence all the operations in the construction of solutions, including the iteration argument, are closed in $\X_\cR$, 
so it suffices that the coefficients $\hat L,\hat M,\hat N$ are defined around $\cR$, and we do not need their smoothness near $\x_1=0$.  
Moreover, \hyptag{C-s} with $\vv=e_1$ implies that \eqref{coeff bd} is satisfied for all $\mu\in C^\ndc_0$ and $t,l>0$.

For any $\de\in(0,1]$ and $\mu_0\in C^\ndc_\de$, let $\mu:=\fC^1_\de(\mu_0)$. 
Then for any $\nu_0\in C^\ndc_0$ and $b_0\in(0,\I)$, by the above result, there exists $\nu\in C([0,\I);C^\ndc_0)$ such that for any $\hat u_0\in(\X_\cR)^{n_1}$ satisfying $\ro^{\mu,\nu_0}(\hat u_0)\le b_0$, there is a unique global solution $u\in C([0,\I);(\F^{-1}\X_\cR)^{n_1})$ of \eqref{FDuh} satisfying $\ro^{\mu,\nu(t)}(\hat u(t))\in L^\I(0,\I)$. 
Thus we obtain a global solution for any initial data $u_0\in(\F^{-1}\X_\cR)^{n_1}$. 

If we have two solutions $u_1,u_2\in C([0,\I);(\F^{-1}\X_\cR)^{n_1})$ for the same initial data $u_0$, 
then for any $T\in(0,\I)$, $\hat u_1([0,T])\cup \hat u_2([0,T])\subset(\X_\cR)^{n_1}$ is compact, so Proposition \ref{prop:cmp X} yields some $\mu_1,\nu_1\in C^\ndc_0$ and $b_1\in(0,\I)$ such that $\ro^{\mu_1,\nu_1}(\hat u_j(t))\le b_1$ for $j=1,2$ and $t\in[0,T]$. 
Then we may apply the uniqueness by contraction for some weight $(\mu_2,\nu_2)\ge(\mu_1,\nu_1)$ and the size $b_1$ to deduce that $u_1=u_2$ on $t\in[0,T]$. Since $T\in(0,\I)$ is arbitrary, we obtain the uniqueness in $C([0,\I);(\F^{-1}\X_\cR)^{n_1})$. 

For the sequential continuity of the solution map, take any sequence of initial data $\hat u_{0,n}\in(\X_\cR)^{n_1}$ convergent to some $\hat u_{0,\I}$ in $\X_\cR$, and let $u_n$ be the corresponding sequence of solutions. 
Since $\{\hat u_{0,n}\}_{n\in\N\cup\{\I\}}\subset(\X_\cR)^{n_1}$ is compact, 
Proposition \ref{prop:cmp X} yields some $\mu_0,\nu_0\in C^\ndc_0$ and $b_0\in(0,\I)$ such that 
$\{\hat u_{0,n}\}\subset B_{\X^{\mu_0,\nu_0}}(0,b_0/4)$, and that $\hat u_{0,n}\to\hat u_{0,\I}$ in $\X^{\mu_0,\nu_0,\io}_\p$ for any $\io\in C^\ndc_+$. 
After replacing $\mu_0$ with $\mu:=\fC^1_\de(\mu_0)$ as before, we use Lemma \ref{lem:mult X} to obtain the weight $\ga$ so that the coefficient multipliers are bounded also on $\X^{\mu,\nu,\ga\nu}_\p$. For the convolutions in the nonlinearity, we use Lemma \ref{lem:conv Xp}. 
Thus we obtain, in the same way as \eqref{cont diff est}, for $\hat u^0,\hat u^1$ satisfying $\sup_{t\ge 0}\ro^{\mu,\nu}(\hat u^k)\le b_1$, 
\EQ{ \label{diff est Xp}
 \pt\|\D(\hat u_0^0,\hat u^0)-\D(\hat u_0^1,\hat u^1)\|_{L^\I_t \X^{\mu,\nu,\ga\nu}_\p} 
   \prq\le \ro^{\mu,\nu_0}(\hat u_0^0-\hat u_0^1)
 \pn+4\e b_1\ti \sH^{(1)}(\e b_1)\Ga(1-\te)\|\hat u^0-\hat u^1\|_{L^\I_t \X^{\mu,\nu,\ga\nu}_\p},}
where the extra factor $4$ comes from the factor $2$ in Lemma \ref{lem:mult X} used twice. 
Since $\ro^{\mu,\nu_0}(\hat u_{0,n})<b_0/4$, 
the contraction on \eqref{contraction set} implies that $\ro^{\mu,\nu}(\hat u_n)\le b_1/4=b_0/2$ for all $n\in\N\cup\{\I\}$ and $t\ge 0$. 
Then \eqref{diff est Xp} with the smallness of $b_1\e$ in \eqref{choice e} implies 
\EQ{
 \|\hat u_n-\hat u_\I\|_{L^\I_t \X^{\mu,\nu,\ga\nu}_\p} \le 2\ro^{\mu,\nu_0}(\hat u_{0,n}-\hat u_{0,\I})
 \to 0,}
as $n\to\I$. 
Combining it with the uniform bound $\ro^{\mu,\nu}(\hat u_n)\le b_0/2$, we obtain by Proposition \ref{prop:cmp X} that $\hat u_n(t)\to\hat u_\I(t)$ in $\X$ locally uniformly for all $t\ge 0$. 
\end{proof}

A notable special case of $\X_\cR$ is when $\cR$ is a lattice:
\EQ{
 \exists L\in(0,\I)^d, \pq \cR \subset \prod_{1\le j\le d} \tf{2\pi}{L_j}\Z.}
In this case, $\cR$ should avoid the boundary points on $\x_1=0$, because such distributions do not satisfy the decay condition at $\x_1=0$, \hyptag{X-i}. 
Under the above constraint, the solutions in $\F^{-1}\X_\cR$ are periodic in space: For any $F\in\F^{-1}\X_\cR$ and $k\in\Z^d$, we have 
\EQ{
 F(x_1+k_1L_1\etc x_d+k_dL_d)=F(x)}
in the distribution ($\F^{-1}\hat\sD'$) sense. 

On the other hand, if $F\in \F^{-1}L^1_{loc}(\R^d)$, then $F$ is decaying at the spatial infinity in the average sense: By Riemann-Lebesgue,
\EQ{ \label{dec L1loc}
 \F\sD(\R^d)\ni\forall\fy, \pq \lim_{|x|\to\I}(\fy*F)(x)=0.}
For example, the Littlewood-Paley decomposition of $F$ satisfies the decay. 
Thus the space $\F^{-1}\X$ contains both the periodic solutions and the decaying solutions, which 
is in a sharp contrast with the usual settings for the Cauchy problem, where the boundary condition at infinity has to be distinguished by the solution space. 
 
The condition \hyptag{C-s} requires uniform boundedness of the coefficients $\hat L,\hat M,\hat N$ in the orthogonal directions.  
For much more general equations, it may be satisfied by restricting $\supp\hat u_0$ in the orthogonal directions, which is the case if $\cR\cap\R^d_{<l}$ is bounded for each $l\in(0,\I)$, then  
\hyptag{C-0} implies \hyptag{C-s} with $\te=0$ for all $t\in\R$. 
Thus Corollary \ref{GWP-XR} implies Theorem \ref{thm:gex supp cond}. 
Note that in this case \eqref{Feq} makes sense in $\X_\cR$, with the classical time derivative in the topology of $\X$.  
Moreover, $\X_\cR$ is a bit simplified:  
\begin{lem} \label{lem:simp X}
If $\cR\subset\R^d$ is closed such that $\cR\cap\R^d_{<l}$ is bounded for each $l>0$, then
\EQ{ \label{Supp cond}
 \X_\cR = \{F\in \sD'(\R^d) \mid \supp F\subset \cR,\ X^lF\in\M_+\text{ for some $l>0$} \}. }
If in addition $\cR\cap\R^d_{<l}=\empt$ for some $l>0$, then $\X_\cR=\sD'_\cR$. 
\end{lem}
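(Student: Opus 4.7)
The plan is to prove both inclusions in \eqref{Supp cond} using the characterization of $\X$ by conditions \hyptag{X-i}--\hyptag{X-ii} given in Lemma \ref{lem:defX}. For the forward inclusion, given $F \in \X_\cR$, condition \hyptag{X-i} supplies $a \in (0,\I)$ and a measure $\fm \in \M_+$ agreeing with $F$ on $\R^d_{<a}$; for any $l$ with $2l<a$, the cutoff $\chi^l(\x_1)$ is supported in $\R^d_{\le 2l} \subset \R^d_{<a}$, so $X^l F(\fy) = F(\chi^l\fy) = \fm(\chi^l\fy) = (\chi^l\fm)(\fy)$ for every $\fy \in \sD(\R^d)$. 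Since $|\chi^l\fm| \le |\fm|$ is a finite measure vanishing on $\R^d_{\le 0}$, this yields $X^l F = \chi^l \fm \in \M_+$.

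For the reverse inclusion, I start from $F$ with $\supp F \subset \cR$ and $X^l F \in \M_+$ for some $l > 0$, and verify \hyptag{X-i}--\hyptag{X-ii}. Condition \hyptag{X-i} is immediate with $a := l$ and $\fm := X^l F$, because $\chi^l \equiv 1$ on $\R^d_{\le l}$ forces $F = X^l F$ on $\R^d_{<l}$. For \hyptag{X-ii}, fix $a > 0$ and set $K_a := \cR \cap \R^d_{\le a}$; this set is closed as an intersection of closed sets and bounded by the standing hypothesis on $\cR$, hence compact. Choose $\eta \in \sD(\R^d)$ equal to $1$ on an open neighborhood $V$ of $K_a$. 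For any $\fy \in \sD_{<a}$, the support of $(1-\eta)\fy$ lies in $\R^d_{<a} \setminus V$, which is disjoint from $\cR$ (since $\cR \cap \R^d_{<a} \subset K_a \subset V$) and therefore from $\supp F$; hence $F((1-\eta)\fy) = 0$ and $F(\fy) = (\eta F)(\fy)$. Now $\eta F$ is a compactly supported distribution, so the classical structure theorem gives a finite representation $\eta F = \sum_{|\al| \le m} \p^\al g_\al$ with continuous, compactly supported (hence $L^1$) functions $g_\al$, which yields the explicit form of \hyptag{X-ii} with $f_\al := (-1)^{|\al|} g_\al$.

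The second statement is then a direct corollary: if $\cR \cap \R^d_{<l} = \empt$ for some $l > 0$, then any $F \in \sD'_\cR$ has $\supp F \subset \cR \subset \R^d_{\ge l}$, so $X^{l/4}F$ is supported in $\R^d_{\le l/2} \cap \R^d_{\ge l} = \empt$ and therefore equals $0 \in \M_+$; the first part then gives $F \in \X_\cR$, and the reverse containment is trivial. The main obstacle is the verification of \hyptag{X-ii} in the reverse direction, which requires a careful disjoint-support argument using the compactness of $\cR \cap \R^d_{\le a}$ (which is where the standing hypothesis on $\cR$ enters) combined with the structure theorem that a compactly supported distribution of finite order is a finite sum of derivatives of continuous compactly supported functions.
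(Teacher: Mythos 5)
Your proof is correct and rests on the same core observation as the paper's: since $\cR\cap\R^d_{<a}$ is bounded (hence compact), $F$ acts as a compactly supported distribution on $\R^d_{<a}$, so it has finite order there. The difference is in how that observation is cashed out. You verify condition \hyptag{X-ii} in its literal $W^{-m,1}$ form by localizing with a cutoff $\eta$ near $K_a$ and invoking the structure theorem to write $\eta F$ as a finite sum of derivatives of continuous compactly supported (hence $L^1$) functions. The paper instead stops at the quantitative consequence of finite order, namely the bound $|F(\fy)|\le m(l)\|\fy\|_{m(l)}^+$ for $\fy\in\sD_{(0,l)}$, and then observes that this bound is the only thing the converse direction of the proof of Lemma~\ref{lem:defX} actually extracts from \hyptag{X-ii}, so the concrete $W^{-m,1}$ representation need not be produced. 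Your route is slightly longer but more self-contained: one does not have to re-read the internals of Lemma~\ref{lem:defX}'s proof to see why the argument closes. The paper's is shorter but relies on that internal reference. Your treatment of the forward inclusion and of the second statement (via $X^{l/4}F=0\in\M_+$) matches the paper's and is correct.
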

In particular in the periodic case, $\cR$ can not contain any point on the boundary $\x_1=0$ as mentioned above, so the last statement holds. 
\eqref{Supp cond} applies to $\X$ itself if $d=1$. 
\begin{proof}
In view of Lemma \ref{lem:defX}, it suffices to derive the condition \hyptag{X-ii} from the other conditions. 
For any $l\in(0,\I)$, the boundedness of $\cR\cap\R^d_{<l}$ with the definition of $\sD'$ implies that $F$ is of finite order on $\R^d_{<l}$. Hence there exists $m(l)\in\N$ such that $|F(\fy)|\le m(l)\|\fy\|_{m(l)}=m(l)\|\fy\|_{m(l)}^+$ for all $\fy\in\sD_{(0,l)}$, which is enough for the condition \hyptag{X-ii}, by the proof of Lemma \ref{lem:defX}. 
The last sentence of lemma is obvious. 
\end{proof}

Examples of $\cR$ satisfying the local boundedness 
are conical regions in the form
\EQ{
 \cR:=\{\x\in\R^d : |\x^\perp|\le R(\x_1)\},}
for some $R\in C^+_0$ that is super-additive ($R(a)+R(b)\le R(a+b)$ for all $a,b\ge 0$), 
which implies $\cR+\cR\subset\cR$. 
The left side $|\x^\perp|$ may be replaced with any semi-norm on $\R^d$ that bounds $\x^\perp$. 
We may choose $R$ to grow arbitrarily fast by
\begin{lem} \label{lem:super-additive}
For any $d\in\N$, any non-empty $A\subset\R^d_{\ge 0}$ and any locally bounded $R_0:A\to[0,\I)$ satisfying 
\EQ{
 \limsup_{\x\in A,\ |\x|\to+0}|\x|^{-1}R_0(\x)<\I,} 
there exists a super-additive $R\in C^\ndc_0$ satisfying $R_0(\x)\le R(|\x|)$ for all $\x\in A$. 
\end{lem}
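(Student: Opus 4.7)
The plan is to reduce the problem to producing a continuous, non-decreasing, convex majorant of $R_0$ viewed as a function of $|\x|$, exploiting the well-known fact that any continuous convex $R:[0,\I)\to[0,\I)$ with $R(0)=0$ is automatically super-additive: for $a,b\ge 0$ with $t:=a+b>0$, convexity applied to the representations $a=(a/t)t+(b/t)\cdot 0$ and $b=(b/t)t+(a/t)\cdot 0$ gives $R(a)\le (a/t)R(t)$ and $R(b)\le (b/t)R(t)$, which sum to $R(a)+R(b)\le R(a+b)$. Moreover, a continuous non-decreasing $R$ with $R(0)=0$ lies in $C^\ndc_\de$ for all sufficiently small $\de\in(0,1]$ (since $R(2\de)\to 0$ as $\de\to 0$), hence in $C^\ndc_0$ automatically.

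First I would define the pointwise dominator $S(t):=\sup\{R_0(\x) : \x\in A,\ |\x|\le t\}$ with $\sup\empt:=0$. Reading ``locally bounded'' in the natural way, as bounded on $A\cap\{|\x|\le t\}$ for every $t>0$, yields $S(t)<\I$, and $S$ is non-decreasing by construction. The limsup hypothesis supplies constants $C_0\in[0,\I)$ and $r_0\in(0,1]$ with $R_0(\x)\le C_0|\x|$ on $A\cap\{|\x|\le r_0\}$, whence $S(t)\le C_0 t$ on $[0,r_0]$ and in particular $S(t)\to 0$ as $t\to 0^+$.

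Next I would construct $R$ as a piecewise linear convex majorant of $S$ with breakpoints $t_n:=nr_0$ for $n\in\N_0$. Setting $R(0):=0$ and initial slope $s_0:=C_0$ on $[0,r_0]$, define the slopes recursively by $s_n:=\max(s_{n-1},S(t_{n+1})/r_0)$ on $[t_n,t_{n+1}]$, and let $R$ be the corresponding continuous piecewise linear function. By construction $(s_n)$ is non-decreasing, so $R$ is convex, and $R(t_{n+1})\ge s_n r_0\ge S(t_{n+1})$; combined with monotonicity of $R$ and $S$ this gives $R(t)\ge S(t)\ge R_0(\x)$ for every $t\ge 0$ and every $\x\in A$ with $|\x|\le t$. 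Super-additivity and membership in $C^\ndc_0$ then follow from the reduction of the first paragraph.

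The main obstacle is balancing the two requirements at the origin: the slope of $R$ near $0$ must be small enough that $R\in C^\ndc_\de$ for some $\de>0$, while the slopes must be allowed to grow arbitrarily fast at infinity in order to swallow $S$. The $C_0|\x|$ bound from the limsup hypothesis is exactly what ties down the initial slope $s_0=C_0$, and the convex piecewise linear ansatz decouples the two regimes cleanly, leaving no substantial obstruction.
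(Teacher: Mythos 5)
Your overall strategy---proving super-additivity via convexity together with $R(0)=0$, where the paper instead uses the explicit integral formula $R(l)=\int_l^{2l}R_1(t)\,dt$ with $R_1(l):=\sup\{|\y|^{-1}R_0(\y)\mid\y\in A,\ 0<|\y|\le l\}$ and verifies super-additivity by a change of variables---is a valid alternative route. But the piecewise linear construction has a gap in the verification that $R\ge S$. With $s_n:=\max(s_{n-1},S(t_{n+1})/r_0)$ you only guarantee $R(t_{n+1})\ge s_nr_0\ge S(t_{n+1})$ at the breakpoints, and monotonicity of $R$ and $S$ does not extend this to interior points. Indeed, for $t$ just above $t_n$ you have $R(t)$ close to $R(t_n)$, which is only controlled from below by $S(t_n)$, whereas $S(t)$ may already be as large as $S(t_{n+1})$: if $R_0$ is very large on some $\x\in A$ with $|\x|$ slightly greater than $t_n$, so that $S(t_{n+1})\gg R(t_n)$, then $R(t)<S(t)$ on an initial subinterval of $(t_n,t_{n+1})$. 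So the claimed $R(t)\ge S(t)$ for all $t$ does not follow.

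The fix is an index shift so that $R(t_n)$ already dominates $S(t_{n+1})$: set $s_0:=\max(C_0,S(t_2)/r_0)$ and $s_n:=\max(s_{n-1},S(t_{n+2})/r_0)$. Then $R(t_{n+1})\ge s_nr_0\ge S(t_{n+2})$, hence for $t\in[t_n,t_{n+1}]$ with $n\ge1$ one has $R(t)\ge R(t_n)\ge S(t_{n+1})\ge S(t)$, while on $[0,r_0]$ one has $R(t)=s_0t\ge C_0t\ge S(t)$. The remainder of your argument (convexity from the non-decreasing slopes, super-additivity from convexity with $R(0)=0$, and membership in $C^\ndc_0$ by continuity at the origin) is sound. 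Note that the larger initial slope $s_0\ge C_0$ is harmless since only an upper majorant is needed.
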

\begin{proof}
Let $R_1(l):=\sup\{|\y|^{-1}R_0(\y)\mid \y\in A,\ 0<|\y|\le l\}$. 
Then $R_1:(0,\I)\to[0,\I)$ by the assumptions, and it is non-decreasing. Then $R:[0,\I)\to[0,\I)$ defined by 
$R(l):=\int_l^{2l} R_1(t)dt$ 
belongs to $C^\ndc_0$, satisfying  
\EQ{
 R(a+b)\ge \int_{a+b}^{2a+b} R_1(t-b)dt + \int_{2a+b}^{2a+2b}R_1(t-2a)dt = R(a)+R(b)}
and $R(|\x|)\ge |\x|R_1(|\x|)\ge R_0(\x)$ for all $\x\in A$.
\end{proof}

\begin{rem}
In addition to the translation invariance, the space $\F^{-1}\X$ is invariant for the exponential Fourier multipliers in the form $e^{z\p_1}:=\F^{-1}e^{iz\x_1}\F$ for $z\in\C$, 
which is the holomorphic extension for the $x_1$ variable. 
Indeed, the multiplication with $e^{iz\x_1}$ is bounded $\X^{\mu,\nu_0}\to\X^{\mu,\nu_1}$ if $e^{|\im z|l}\nu_0(l)\le\nu_1(l)$ for all $l>0$, so it is continuous on $\X$. 
Moreover, the convolution is invariant: for any $F,G\in\X$ and $z\in\C$, we have 
\EQ{
 e^{iz\x_1}(F*G)=(e^{iz\x_1}F)*(e^{iz\x_1}G),}
which is immediate for $F,G\in\sD(\R^d)$ and then extended by the density of $\sD_{>0}\emb\X$, Proposition \ref{prop:dense X}. 
Hence the set of solutions has the same invariance: If $u\in C(I;(\F^{-1}\X)^{n_1})$ is a solution of \eqref{GEE} on an interval $I$, then $e^{z\p_1}u\in C(I;(\F^{-1}\X)^{n_1})$ is also a solution for any $z\in\C$. 
\end{rem}


\section{Comparison with other solutions} \label{s:comp}
Although our definition of solutions is quite natural in view of the Fourier transform of the equation, it is non-trivial to compare those solutions with others defined and constructed in different ways. 
The bottleneck to use the uniqueness in $\X$ is the Fourier support condition and decay at $\x_1=0$. 

Nevertheless, the following is a natural procedure to compare our solutions in $\X$ with some others, when it is easy to obtain the latter. 
Suppose 
\begin{enumerate}
\item Global wellposedness in $\F^{-1}\X_\cR$ holds (Corollary \ref{GWP-XR}) for some $\cR\subset\R^d$ such that the closure of the interior of $\cR$ contains $0$. 
\item Local wellposedness holds in another space $X$ such that $\sD_{>0}\emb \F X \emb \sD'(\R^d)$.
\item $I_n$ in Proposition \ref{prop:dense X} is an approximating sequence also on $X$. 
\end{enumerate}
Then we may choose the approximating sequence $I_n$ of Proposition \ref{prop:dense X} such that $I_n:\X_\cR\to\sD_\cR$. 
For any initial data $u_0\in(\F^{-1}\X_\cR)^{n_1}$, we have an approximating sequence $u_{0,n}:=\F^{-1}I_n\hat u_0\in(\F^{-1}\sD_\cR)^{n_1} \emb X^{n_1}$. 
Let $u_n$ be the solution for the initial data $u_{0,n}$ for each $n\in\N$. 
The global wellposedness in $\F^{-1}\X_\cR$ implies that $u_n \to u$ in $C([0,\I);(\F^{-1}\X_\cR)^{n_1})$. 
The local wellposedness in $X$ implies that $u_n\in C([0,T_n);X^{n_1})$ for the maximal existence time $T_n\in(0,\I]$. 
If $u_0\in X^{n_1}$, then we have $u_{0,n}\to u_0$ by the approximation property in $X$, so the local wellposedness in $X$ implies $u_n\to u$ in $C([0,T);X^{n_1})$, where $T\in(0,\I]$ is the maximal time of $u$. 
Here the limit $u$ is the same, as long as the convergence holds, because both of $\X$ and $\F X$ are continuously embedded into $\sD'(\R^d)$. 

Therefore, the solutions are the same between $\F^{-1}\X_\cR$ and $X$, for all the initial data in their intersection as long as the solution exists in $X$. 
In this sense, we may regard the solutions in $\F^{-1}\X_\cR$ as extension from $X\cap\F^{-1}\X_\cR$. 
However, unless the global wellposedness holds also in $X$, the maximal existence time $T_n$ may well go to $0$ as $n\to\I$ when $u_0\not\in X^{n_1}$. 

\subsection{Solutions beyond blow-up} \label{ss:beyond bup}
Blow-up may well happen in finite time in $X$, while the solution is extended globally in $\F^{-1}\X_\cR$. 
A typical example is the ODE
\EQ{ \label{simple ODE}
 \dot u = u^2,}
whose Cauchy problem is explicitly solved with the Taylor expansion: 
\EQ{ \label{ODE formula}
 u(t) = \tf{u_0}{1-tu_0} = \sum_{n\in\N_0} t^n u_0^{n+1},}
whose radius of convergence is $|tu_0|<1$ with the simple pole at $tu_0=1$. 
The iteration sequence $\{v_n\}_{n\ge 0}$ defined by 
\EQ{ \label{def itera}
 \dot v_n=v_{n-1}^2, \pq v_0(0)=u_0,}
is in the form (which may be shown by induction)
\EQ{ \label{iteration seq}
 v_n(t) = \sum_{0\le k \le 2^n-1} c_{n,k}t^k u_0^{k+1}, \pq \CAS{c_{n,k}=1 &(0\le k\le n)\\ 0<c_{n,k}<1 &(n<k\le 2^n-1).} }
Using $\|f*g\|_{\M}\le\|f\|_{\M}\|g\|_{\M}$ for the Fourier transform of the equation, 
it is easy to see that this equation is locally wellposed in $\F^{-1}\M$, 
and the iteration converges to $u$ for $|t|\|u_0\|_{\F^{-1}\M}<1$, where $\|\fy\|_{\F^{-1}\M}:=\|\F\fy\|_{\M}$. 
Moreover, if $\hat u_0$ is a non-negative measure, 
then we have $\|\hat u_0*\hat u_0\|_{\M}=\|\hat u_0\|_{\M}^2$ 
and similarly for iterations, so 
\EQ{
 \|u(t)\|_{\F^{-1}\M}=\sum_{n\in\N_0} t^n\|u_0\|_{\F^{-1}\M}^{n+1}=\tf{\|u_0\|_{\F^{-1}\M}}{1-t\|u_0\|_{\F^{-1}\M}} }
blows up exactly as $t\|u_0\|_{\F^{-1}\M}\to 1-0$. 
Since the non-negative Fourier transform implies 
$\|u(t)\|_{L^\I}=u(t,0)=\|u(t)\|_{\F^{-1}\M}$, 
it also blows up in $L^\I_x$ and at $x=0$, as well as in smaller Banach algebras, such as $H^s(\R^d)\emb \F^{-1}L^1 \emb \F^{-1}\M$ for $s>d/2$. 
On the other hand, $u(t)=\sum_{n=0}^\I t^n u_0^{n+1}$ is convergent and smooth for all $t\in\R$ in $\X$. 
All the above remains valid when $\M$ is replaced with the closed subspace $L^1(\R^d)$. 

A more interesting case is the Burgers equation 
\EQ{ \label{Burgers}
 u_t = u_{xx} + 2uu_x}
for $u(t,x):\R^{1+1}\to\C$, which is well-known to be explicitly solvable by the Cole-Hopf transform, thereby the blow-up has been intensively studied, cf.~\cite{PoSv} for the complex-valued blow-up solutions on $\R$. 
A simple example within our function space is for the monochromatic initial data 
\EQ{ \label{monochro}
 u_0(x) = -iae^{ix},}
with a parameter $a\in\C$, for which \cite{KhWuYuZh} proved blow-up in $L^2(\R)$. 
Here we investigate the unique global solution $u\in C(\R;\F^{-1}\X)$ (given by Corollary \ref{GWP-XR}), using the explicit formula in the Fourier space as in \cite{KhWuYuZh}:
\EQ{ \label{Burgers in F}
 \pt u(t) = \sum_{n\in\N} u^n(t)e^{inx}, \pq u^1(t)=-iae^{-t},
 \pr u^n(t) = in\int_0^t e^{-n^2(t-s)}\sum_{1\le k\le n-1}u^k(s)u^{n-k}(s)ds \pq(n\ge 2).}
Changing the variable from $u^n$ to $U_n$ defined by 
\EQ{
 u^n(t)=-i(ae^{-t})^nU_n(t),}
the equation may be rewritten as 
\EQ{
 U_1(t)=1, \pq U_n(t)=\int_0^t e^{-n(n-1)(t-s)} n\sum_{1\le k\le n-1} U_k(s)U_{n-k}(s)ds.}
Then we easily observe by induction on $n\in\N$ that 
\EQ{
 U_2(t)^{n-1} = (1-e^{-2t})^{n-1} \le U_n(t) \le 1.}
Indeed, it is obviously satisfied for $n=1,2$, while for $n\ge 3$, the upper bound is immediate by  injecting the induction hypothesis for $k=1\etc n-1$ into the equation. 
For the lower bound, the same injection yields 
\EQ{
 U_n(t) \ge e^{-n(n-1)t}\int_0^t n(n-1)e^{n(n-1)s}U_2(s)^{n-2}ds,}
and the conclusion follows by replacing the integrand with 
\EQ{
 \p_s[e^{n(n-1)s}U_2(s)^{n-1}] \pt=n(n-1)e^{n(n-1)s}U_2(s)^{n-2}(1-\tf{n-2}{n}e^{-2s}) 
 \pr\le n(n-1)e^{n(n-1)s}U_2(s)^{n-2}. }
For the original Fourier coefficients, we thus obtain 
\EQ{ \label{est Un}
 |a|e^{-t}\cdot |ae^{-t}(1-e^{-2t})|^{n-1} \le |u^n(t)|=|ae^{-t}|^nU_n(t) \le |ae^{-t}|^n. }
Note that $e^{-t}(1-e^{-2t})$ attains its maximum $\tf2{3\sqrt{3}}$ at $t=\tf12\log 3$. 

On the other hand, the equation is locally wellposed in $\F^{-1}\M$, where the above solution $u$ satisfies 
\EQ{ \label{FM norm}
  \sum_{n\in\N}|u^n(t)| = \|u(t)\|_{\F^{-1}\M} = iu(t,x_a) = \|u(t)\|_{L^\I_x},}
for all $x_a\in\R$ satisfying $ae^{ix}=|a|$. 
The classical blow-up occurs when the above diverges. 
Hence the threshold on $|a|$ for blow-up is given by
\EQ{ \label{def a*}
 \pt \fB:=\{(t,a)\in[0,\I)^2 \mid \sum_{n\in\N}(ae^{-t})^nU_n(t)=\I\}, 
 \pr a_*(t):=\inf\{a\mid (t,a)\in \fB\}, \pq 
 a_{**}:=\inf_{t\ge 0}a_*(t).}
\eqref{est Un} implies
\EQ{ \label{est a*}
 e^t \le a_*(t) \le \tf{e^t}{1-e^{-2t}}, \pq 1\le a_{**} \le \tf{3}{2}\sqrt{3}=\inf_{t\ge 0}\tf{e^t}{1-e^{-2t}},}
while the local wellposedness in $\F^{-1}\M$ implies $\lim_{t\to+0}a_*(t)=\I$. 
The classical blow-up time is given by 
\EQ{
 T_*(a):=T_*(|a|)=\inf\{t>0 \mid a_*(t)\le|a|\}>0 \pq(\inf\empt=\I). }
Indeed, for $0\le t<T_*(a)$, $|a|<a_*(t)$ implies that $u^n(t)$ is exponentially decaying as $n\to\I$, so $u(t)$ is analytic. 
If $u(T_*(a))\in\F^{-1}\M$, then the local wellposedness in $\F^{-1}\M$ implies that $u\in C([0,T];\F^{-1}\M)$ for some $T>T_*(a)$, and also for slightly bigger $|a|$ by the continuous dependence on $u(0)$, contradicting the definition of $T_*(a)$. 
Hence, if $T_*(a)<\I$, then $(T_*(a),|a|)\in\fB$ and $T_*(a)=\min\{t\mid (t,|a|)\in\fB\}$, so $a_*(T_*(a))\le |a|$, while $T_*(a_*(t))\le t$ is obvious by definition.   
Then \eqref{est Un} implies
\EQ{ \label{bup time est}
 e^{T_*(a)} \le |a| \le \tf{e^{T_*(a)}}{1-e^{-2T_*(a)}}. }
In particular $T_*(a)\le\log|a|$. It is obvious by definition that $T_*(a)=\I$ for $|a|<a_{**}$ and $T_*(a)<\I$ for $|a|>a_{**}$. 
Let $T_{**}:=T_*(a_{**})$. 
If $T_{**}=\I$ then by the local wellposedness as above, the solution with slightly bigger $|a|$ exists for $t<\log a_{**}+1$, contradicting $T_*(a)\le\log|a|$ for $|a|>a_{**}$. 
Hence $(T_{**},a_{**})\in\fB$, $a_*(T_{**})=a_{**}=\min a_*$, and 
$u$ blows up iff $|a|\ge a_{**}$, and at $t=T_*(a)\le\log|a|$. 
Since $T_*$ is non-increasing by definition, the blow-up time is bounded for all $a\in\C$ by 
\EQ{
 \max\{T_*(a) \mid a\in\C,\ T_*(a)<\I\}=T_*(a_{**}) \le \log|a_{**}|. }
If $a_*(t)<|a|$ (which happens iff $|a|>a_{**}$), $u^n(t)$ is exponentially divergent as $n\to\I$ (along a subsequence) so $u(t)\not\in\sS'(\R)$. 
There can be some time gap after $T_*(a)$ if $a_*(t)\ge|a|$ on some time interval starting from $T_*(a)$, but such intervals are mutually disjoint, so those exceptional $|a|$ are at most countable. Thus we obtain
\begin{prop}
There exist unique $a_{**}\in(1,\tf32\sqrt{3}]$, $T_{**}\in(0,\log \tf32\sqrt{3})$ and $a_*:(0,\I)\to[a_{**},\I)$, satisfying $\lim_{t\to+0}a_*(t)=\I$, \eqref{est a*} and $\min a_*=a_{**}=a_*(T_{**})$, such that 
the unique solution $u\in C(\R;\F^{-1}\X)$ of \eqref{Burgers}--\eqref{monochro} blows up in the  classical sense if and only if $|a|\ge a_{**}$ and at $T_*(a):=\min a_*^{-1}((0,|a|])\le\min(T_{**},\log|a|)$. 
For those $t>0$ where $|a|<a_*(t)$, $u(t)$ is analytic. For $|a|>a_*(t)$, $u^n(t)$ is exponentially divergent as $n\to\I$ and so $u(t)\not\in\sS'(\R)$. 
Excepting a countable set of $|a|$, $u(t)$ gets out of $\sS'(\R)$ immediately after $T_*(a)$, while  $u(t)$ is analytic for $t<T_*(a)$ and for 
$t>T^*(a):=\sup a_*^{-1}((0,|a|]) \le \log|a|$. 
\end{prop}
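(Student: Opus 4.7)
The plan is to use Cauchy--Hadamard to write $a_*(t)=e^t/r(t)$, where $r(t):=\limsup_{n\to\I}U_n(t)^{1/n}$, and then deduce everything from a monotonicity argument for $U_n$ in $t$.

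First I would establish by strong induction on $n$ that $U_n$ is non-decreasing on $[0,\I)$. Writing $U_n(t)=n\int_0^t e^{-n(n-1)u}f_n(t-u)\,du$ with $f_n(s):=\sum_{k=1}^{n-1}U_k(s)U_{n-k}(s)$, the inductive hypothesis makes $f_n$ non-decreasing, whence the integrand is non-decreasing in $t$ for each fixed $u$, and so is the integral. Consequently $r$ is non-decreasing on $(0,\I)$, has at most countably many jump discontinuities, and $a_*=e^{\cdot}/r$ inherits this regularity. Cauchy--Hadamard combined with \eqref{est Un} then yields \eqref{est a*}.

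Combined with the boundary behaviors $a_*(t)\to\I$ as $t\to 0^+$ (from local wellposedness in $\F^{-1}\M$, already noted in the text) and as $t\to\I$ (from $a_*\ge e^{\cdot}$), the infimum $a_{**}$ is approached along some sequence in a compact interval $[t_1,t_2]\subset(0,\I)$ on which $a_*\ge e^{t_1}>1$. The key step is to enforce attainment at some $T_{**}$: I would pick a subsequence converging $t_n\uparrow T_{**}$ monotonically from below, which is possible since $r$ is non-decreasing and can only jump upward, so an infimizing sequence cannot be forced to approach only from the right. The non-decrease of $r$ then gives $r(T_{**})\ge r(T_{**}^-)\ge e^{T_{**}}/a_{**}$, hence $a_*(T_{**})\le a_{**}$, and so equality holds. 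The bounds $a_{**}\in(1,\tf32\sqrt{3}]$ and $T_{**}\in(0,\log(\tf32\sqrt{3}))$ then follow from $a_{**}\ge e^{T_{**}}>1$, \eqref{est a*}, and elementary comparison using that the upper bound $e^t/(1-e^{-2t})$ attains its minimum $\tf32\sqrt{3}$ at $t=\tf12\log 3<\log(\tf32\sqrt{3})$.

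The blow-up and analyticity descriptions then translate directly: $\|u(t)\|_{\F^{-1}\M}=\sum_n|a|^n e^{-nt}U_n(t)$ diverges for $|a|>a_*(t)$, so blow-up for some $t$ is equivalent to $|a|\ge a_{**}$; the blow-up time is $T_*(a)=\min a_*^{-1}((0,|a|])$ (min by the same left-approximation argument), with $T_*(a)\le\log|a|$ from $e^{T_*(a)}\le a_*(T_*(a))\le|a|$. Analyticity of $u(t)$ for $|a|<a_*(t)$ and $u(t)\notin\sS'(\R)$ for $|a|>a_*(t)$ both reduce to computing $\limsup_n|u^n(t)|^{1/n}=|a|/a_*(t)$ and recalling that any tempered distribution on $\R$ with Fourier support in $\Z$ has at most polynomially growing coefficients. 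The analogous statement $t>T^*(a)\implies u(t)$ analytic, with $T^*(a)\le\log|a|$, follows identically from $a_*\ge e^{\cdot}$. Finally, the "excepting a countable set of $|a|$'' qualifier accommodates values of $|a|$ equal to the height of a plateau of $a_*$; since $r$ has at most countably many jumps, $a_*$ has at most countably many plateaus, so the exceptional set is countable.

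The main obstacle is the "inf'' versus "min'' subtlety for both $a_{**}$ and $T_*(a)$, since $a_*$ need not be continuous when $r$ jumps. The workaround is the monotonicity-from-below argument above, which uses the non-decrease of $r$ inherited from the $U_n$ monotonicity to enforce attainment at the left limit point; handling the exceptional set of $|a|$ for the "immediately after $T_*(a)$'' statement is a secondary bookkeeping over the countable plateau family of $a_*$.
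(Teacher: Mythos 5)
Your approach is genuinely different from the paper's and contains a nice observation the paper does not use: by strong induction on the recursion $U_n(t)=n\int_0^t e^{-n(n-1)u}f_n(t-u)\,du$ with $f_n$ built from lower-order terms, each $U_n$ is non-decreasing in $t$, hence so is $r(t)=\limsup_n U_n(t)^{1/n}$. The reformulation $a_*(t)=e^t/r(t)$ via Cauchy--Hadamard is also clean and correct, and the downstream deductions (blow-up dichotomy, $T_*(a)\le\log|a|$, $T^*(a)\le\log|a|$, analyticity for $|a|<a_*(t)$ and $u(t)\notin\sS'(\R)$ for $|a|>a_*(t)$) all translate as you describe.

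However, there is a genuine gap at the step you yourself flag as the main obstacle: the inf-to-min argument for $a_{**}$ (and for $T_*(a)$). Monotonicity of $r$ alone does \emph{not} force attainment. If an infimizing sequence $t_n\downarrow T$ approaches $T$ only from the right, you get $r(T^+)=e^T/a_{**}$, but since $r(T)\le r(T^+)$, one can only conclude $a_*(T)=e^T/r(T)\ge a_{**}$, which says nothing. Concretely, take a non-decreasing $r$ with a jump $r(T^-)=r(T)<r(T^+)$ that is constant on $(T,T+\de)$; then $a_*(s)=e^s/r(T^+)>e^T/r(T^+)=a_{**}$ for $s>T$ while $a_*(T)>a_{**}$, so the infimum is approached only from the right and never attained, with $r$ non-decreasing throughout. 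Your claim that ``an infimizing sequence cannot be forced to approach only from the right'' thus needs justification beyond monotonicity, and I do not see how to supply it by this route. The paper instead obtains attainment from the \emph{local wellposedness in $\F^{-1}\M$ with continuous dependence}: if $u(T_*(a))\in\F^{-1}\M$, the solution continues past $T_*(a)$, and by continuous dependence so does the solution for slightly larger $|a'|$, forcing $a_*(t)\ge|a'|>|a|$ on an interval extending past $T_*(a)$, contradicting the definition of $T_*(a)$ as an infimum; the same mechanism shows $T_{**}<\I$ and hence $(T_{**},a_{**})\in\fB$, i.e.\ $a_*(T_{**})\le a_{**}$. You invoke that local theory only for $a_*(t)\to\I$ as $t\to+0$; it is in fact the load-bearing tool for the attainment as well, and should replace the monotonicity-from-below step.
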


\begin{rem} 
The above result may look contradictory with \cite[Theorem 2.1]{KhWuYuZh}, which reads blow-up at arbitrarily large time, but what is actually proven in \cite{KhWuYuZh} is that  
$\|u(t)\|_{L^2_x(\R/2\pi\Z)}\sim \|u^n(t)\|_{\ell^2_n(\N)}=\I$ for $T_0:=\sum_{k\ge 2}\tf{1}{k^2}\log\tf{3k-3}{2k-3}\le t\le\log\tf{|a|}{3}$, as well as exponential divergence of $u^n(t)$ for $T_0\le t<\log\tf{|a|}{3}$. 
The above result shows that the (first) blow-up time is earlier (which is also obvious from the argument in \cite{KhWuYuZh}), and uniformly bounded by $T_{**}$. Indeed, $3e^t>\tf{e^t}{1-e^{-2t}}$ for $t>T_1:=\tf12\log\tf32$ and \eqref{est a*} imply $|a|>a_*(t)$  for $T_1<t\le\log\tf{|a|}{3}$, while $T_1<\tf14\log 3<T_0$. 
Since the inverse function of $\ta:(\tf12 \log 3,\I)\ni t\mapsto \tf{e^t}{1-e^{-2t}}$ satisfies $\ta^{-1}(a)=\log a-a^{-2}+O(a^{-4})$ as $a\to\I$, 
the time interval $[T_0,\log\tf{|a|}{3}]$ considered in \cite{KhWuYuZh} is inside the set where $u(t)\not\in\sS'(\R)$ with a uniform distance from the boundary.  
\end{rem}

\begin{rem} \label{rem:Cole-Hopf}
The Cole-Hopf transform yields another solution $\ti u$ by the formula
\EQ{
 v:=\sum_{n\in\N_0}\tf{a^n}{n!}e^{inx-n^2t},
 \pq \ti u:=-v_x/v.}
The blow-up occurs at $v(t,x)=0$, before which $\ti u$ is analytic and the same as the solution in $\F^{-1}\X$, namely $\ti u|_{0\le t<T_*(a)}= u|_{0\le t<T_*(a)}$.  

To see what happens after that, let $V(t,x):=\sum_{n\in\N_0}\tf{1}{n!}e^{inx-n^2t}$, which is analytic on $H:=(0,\I)\times\C$ satisfying the heat equation and $v(t,x)=V(t,x-i\log a)$. 
Let $Z:=\{(t,x)\in H\mid V(t,x)=0\}$ and $D:=\{(t,x)\in Z\mid V_x(t,x)=0\}$. 
Then 
\cite[Theorem 4.1]{PKS} implies that $D$ is discrete in $H$. 
For any $(t_0,x_0)\in Z\setminus D$, the Weierstrass preparation theorem yields analytic functions $z(t),A(t,x)$ defined around $(t,x)=(t_0,x_0)$ such that 
\EQ{ \label{zero curve}
 V(t,x)=A(t,x)(x-x_0-z(t)), \pq z(t_0)=0, \pq A(t_0,x_0)\not=0.}
Let $E$ be the set of all $(t_0,x_0)\in Z\setminus D$ such that $\im z(t_0-\e)\not=0\le \im z(t_0-\e)\im z(t_0+\e)$ for all small $\e>0$. Then the analyticity of $z$ implies existence of a positive even integer $m$ such that 
$\im\p_t^kz(t_0)=0\not=\im\p_t^mz(t_0)$ for $k<m$. 
Since $Z=\{(t,x_0+z(t))\}$ in a neighborhood of $(t_0,x_0)$, 
we deduce that $E$ is also discrete in $H$. 
On the other hand, $V(0,x)=e^{e^{ix}}$ has no zero for $x\in\C$, $V(t,x)\to 1$ as $t\to\I$ uniformly for bounded $\im x$, and $|V(t,x)-1|\le e^{e^{-\im x}}-1$. 
Hence for any $\la\in\R$, there is $\de>0$ such that for any $(t,x)\in Z$ with $\im x>\la$ we have $\de\le t \le 1/\de$ and $\im x\le 1/\de$. 
Therefore, 
\EQ{
 S_\la:=\{(t,x)\in D\cup E \mid 0\le \re x\le 2\pi,\ \im x>\la\} } 
is finite for any $\la\in\R$. 
Hence $A:=\{e^{-\im x} \mid (t,x)\in S_{-\I}\}\subset(0,\I)$ is locally finite. 

Now let $a\in\C$ satisfy $|a|\ge a_{**}$ and $|a|\not\in A$. 
Then $t_0:=T_*(a)\in(0,T_{**}]$ is the first $t>0$ such that $0\in v(t,\R)$. 
Let $\B(a):=\{x\in\R \mid v(t_0,x)=0\}$ be the blow-up set, $x_1\in\B(a)$ and $x_0:=x_1-i\log a$. 
Then $(t_0,x_0)\in Z\setminus(D\cup E)$. 
Let $z(t)$ as in \eqref{zero curve}. 
Since $0\not\in v(t,\R)$ for any $t\in(0,t_0)$, we have $\im z(t)\not=0$ for $t<t_0$. 
Since $(t_0,x_0)\not\in E$, the minimal integer $m$ such that $\im\p_t^m z(t_0)\not=0$ is odd. 
In other words, the blow-up for $|a|\not\in A$ occurs when some zeros of $v$ crosses $\R$ in $\C$. It also implies that $\B(a)$ is discrete in $\R$. 
Let $\B(a)\cap[0,2\pi)=\{x_1\etc x_J\}$ such that all $x_j$ are distinct.  
Then we have an open interval $I\ni t_0$ and analytic functions $z_j(t)$ defined on $I$ for $j=1\etc J$ such that $v(t,x)=A_j(t,x)(x-x_j-z_j(t))$ around $(t_0,x_j)$, for some analytic $A_j$ satisfying $A_j(t_0,x_j)\not=0$. Then there exists $\{c_1\etc c_J\}\in\C\setminus\{0\}$ such that 
\EQ{
 R(t,x):=\ti u(t,x) - \sum_{j=1}^J c_j[e^{-i(x-x_j-z_j(t))}-1]^{-1}  }
is analytic for $(t,x)\in I\times\R$, and that each $\im z_j(t)$ changes its sign at $t=t_0$. 
The last summand may be expanded by using 
\EQ{
 [e^{-i(x-z)}-1]^{-1} = \CAS{ \sum_{n\ge 1}e^{in(x-z)} &(\im z<0), \\
 -\sum_{n\le 0}e^{in(x-z)} &(\im z>0).}  }
Since $\ti u$ is analytic for $t<t_0$ and $\supp\F\ti u\subset[1,\I)$, 
we deduce that $\im z_j(t)<0$ for $t<t_0$ and $\im z_j(t)>0$ for $t>t_0$, for all $j$. Therefore
\EQ{
 \ti u(t,x)=\CAS{\sum_{j=1}^J c_j \sum_{n\ge 1}e^{in(x-x_j-z_j(t))}+R(t,x) &(t_0-\e<t<t_0)\\
 \sum_{j=1}^J c_j \sum_{n\le 0}e^{in(x-x_j-z_j(t))}+R(t,x) &(t_0<t<t_0+\e).} }
In particular, $\ti u(t)$ is discontinuous even in $\F^{-1}\sD'(\R)$ at the blow-up time $t=t_0=T_*(a)$. 
\end{rem}

The above construction of blow-up works partly for the equation
\EQ{
 u_t = \De u + 2u\cdot \na u, \pq u(t,x):\R^{1+d}\to\C^d,}
with the general dimension $d\in\N$ and the initial data in the form 
\EQ{
  u_0 = -a\na \F^{-1}\mu, \pq a>0,\pq 0\le \mu\in\M(\R^d), \pq 
 \supp\mu \subset B_{\R^d}(\vv,R), }
for some $\vv\in\R^d\setminus\{0\}$ and $R\in(0,|\vv|/\sqrt{2})$. 
Then the iteration 
\EQ{
 \pt u(t)=-\na\sum_{n\in\N}a^n\F^{-1}U_n(t), \pq U_1(t,\x)=e^{-t|\x|^2}\mu(\x),
 \pr U_n(t,\x)=\sum_{1\le k\le n-1}\int_0^t e^{-(t-s)|\x|^2}\int_{\R^d} (\x-\y)\cdot\y U_k(s,\x-\y)U_{n-k}(s,\y) d\y ds,}
gives the unique global solution $u\in C(\R;\F^{-1}\X(\vv))$ with 
$\supp U_n\subset nB_{\R^d}(\vv,R)$. 
Let 
\EQ{
 \pt \U{b}:=\inf_{\x\in\supp\mu}|\x| \ge |\vv|-R>0, \pq \bar{b}:=\sup_{\x\in\supp\mu}|\x|\in[\U{b}, |\vv|+R], 
 \pr \U{c}:=\inf_{\x,\y\in\supp\mu}\x\cdot\y
  \ge \min(1-\tf{R^2}{|\vv|^2},\tf12)(|\vv|^2-2R^2)>0,
 \pr \bar{c}:=\sup_{\x,\y\in\supp\mu}\x\cdot\y \in[\bar{c}, \bar{b}^2].}
Then the equation of $U_n$ is bounded from below and above by 
\EQ{
 \pt \tf 1n e^{-\bar{b}^2nt} \U{f}_n(t)\mu^{*n} \le U_n(t) \le \tf1n e^{-\U{b}^2nt} \bar{f}_n(t)\mu^{*n}, \pq \U{f}_1(t):= \bar{f}_1(t):= 1,
 \prq \U{f}_n(t) := \U{c} \int_0^t ne^{-\bar{b}^2n(n-1)(t-s)} \sum_{1\le k\le n-1} \U{f}_k(s)\U{f}_{n-k}(s) ds,
 \prq \bar{f}_n(t) := \bar{c} \int_0^t ne^{-\U{b}^2n(n-1)(t-s)} \sum_{1\le k\le n-1}\bar{f}_k(s)\bar{f}_{n-k}(s)ds,}
by induction on $n$. Then in the same way as before, we obtain inductively 
\EQ{
 \pt (\bar{b}^{-2}\U{c}(1-e^{-2\bar{b}^2t}))^{n-1} \le \U{f}_n(t) \le \bar{f}_n(t) \le (\U{b}^{-2}\bar{c})^{n-1}. }
The lower bound implies exponential growth of $\|a^nU_n(t)\|_\M$ as $n\to\I$ if
\EQ{
 a >  \tf{3\sqrt{3}}2\bar{b}^2\U{c}^{-1},}
at some $t\in(0,\I)$, where $(\tf{3\sqrt{3}}2)^{-1}=\max_{t>0}e^{-\bar{b}^2t}(1-e^{-2\bar{b}^2t})$ as before. By the Fourier support and positivity, it implies that $u(t)\not\in\sS'(\R^d)$ for those $t>0$. 
On the other hand, the upper bound implies that $\|a^nU_n(t)\|_\M$ decays exponentially as $n\to\I$ if 
\EQ{
 0\le a< \U{b}^2\bar{c}^{-1},}
uniformly in $t\ge 0$, and so $u$ is an analytic global solution on $t\ge 0$.

The above examples show that the global wellposedness without size restriction as in this paper can not hold within the tempered distributions $\sS'$, 
even with a smoothing linear part, a very mild nonlinearity, and analytic initial data.  

\subsection{Solutions beyond the natural boundary} \label{ss:beyond nb}
For the ODE $\dot u=u^2$, the solution formula \eqref{ODE formula} makes sense and smooth except at most the single time $t=1/u_0$ for each $u_0\in\C$. 
However, the global wellposedness in $\F^{-1}\X$ applies to more exotic equations as well, for example 
\EQ{
 \dot u = \sH(u) := \sum_{n=0}^\I u^{2^n}.}
This $\sH(u)$ is a well-known lacunary series with the natural boundary $|u|=1$, 
so it may not be extended anywhere of $|u|\ge 1$ as a holomorphic function. 
Nevertheless, the global wellposedness holds in $\F^{-1}\X$, which includes initial data such as 
\EQ{
 u_0(x) = a e^{i c\cdot x},}
for any $a\in\C$ and $c\in\R^d_{>0}$. 
If we choose $|a|\ge 1$, then $\sH(u_0(x))$ does not make sense for the holomorphic function $\sH$ at any $x\in\R^d$. 
On the other hand, the local wellposedness holds for $\|u_0\|_{\F^{-1}\M}<1$ and obtained by the iteration estimate as above, where the solutions are the same as those in $\F^{-1}\X$. 
In this sense, the global solutions in $\F^{-1}\X$ may be regarded as extension beyond the analytic continuation on $\C$. 
Actually, we could also consider everywhere divergent Taylor series if the initial Fourier support is uniformly away from the boundary $\x_1=0$, but we do not pursue it here. 

\section{Optimality of the conditions for the solution space} \label{s:opt}
In this section, we investigate the optimality of the function space $\X$ and the stronger support condition for the global wellposedness results, using the simple ODE \eqref{simple ODE} with the solution formula \eqref{ODE formula}.

\subsection{Frequency on the half space} \label{ss:opt HS}
The first example shows that 
$\hat u_0$ is not allowed to have point measures either in negative frequency $\x_1<0$ or on the boundary $\x_1=0$, for global existence in $\F^{-1}\sD'$. 
This means that the half space is optimal for the Fourier support condition, including the zero  boundary condition. 
 
Let $c\in\R^d$ and $u$ be the solution of \eqref{simple ODE} for the initial data 
\EQ{
 u(0,x)=\cos(c\cdot x), \pq \hat u(0,\x)=\tf12[\de(\x-c)+\de(\x+c)].}
Note that for a single point measure $\de(x-c)$ with $c\not=0$, we may choose the direction $\vv=c$ to satisfy $\supp\hat u_0\subset\R^d_{>0}(\vv)$ and so the global wellposedness follows.

Now we confirm blow-up of the above solution $u$ in $\F^{-1}\sD'(\R^d)$. 
Although it may look obvious from the solution formula, the previous section suggests that such very weak solutions might potentially survive beyond our intuition. 
The Cauchy problem is locally wellposed in $\F^{-1}\M$, where we have
\EQ{
 u(t,x)=\tf{\cos(c\cdot x)}{1-t\cos(c\cdot x)}=\sum_{n\ge 0} t^n\cos^{n+1}(c\cdot x),}
which is absolutely convergent in $\F^{-1}\M$ for $|t|<1$ and $u\in C((-1,1);\F^{-1}\M)$. 
If $c\not=0$ then the Fourier transform is in the form 
\EQ{
 \hat u(t,\x)=\sum_{j\in\Z}\sum_{n\ge 0}a_{j,n}t^n \de(\x-jc),}
for certain non-negative coefficients $a_{j,n}$ that may be computed explicitly. In particular at $\x=0$, the binomial expansion of $\cos^{n+1}(c\cdot x)=(\tf{e^{icx}+e^{-icx}}{2})^{n+1}$ yields
\EQ{
 \sum_{n\ge 0}a_{0,n}t^n = \sum_{k=1}^\I \mat{2k \\ k}\tf{t^{2k-1}}{2^{2k}} \ge \sum_{k=1}^\I \tf{t^{2k-1}}{2k} \to \I,}
as $0<t\to 1-0$. The above inequality follows from 
$\tf{a_{0,2k+1}}{a_{0,2k-1}}=\tf{2k+1}{2k+2} \ge \tf{k}{k+1}$ and $a_{0,1}=\tf{1}{2}$. 
Hence, coupling $\hat u(t)$ with a test function $\phi\in\sD(\R^d)$ with small support around $\x=0$, we deduce that $u(t)$ blows up in $\F^{-1}\sD'(\R^d)$ as $t\to 1-0$.  
In the case of $c=0$, it is more direct from the formula $\hat u(t,\x)=\tf{1}{1-t}\de(\x)$. 

\subsection{Decay to the boundary}
For finite measures on $\R^d_{\ge 0}$, the vanishing at the boundary $\x_1=0$ is equivalent to the decay as $\x_1\to+0$. 
It is the case for elements in $\X$, because they are required to be $\C$-valued measures near $\x_1=0$. 
Then it is natural to ask if the latter condition is necessary. 
In other words, can we weaken or replace the topology for the decay as $\x_1\to+0$, or can we include $\hat u_0$ that is not a measure near $\x_1=0$? 
The following shows that the measure space is optimal at least among the family of Sobolev or Besov spaces (in the Fourier side). Let 
\EQ{
 B^{-0}:=\Ca_{\s<0}(B^\s_{1,\I}\cap B^\s_{\I,\I})(\R^d) \emb \sD'(\R^d)}
be the Fr\'echet space endowed with a countable family of norms, e.g., $\|\fy\|_{B^{-1/n}_{1,\I}}+\|\fy\|_{B^{-1/n}_{\I,\I}}$ or $\|\LR{\na}^{-1/n}\fy\|_{L^1\cap L^\I}$ for $n\in\N$. It is also a convolution algebra because 
\EQ{
 \|f*g\|_{B^{2\s}_{p,\I}} \lec \|f\|_{B^\s_{1,1}}\|g\|_{B^\s_{p,1}} \lec \|f\|_{B^{\s/2}_{1,\I}}(\|g\|_{B^{\s/2}_{1,\I}}+\|g\|_{B^{\s/2}_{\I,\I}}), }
for any $\s<0$ and $p\in[1,\I]$, but we can not solve \eqref{simple ODE} in $\F^{-1}B^{-0}_{\ge 0}$. 
More precisely,
\begin{prop} \label{prop:ill B0}
For every $d\in\N$, there is a sequence $\{f_n\}_{n\in\N}\subset\sD_{>0}$ convergent in $B^{-0}$ such that the global solution $u_n\in C(\R;\F^{-1}\X)$ of \eqref{simple ODE} with $\hat u_n(0)=f_n$ is unbounded in $\F^{-1}\sD'(\R^d)$ as $n\to\I$ at any fixed $t\not=0$. 
The iteration sequence $v_n$ defined by \eqref{def itera} with $\hat u_0=\lim_{n\to\I}f_n$ is also unbounded at any fixed $t\not=0$. 
\end{prop}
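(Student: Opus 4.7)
The plan is to construct $f_n \in \sD_{>0}$ as a finite sum $f_n = \sum_{k=1}^n \phi_k$ of complex-valued smooth bumps $\phi_k$ concentrated near $2^{-k}e_1 \in \R^d_{>0}$ at scale $\sim 2^{-k}$, so that the sequence accumulates mass toward the boundary $\x_1 = 0$. For $\{f_n\}$ to be Cauchy in each seminorm of the Fr\'echet space $B^{-0}$—most delicately in the $L^\I$-based norms $\|\cdot\|_{B^{-1/m}_{\I,\I}}$—a non-negative choice has to be avoided: for non-negative $\phi_k$ the low-frequency piece $\|\chi_0 * f_n\|_{L^1} \ge \int f_n$ controls each $\|f_n\|_{B^\s_{1,\I}}$ from below, forcing $\sum_k\int\phi_k<\I$ and hence $f := \lim f_n \in L^1_{\ge 0}\emb\X$, which would preclude any counterexample by Corollary~\ref{GWP-XR}. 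I would therefore take $\phi_k$ to be oscillatory chirps whose amplitudes and phases are tuned so that the dilation estimate $\|\phi_k\|_{B^\s_{p,q}} \lec 2^{k(\s+d(1-1/p))}\|\phi\|_{B^\s_{p,q}}$ gives $\sum_k\|\phi_k\|_{B^\s_{p,q}} < \I$ for every $\s < 0$ and $p\in\{1,\I\}$, hence $B^{-0}$-convergence of $\{f_n\}$.

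Next I would analyse the Fourier convolution $f_n * f_n$ near each base point $2^{-k}e_1$: the dominant contribution to $(\phi_j * \phi_l)(\x)$ at scale $\x_1 \sim 2^{-k}$ comes from pairs with $\min(j,l) \approx k$, and summing the other index over $\{k,\ldots,n\}$ produces a factor of order $n - k$ on a ball around $2^{-k}e_1$, provided the phases are tuned so that the diagonal term $\phi_k * \phi_k$ is non-cancelling there. Picking $\psi\in\sD(\R^d)$ non-negative and supported in that ball yields $|\langle f_n * f_n, \psi\rangle| \to \I$ as $n\to\I$. To propagate this to $\hat u_n(t)$ at fixed $t\not=0$, I would expand the Duhamel identity $\hat u_n(t) = f_n + \int_0^t \hat u_n(s)^{*2}\, ds$ as a Taylor series in $t$, which converges in $\F^{-1}\M$ on the (possibly $n$-dependent) interval $|t|\|f_n\|_\M < 1$: the $t^1$-coefficient of $\langle \hat u_n(t),\psi\rangle$ is $\langle f_n * f_n,\psi\rangle \to \I$, so $|\langle \hat u_n(t), \psi\rangle| \to \I$ for $t$ in an initial interval $(0,\de)$. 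Extension to arbitrary $t\not=0$ then follows from the pointwise solution formula $u_n(t,x) = u_{0,n}(x)/(1 - tu_{0,n}(x))$ together with the global continuity of $\hat u_n$ in $\F^{-1}\X$, exploiting the scaling symmetry $u(t,x)\mapsto a u(at, x)$ of \eqref{simple ODE}, which sends $f_n$ to $a f_n$ and reduces the question at an arbitrary $t$ to the short-time regime of the rescaled (but still $B^{-0}$-convergent) sequence.

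The second claim, about the iteration sequence $v_n$ of \eqref{def itera} with initial data $\hat u_0 := \lim_n f_n \in B^{-0}$, follows directly from the expansion \eqref{iteration seq}: already the $t$-coefficient of $\hat v_n$ for $n \ge 1$ is the convolution square $\hat u_0^{*2}$, and by the above analysis applied to the smooth approximations $f_n*f_n$ one obtains $|\langle \hat v_n(t), \psi\rangle| \to \I$ at any $t\not=0$. The principal obstacle I anticipate is the tension between $B^{-0}$-boundedness (which for any cancellation-free construction forces $f \in L^1\emb\X$ and admits no counterexample) and convolution blow-up (which naturally prefers constructive, non-cancelling interference); the chirp construction is the mechanism that exploits cancellation in the linear seminorms of $B^{-0}$ while preserving non-cancellation in the bilinear convolution $f_n * f_n$, and verifying that the cross terms $\phi_j * \phi_l$ with $j\not=l$ do not undo the diagonal contribution $\phi_k * \phi_k$ on the test region is the delicate technical step.
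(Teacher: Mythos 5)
Your overall strategy---Fourier-side bumps concentrated at shrinking scales near $\x_1=0$, made oscillatory so that their $B^{-0}$ seminorms are small even though their $L^1$ mass grows---is the same as the paper's, and you correctly identified why a nonnegative choice is doomed. However, two of your steps are substantively wrong and a third is left at the level of hand-waving, and the paper resolves them quite differently.

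The most serious issue is your blow-up mechanism. You propose testing $f_n*f_n$ against a \emph{fixed} nonnegative $\psi$ supported near a fixed $2^{-k}e_1$ and accumulating the cross terms $\phi_j*\phi_l$ with $\min(j,l)\approx k$. But for any phase choice of the type you need for the $B^{-0}$ estimate (say $\phi_l(\x)=M_l\,e^{-iN_l\x_1}\chi_l(\x)$ with $N_l$ growing rapidly), the cross term carries a \emph{residual internal phase}:
\begin{equation*}
(\phi_j*\phi_l)(\x) = e^{-iN_l\x_1}\int \chi_j(\y)\,e^{-i(N_j-N_l)\y_1}\,\chi_l(\x-\y)\,d\y.
\end{equation*}
For $j\ne l$ the oscillation $e^{-i(N_j-N_l)\y_1}$ over the fixed bump $\chi_j$ produces cancellation (not accumulation), precisely because the $N_l$ must grow. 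Only the diagonal terms $\phi_l^{*m}$ have a phase that factors out cleanly, leaving a nonnegative profile $\chi_l^{*m}$. The paper's proof exploits exactly this: the test function $\fy_n:=(\bar g_n)^{*(n-1)}$ is chosen to depend on $n$, with support in $\x_1<r_{n-1}-r_n/2$ so small that only powers of the single term $M_ng_n$ contribute, and those are all phase-coherent. The blow-up is then a competition between $(M_nr_{n+1}^{2d})^{n-1}$ (which grows like $e^{cn^2}$ for $M_n=2^{n^2}$, $r_n=1/n!$) and the dual-pairing bound $\|\fy_n\|_{\B^m}\lec N_n^m$ (which grows like $e^{cmn\log n}$ for $N_n=n^{n^2}$), not a divergent sum of cross terms.

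Your $B^{-0}$-convergence argument is also incomplete. The dilation estimate $\|\phi_k\|_{B^\s_{p,q}}\lec 2^{k(\s+d(1-1/p))}$ you quote is for $L^1$-normalized unmodulated bumps; taken alone it \emph{grows} in $B^\s_{\I,\I}$ for $\s$ near $0$, and adding phases does not change a dilation estimate. What actually makes the construction work is a different estimate, which the paper proves: the phase $e^{-iN_k\x_1}$ places $\ck g_k=\F^{-1}g_k$ near $x\approx N_ke_1$ in physical space, so that $\|\LR{\na}^{-\e}g_k\|_{L^1\cap L^\I}\lec N_k^{-\e}$ via the weight $\LR{x}^{-\e}$. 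Choosing $N_k=k^{k^2}$ super-polynomially larger than $M_k=2^{k^2}$ makes $\sum_k M_kN_k^{-\e}<\I$ for every $\e>0$, which is the whole point. Your ``tuning of chirps'' is pointing at this but you do not carry it out, and the quantitative interplay between the three scales $r_k$, $M_k$, $N_k$ is where the proof actually lives.

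Finally, the extension to arbitrary $t\ne0$ via scaling is unnecessary and does not close cleanly as you sketch it: the Duhamel Taylor series converges in $\F^{-1}\M$ only for $|t|\|f_n\|_\M<1$, and $\|f_n\|_\M\to\I$, so your short-time window shrinks with $n$; the rescaling $u\mapsto au(at,\cdot)$ then sends the blow-up question at time $T$ to time $T^2/a$, not to short time. The paper's final bound $|\hat u_n(t)(\fy_n)|\gec t^{-1}(c_1^2tM_nr_{n+1}^{2d})^{n-1}$ is uniform in $t\ne0$ from the outset, so no reduction to short time is needed.
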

Note that the convergence from $\sD_{>0}$ implies the decay of the limit $\hat u_0$ to the boundary $\x_1=0$ in the $B^{-0}$ norm. 
So we may not include any negative Sobolev or Besov space up to the boundary $\x_1=0$, for the local wellposedness, or to construct the solution by iteration.

\begin{proof}[Proof of Proposition \ref{prop:ill B0}]
For $k,n\in\N$, let 
\EQ{
 \pt r_k:=1/k!, \pq N_k:=k^{k^2}, \pq M_k:=2^{k^2},
 \pr g_k:=\chi(4r_{k+1}^{-1}|\x-r_ke_1|)\exp(-iN_k\x_1), \pq f_n:=\sum_{k=1}^n M_k g_k.}
Then $\supp g_k\subset\{|\x-r_k e_1|<r_{k+1}/2\}$ implies $g_k,f_n\in\sD_{>0}$, and we have
\EQ{
 \pt \ck g_k:=\F^{-1}g_k=[r_{k+1}^{d}\ck\chi(r_{k+1} x)e^{ir_k x_1}]|_{x\mapsto x-N_ke_1},}
where $\ck\chi:=\F^{-1}\chi(4|\x|)$, and 
\EQ{
 \|\LR{\na}^{-\e}g_k\|_{(L^1\cap L^\I)_\x}
 \lec \|(1-\De)^d[\LR{x}^{-\e}\ck g_k](x+N_ke_1)\|_{L^1_x} \lec N_k^{-\e},}
uniformly for $0\le\e\le1$ and $k\in\N$.  
Since $M_k/N_k^\e \to 0$ as $k\to\I$ for any $\e>0$, $f_n$ is convergent in $B^{-0}$. 

The global solution $u_n$ is given by the formula 
$\hat u_n(t)=\sum_{j=1}^\I t^{j-1}f_n^{*j}$ as before, 
and the support of $g_1\etc g_n$ implies that for any $\fy\in\sD_{<r_{n-1}-r_n/2}$ we have
\EQ{
 \hat u_n(t)(\fy) = \sum_{1\le j<s_n} t^{j-1}(M_ng_n)^{*j}(\fy)
 = \sum_{1\le j\le n-1} t^{j-1}(M_n\ck g_n)^j(\hat\fy),} 
where $s_n:=\tf{r_{n-1}-r_n/2}{r_n-r_{n+1}/2}=n-\tf{1}{2n+1}$. 
Since $(r_n+\tf12r_{n+1})(n-1)<r_{n-1}-\tf12r_n$, we may take $\fy=\fy_n:=(\bar g_n)^{*(n-1)}$, then
\EQ{
 (2\pi)^d \hat u_n(t)(\fy_n) \pt= \sum_{1\le j\le n-1} t^{j-1}M_n^j\LR{\ck g_n^j|\ck g_n^{n-1}}_{L^2(\R^d)},}
where the summand for $j=n-1$ is equal to 
\EQ{
 t^{n-2}M_n^{n-1}\|r_{n+1}^d\ck\chi(r_{n+1}x)\|_{L^{2(n-1)}}^{2(n-1)} = t^{n-2}M_n^{n-1}r_{n+1}^{d(2n-3)}\|\ck\chi\|_{L^{2n-2}}^{2n-2}, }
while for $j<n-1$ it is bounded by
\EQ{
 t^{j-1}M_n^j r_{n+1}^{d(n+j-2)}\|\ck\chi\|_{L^{n+j-1}}^{n+j-1}.}
Since $\ck\chi\in\sS(\R^d)$ with $\ck\chi(0)>0$, there are $c_0,c_1,c_2,c_3\in(0,\I)$ such that 
\EQ{
 c_0c_1^p \le \|\ck\chi\|_{L^p}^p \le c_2c_3^p}
for all $p\in[1,\I)$. Since $M_nr_{n+1}^d=2^{n^2}((n+1)!)^{-d}\gg A^n$ as $n\to\I$ for any $A\in(1,\I)$, 
we deduce that 
\EQ{
 |\hat u_n(t)(\fy_n)| \gec t^{n-2}M_n^{n-1}r_{n+1}^{d(2n-3)}c_1^{2n-2} \ge t^{-1}(c_1^2tM_n r_{n+1}^{2d})^{n-1} }
for sufficiently large $n$ (depending on $d,t$). 

On the other hand, since $\supp\fy_n\subset\{|\x|<2(n-1)r_n\}\subset\{|\x|\le 2\}$ for all $n\in\N$, if $\hat u_n(t)$ is bounded in $\sD'(\R^d)$, then there exists $m\in\N$ such that for large $n$
\EQ{
 |\hat u_n(t)(\fy_n)| \le m\|\fy_n\|_{\B^m} \le m\|g_n\|_{L^1}^{n-2}\|g_n\|_{\B^m}
 \le C(m)N_n^m.}
Comparison of those two estimates as $n\to\I$ leads to a contradiction, since 
\EQ{
 \pt \log (M_nr_{n+1}^{2d}) = n^2\log 2 - 2d\log(n+1)! \sim n^2,
 \pr \log (N_n^{m/(n-1)}) = \tf{m n^2}{n-1}\log n \sim mn\log n.}
Therefore, $\hat u_n(t)$ is not bounded in $\sD'(\R^d)$ at any $t\not=0$. 
The argument is exactly the same for the iteration $v_n$ because of the formula \eqref{iteration seq}. 
\end{proof}

\subsection{Integrability in the orthogonal directions}
Next we consider if $\X$ needs the integrability condition in $\x^\perp$, which is also quite strong in some sense. For example in the $L^2$-Sobolev spaces, we may easily observe that if $d\ge 2$ then  
\EQ{
 \{F\in H^s(\R^d) \mid \supp\hat F\subset\R^d_{\ge 0}\}\subset \F^{-1}\X \iff s>\tf{d-1}{2}.}
On the other hand, if $\cR\cap\R^d_{<l}$ is bounded for each $l>0$, then we have
$(\F H^s)_\cR\subset \X$ for all $s\in\R$.

First, the necessity of (lower) bounded regularity on $\R^d_{<l}$ for each $l>0$ is observed by the simple example
\EQ{
 \X \ni f_N:=\sum_{n=1}^Nc_n[\de(\x-le_1-ne_2)+(-\p_1)^n\de(\x-le_1+ne_2)] \to f_\I \IN{\sD'_{\ge 0}}, }
as $N\to\I$, where $l>0$ and $\{c_n\}_{n\in\N}\subset(0,\I)$ are arbitrarily fixed. 
For any $\fy\in\sD(\R^d)$ with $\supp\fy\subset B_{\R^d}(2le_1,1)$, we have 
\EQ{
 (f_N*f_N)(\fy) = 2\sum_{n=1}^N c_n^2(\p_1^n\fy)(2le_1),}
so $f_N*f_N$ is not bounded in $\sD'(\R^d)$ as $N\to\I$, no matter how fast $c_n\to 0$ as $n\to\I$. 
Since $\supp f_N^{*j}\subset \{\x_1=jl\}$, it is easy to deduce that the solution $u_N$ of \eqref{simple ODE} with $\hat u_N(0)=f_N$ is also unbounded in $\sD'(\R^d)$ at any $t\not=0$, 
as well as for the iteration sequence for the initial data $\hat u(0)=f_\I$.  

Second, the optimality of $L^1_\x$ may be observed from the example 
\EQ{
 \D_{>0}\ni g_N:=\chi(\x_1-3)\LR{\x^\perp}^{-\s} \chi(|\x|/N) \to g_\I:=\chi(\x_1-3)\LR{\x^\perp}^{-\s} \IN{\B^\I}}
as $N\to\I$, where $\s\in(0,d-1)$ is arbitrarily chosen.  
Note that $\supp g_N,\supp g_\I\subset \R^d_{(1,5)}$. 
For $n\in\N$ satisfying $\tf{n-1}{n} < \tf{\s}{d-1}$, we have 
\EQ{
 g_N^{*n} \to g_\I^{*n} = \chi^{*n}(\x_1-3n)(\LR{\x^\perp}^{-\s})^{*n} \IN{\sD'_{\ge 0}},\pq
 (\LR{\x^\perp}^{-\s})^{*n} \sim \LR{\x^\perp}^{(n-1)(d-1)-n\s}, }
but if $\tf{n-1}{n}>\tf{\s}{d-1}$ then $g_N^{*n}$ is unbounded as $N\to\I$ in $L^1_{loc}$ and also in $\sD'$. 

Thus in the same way as before, we observe the unboundedness of solutions for $\hat u(0)=g_N$ and for the iteration sequence of $\hat u(0)=g_\I$. 
This means that the condition $X^l\hat u_0\in W^{-m,1}(\R^d)$ may not be replaced with $W^{-m,p}(\R^d)$, for any $p>1$, in order to keep the local wellposedness of \eqref{simple ODE}. 
Note also that $\F^{-1}g_N\to \F^{-1}g_\I$ converges in $H^{s}(\R^d)$ for $s<\s-\tf{d-1}{2}$. 
Hence $u_0\in H^s(\R^d)$ is not enough for $s<\tf{d-1}{2}$, even if the Fourier support is in a bounded interval of $\x_1$ away from $0$. 
However, the above example does not preclude the endpoint case $s=\tf{d-1}{2}$, 
for which the iterated convolution grows only logarithmically for $|\x^\perp|\to\I$. 

\subsection{Cone support for unbounded coefficients}
For $d\ge 2$ and general smooth coefficients $\hat L$, the support restriction to a cone region is necessary, as the following example shows. Here we include a non-negative linear part, namely
\EQ{ \label{u2eq}
 u_t = Lu + u^2, \pq 0\le \hat L(\x)\in C^\I(\R^d).}
Suppose that $\hat u_0\in L^1(\R^d)$, supported and continuous on $\{|\x^\perp|\le R_0(\x_1)\}$, and positive on the interior, for some continuous $R_0:[0,\I)\to[0,\I)$ satisfying $R_0(0)=0$.

Suppose for contradiction that $R_0(a)/a\to+\I$ as $a\to+0$ but $u$ is global for $t\ge 0$.
Then the positivity of the data and the coefficients implies
\EQ{ \label{u2eq lbd}
 \CAS{\hat u(t) \ge \hat u_0,\\
 \hat u \ge \hat u_1 \implies \hat u(t) \ge \int_0^t \hat u_1(s)*\hat u_1(s)ds.} }
Iteration of those inequalities yields a lower bound
\EQ{
 t>0,\ \x_1>0 \implies \hat u(t,\x) \ge \hat u_*(t,\x)>0,}
for some continuous $\hat u_*$ independent of $\hat L$. 
$\hat u_*$ is positive everywhere on $\x_1>0$ because of $R_0(a)/a\to\I$ and $\hat u_0>0$ on $|\x|<R_0(\x_1)$. 
Then we may choose $\hat L(\x)$ such that
\EQ{
 1\le \x_1 \le 2 \implies e^{\hat L(\x)}\hat u_*(1,\x) \ge 1.}
Then $\hat u(2,\x)\ge 1$ on $\R^d_{[1,2]}$.
Injecting it into \eqref{u2eq lbd} leads to blow-up of $\hat u$ for $t>2$ because $1$ is not integrable for $\x_2\in\R$. 

So we need $\limsup_{a\to+0}R_0(a)/a<\I$ to avoid such blow-up. 
Then Lemma \ref{lem:super-additive} yields $\cR=\{|\x|\le R(\x_1)\}\supset\supp\hat u_0$ so that the global wellposedness holds in $\F^{-1}\X_\cR$.

\section{Global solutions with locally integrable Fourier transform} \label{s:L1}
In this section, we consider solutions in $\F^{-1} L^1_{loc}(\R^d)\subset \F^{-1}\sD'(\R^d)$, with the Fourier support in the half space, assuming that $\hat L$ decays on the boundary. 
This allows considerable simplification of both the function spaces and the construction of unique global solutions. 
Note that for any $\nu\in C_0^\ndc$, the closure of $\sD_{>0}$ in $\X^{0,\nu}$ is the weighted $L^1$ space 
\EQ{
 \X^{0,\nu}_0:=\{F\in L^1_{loc}(\R^d) \mid \supp F\subset\R^d_{\ge 0},\ \nu(\x_1)^{-1}F\in L^1(\R^d)\}.}
The solutions in this section belong to such spaces (upto rotation in $\x$). 
Note that $u_0\in\F^{-1}L^1_{loc}(\R^d)$ is decaying at spatial infinity in the average sense \eqref{dec L1loc}. 

Let $\cR\subset\R^d$ be a closed subset satisfying $\cR+\cR\subset\cR$. 
Let $\fp:\cR\to[0,\I)$ be continuous and super-additive, namely $\fp(\x+\y)\ge\fp(\x)+\fp(\y)$ for all $\x,\y\in\cR$. 
Let $\hat L,\hat M,\hat N: \cR\to M(n_1), M(n_2,n_1), M(n_1,n_3)$ be measurable, satisfying for some 
$0<c_0\le 1\le C_0<\I$ 
\EQ{ \label{LMN-p bd}
 \s_{\max}(\tf{\hat L+\hat L^*}{2}) \le (1-c_0)\fp, \pq \|\hat N\|_{M(n_1)}+\|\hat M\|_{M(n_2,n_1)} \le C_0\LR{\fp},}
almost everywhere on $\cR\subset\R^d$, 
where $\s_{\max}$ denotes the maximal eigenvalue of the self-adjoint matrix. 
The above condition of $\hat L$ implies $\|e^{t\hat L(\x)}\|_{M(n_1)} \le e^{t(1-c_0)\fp(\x)}$ 
a.e. on $\cR$. Hence, all of $e^{t\hat L},\hat N,\hat M$ are locally bounded on $\cR$. 

The continuity and super-additivity of $\fp$ may be implemented as follows. If \eqref{LMN-p bd} is satisfied by a locally bounded $\fp_0:\cR\to[0,\I)$ in place of $\fp$, such that  
\EQ{
  \sup\{|\x|^{-1}\fp_0(\x) \mid \x\in\cR,\ |\x|<\de\}<\I}
for some $\de>0$, then Lemma \ref{lem:super-additive} yields a continuous and super-additive $\fp:\cR\to[0,\I)$ satisfying $\fp_0\le\fp$, hence \eqref{LMN-p bd} holds for this $\fp$. 

A difference from the previous setting in $\X$ is that $\s_{\max}(\tf{\hat L+\hat L^*}{2})$ is required to decay as $\fp\to 0$, prohibiting uniform instability of $L$ near $\x_1=0$ for $\cR=\R^d_{\ge 0}$. 
It is actually possible to allow $\hat L$ to be merely bounded as $\fp\to0$ as before, but it will complicate the weight for the solution space (double-exponential in time). 
Since many physical examples satisfy the decay condition, we choose the simplicity in this section.

The nonlinearity $\sH:\C^{n_2}\to\C^{n_3}$ is assumed to be holomorphic around $0\in\C^{n_2}$ and vanishing to the order $\sH(z)=O(z^{k_0+1})$ for some $k_0\in\N$, namely 
\EQ{
 \sH(z) = \sum_{\al\in \N_0^{n_2},\ |\al|\ge k_0+1}\tf{1}{\al!}\sH^{(\al)}(0)z^\al}
is absolutely convergent in a $0$-neighborhood.  
As $k_0\ge 1$ increases, we may take the bigger space of initial data. 
In the previous setting in $\X$, it was fixed to $k_0=1$. 
Under those assumptions on $L,M,N,\sH$, we consider the equation as before, namely
\EQ{ \label{CP}
 u= e^{tL(D)}u_0 + \int_0^t e^{(t-\ta)L(D)}N(D)\sH(M(D)u(\ta))d\ta.}
For the initial data, we introduce a class of weighted $L^1$ spaces. 
For any $k_0>0>s$, $\cR\subset\R^d$ and $\fp:\cR\to[0,\I)$, let 
$\cZ_\fp^{k_0,s}(\cR)$ be the set of all Banach spaces $Z\emb L^1_{loc}(\R^d)$ satisfying for all $\fy\in Z$
\begin{enumerate}
\item $\supp\fy\subset \cR$ and $\||\fy|\|_Z=\|\fy\|_Z$. 
\item $\sup_{c\in\cR}\|\fy(\x-c)\|_Z\le\|\fy\|_Z$. \label{Z-trans}
\item $\|W_{s,k_0}(\fp)\fy\|_{L^1(\cR)}\le \|\fy\|_Z$, \label{Z-L1}
\end{enumerate}
where the decreasing function $W_{s,k_0}:(0,\I)\to(0,\I)$ is defined by
\EQ{
 W_{s,k_0}(\ro):=\max(\ro^{-1/k_0},\ro-1/s)e^{s\ro}.}
$W_{s,k_0}(\fp)$ is decreasing for $\fp,-s,k_0>0$.

A typical setting is the following. Let $P:[0,\I)\to[0,\I)$ be continuous and super-additive. 
Then a maximal element of $\cZ^{k_0,s}_{P(\x_1)}(\R^d_{\ge 0})$ is the weighted $L^1$ on the half space:
\EQ{ \label{L1-Z}
 \pt L^{k_0,s}_P:=\{\fy\in L^1_{loc}(\R^d) \mid \supp\fy\subset\R^d_{\ge 0},\ \|\fy\|_{L^{k_0,s}_P}<\I\}, 
 \prq \|\fy\|_{L^{k_0,s}_P}:=\|W_{s,k_0}(P(\x_1))\fy\|_{L^1(\R^d_{\ge 0})}, }
for which the property \eqref{Z-trans} follows from the monotonicity of $W_{s,k_0}$.   
Moreover, 
\EQ{ 
 1/W_{s,k_0}(P) \in C_0^\ndc, \pq  L^{k_0,s}_P=\X^{0,1/W_{s,k_0}(P)}} 
with equivalent norms. Conversely, for any $\nu\in C^\ndc_0$ satisfying
\EQ{
 \limsup_{t\to+0}t^{-1/k_0}\nu(t)<\I,}
let $P(t):=t\sup_{0<\ta<t}\nu(\ta)^{k_0}/\ta$.  
Then $P$ is continuous and super-additive, satisfying for any $s<0$, $\nu\le P^{1/k_0}\le C_{s,k_0}/W_{s,k_0}(P)$ with some constant $C_{s,k_0}\in(0,\I)$ depending only on $s<0<k_0$. 
Hence $\X^{0,\nu}_0\emb L^{k_0,s}_P$.

We may also consider anisotropic $L^p$ spaces. Let $d_0\in\{1\etc d\}$, $\vv\in\R^d\setminus\{0\}$, $\la>0$, 
\EQ{
 P_\vv(\x):=\vv\cdot\x, \pq \cR:=\{\x\in \R^d_{\ge 0}(\vv): |\bar\x| \le \la P_\vv(\x)\},}
where we denote $\x=(\bar\x,\ti\x)\in\R^{d_0}\times\R^{d-d_0}$ for any $\x\in\R^d$. 
Then 
\EQ{ \label{anis-Z}
 \pt Z:=\{\fy\in L^1_{loc}(\R^d) \mid \supp\fy\subset \cR,\ \|\fy\|_Z:=\|(P_\vv^{-\s}+1)\fy\|_{L^{r}_{\bar\x} L^{1}_{\ti\x}}/C<\I\},}
belongs to $\cZ^{k_0,s}_{P_\vv}(\cR)$ for any $s<0$ for some $C\in(0,\I)$, provided that $\tf{d_0}{r'}+\s> \tf{1}{k_0}$. 
Indeed, the property \eqref{Z-trans} follows from the translation invariance of $L^rL^1$ together with monotonicity of $P_\vv$ on $\R^d_{\ge 0}(\vv)$, while \eqref{Z-L1} follows by H\"older:
\EQ{
 \pt\|\max(P_\vv^{-1/k_0},P_\vv)e^{sP_\vv}\fy\|_{L^1_\x(\cR)} 
 \pn\lec \|1_{|\bar\x|<1}|\bar\x|^{\s-1/k_0}+|\bar\x|e^{s|\bar\x|}\|_{L^{r'}_{\bar\x}} \| (P_\vv^{-\s}+1)\fy\|_{L^r_{\bar\x} L^1_{\ti\x}}. }

The following is the unique global existence in those spaces. 
\begin{thm} \label{thm:global L1}
Let $d,n_1,n_2,n_3,k_0\in\N$ and let $\cR,\fp,L,M,N,\sH$ be as above. 
Then for any $s<0$,  $a\ge 2$ and $B>0$, there exists $b\in[1,\I)$ such that for any $Z\in\cZ_\fp^{k_0,s}(\cR)$ and any $u_0\in\F^{-1} L^1_{loc}(\R^d)^{n_1}$ satisfying $\supp\hat u_0\subset\cR$ and $\|e^{s\fp^a}\hat u_0\|_Z\le B$, 
there exists a unique global solution $u$ of \eqref{CP} satisfying 
\EQ{
 e^{b(s\fp^a-t\fp)}\hat u(t) \in [Z(L^\I_t(0,\I))\cap C([0,\I);Z)]^{n_1}.} 
\end{thm}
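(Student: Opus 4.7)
I would solve \eqref{CP} by Banach's contraction principle in the Banach space
\[
X_b := \bigl\{\hat u : [0,\I)\to Z^{n_1} \text{ continuous} : \|\hat u\|_{X_b}:=\sup_{t\ge 0}\|w_b(t,\cdot)\hat u(t,\cdot)\|_Z<\I\bigr\},\quad w_b(t,\x):=e^{b(s\fp(\x)^a-t\fp(\x))},
\]
with $b\ge 2$ to be chosen large in terms of $B,s,a,c_0,k_0,\sH$. The central property of the weight is its \emph{submultiplicativity under convolution on $\cR$}: for $\x=\sum_{j=1}^k\y_j$ with $\y_j\in\cR$, the super-additivity of $\fp$ together with the elementary inequality $(\sum\fp_j)^a\ge\sum\fp_j^a$ (valid for $a\ge 1$, $\fp_j\ge 0$) yields $w_b(t,\x)\le\prod_j w_b(t,\y_j)$. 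Combined with property~(2) of $Z$, this gives a Young-type bound $\|w_b(f_1*\cdots*f_k)\|_Z\le\prod_{j<k}\|w_b f_j\|_{L^1}\cdot\|w_b f_k\|_Z$ for non-negative $f_j$ supported in $\cR$.

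The linear estimate is immediate: $|w_b(t,\cdot)e^{t\hat L}\hat u_0|\le e^{bs\fp^a-(b-1+c_0)t\fp}|\hat u_0|\le e^{s\fp^a}|\hat u_0|$ for $b\ge 2$, using $\|e^{t\hat L(\x)}\|\le e^{(1-c_0)t\fp(\x)}$ from \eqref{LMN-p bd}; by monotonicity of $Z$ in absolute value (property~(1)), the linear term has $X_b$-norm bounded by $\|e^{s\fp^a}\hat u_0\|_Z\le B$. For the nonlinear term, expand $\hat\sH(\hat M\hat u)=\sum_{|\alpha|\ge k_0+1}(\sH^{(\alpha)}(0)/\alpha!)(\hat M\hat u)^{*\alpha}$ and estimate each $k$-fold piece ($k\ge k_0+1$) via $|\hat M|,|\hat N|\le C_0\LR\fp$:
\[
|w_b(t,\cdot)e^{(t-\tau)\hat L}\hat N(\hat M\hat u(\tau))^{*k}|\le C_0^{k+1}e^{-(b-1+c_0)(t-\tau)\fp(\x)}\LR{\fp(\x)}\,w_b(\tau,\x)\,(\LR\fp|\hat u(\tau)|)^{*k}(\x).
\]
Because $a\ge 2$ makes $e^{bs\fp^a/2}\LR\fp^m\le C(m,b,s,a)$ for each $m\ge 0$, I absorb the $k+1$ copies of $\LR\fp$ into a fraction of the $e^{bs\fp^a}$ decay, replacing $w_b$ by the still submultiplicative weight $\tilde w_b(t,\x):=e^{b(s\fp^a/2-t\fp)}$ at the cost of a factor $C^{k+1}$. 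Distributing $e^{-(b-1+c_0)(t-\tau)\fp(\x)}\le\prod_j e^{-(b-1+c_0)(t-\tau)\fp(\y_j)}$ across the convolution and invoking the Young-type bound above yields a product in which each factor gains the time-integral factor $\int_0^t e^{-(b-1+c_0)(t-\tau)\fp}d\tau\lesssim 1/((b-1+c_0)\fp)$, i.e.\ a gain of order $1/b$ after choosing $b$ large.

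The $1/\fp$ singularity at $\fp=0$ coming from the $\tau$-integration is controlled by property~(3) of $Z$: the embedding $Z\hookrightarrow L^1(W_{s,k_0}(\fp))$ with $W_{s,k_0}(\fp)\sim\fp^{-1/k_0}$ near the origin restricts how singular $\hat u$ can be there, and the geometry of $\cR$ (closure under addition plus super-additivity of $\fp$) ensures that the $k$-fold convolution of such functions vanishes to order $(k-1)(1-1/k_0)$ near $\fp=0$, overcoming $1/\fp$ for any $k\ge k_0+1$. Summing in $\alpha$ via the analyticity of $\sH$ at $0$ produces a total nonlinear bound of the form $\tilde\sH(C\|\hat u\|_{X_b})\cdot\|\hat u\|_{X_b}^{k_0+1}\cdot O(b^{-\gamma})$ for some $\gamma>0$; choosing $b$ large gives contraction on the ball $\bar B_{X_b}(0,2B)$, whose fixed point is the unique global solution. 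The Lipschitz difference estimate, built analogously from $\tilde\sH^{(1)}$, secures uniqueness, and continuity of $t\mapsto w_b(t,\cdot)\hat u(t)$ into $Z$ follows from the Duhamel form by dominated convergence once the uniform $X_b$-bound is in hand.

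\textbf{The main obstacle} is the combined bookkeeping of the $\LR\fp^{k+1}$ factors from $\hat N,\hat M$ against the $1/\fp$ singularity from the $\tau$-integration near $\fp=0$. Its resolution requires three ingredients working in concert: the condition $a\ge 2$ permits absorbing all $\LR\fp^{k+1}$ into half of the Gaussian-type decay $e^{bs\fp^a/2}$ with a uniform-in-$k$ constant; the super-additivity of $\fp$ is used twice, first to secure submultiplicativity of $w_b$ on $\cR$ and then to distribute the linear semigroup's exponential time-decay across convolution factors; and the $L^1(W_{s,k_0}(\fp))$ structure of $Z$ translates the $(k_0+1)$-fold vanishing of $\sH$ at $0$ into sufficient quantitative regularity of iterated convolutions at the frequency origin to absorb the $1/\fp$ singularity.
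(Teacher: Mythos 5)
Your overall scheme — contraction in the exponentially weighted space $\|e^{b(s\fp^a-t\fp)}\hat u(t)\|_{Z(L^\I_t)}$, the linear estimate from $\s_{\max}(\tf12(\hat L+\hat L^*))\le(1-c_0)\fp$, the super-additivity of $\fp$ and of $\fp^a$ making the weight submultiplicative under convolution on $\cR$, the Young-type bound via property (2), and the $1/b$ gain from the $\ta$-integration — is the same as the paper's, and those parts of the argument are sound. The genuine gap is in your treatment of the $1/\fp(\x)$ singularity that the $\ta$-integration produces at the boundary of $\cR$. Your claim that ``the $k$-fold convolution vanishes to order $(k-1)(1-1/k_0)$'' is quantitatively wrong: for $k_0=1$ that exponent vanishes identically, so by your own accounting the $1/\fp$ would never be overcome, yet the theorem is asserted (and proved) for all $k_0\in\N$. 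The paper's actual mechanism is concrete and you have no substitute for it: on the simplex $\{\x=\x^0+\cdots+\x^k\}$ with $\x^j\in\cR$, super-additivity plus AM--GM give $\fp(\x)\ge\sum_{j\ge 1}\fp(\x^j)\ge k\bigl[\prod_{j\ge 1}\fp(\x^j)\bigr]^{1/k}$, so $1/\fp(\x)\lesssim\prod_{j\ge 1}\fp(\x^j)^{-1/k}$; each factor $\fp(\x^j)^{-1/k}$ is then absorbed by the $L^1(W_{s,k_0})$ control built into $Z$ precisely because $1/k\le 1/k_0$ for $k\ge k_0$. This is done inside a three-way frequency split governed by a parameter $\te$ (all small / two large / mixed), and the split is essential: for the lowest order $k=k_0$ the AM--GM trade produces no smallness, so the contraction factor must come entirely from $1/b$, whereas for $k>k_0$ one gains $\te^{1/k_0-1/k}\to 0$; your proposal conflates these cases.

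Two secondary inaccuracies: the sentence ``each factor gains the time-integral factor'' misrepresents the bookkeeping — the $\ta$-integral is a single integral and yields a single factor $\lesssim 1/(b\fp(\x))$, not one per convolution factor; and replacing $w_b$ by $\tilde w_b:=e^{b(s\fp^a/2-t\fp)}$ mid-estimate, if taken literally, breaks the closedness of the fixed-point set, so you would have to absorb the $\LR\fp$ factors without leaving the $w_b$-ball, as the paper does by keeping $e^{bs\Om}$ explicit (with $\Om=\fp(\x)^a-\sum_j\fp(\x^j)^a\ge 0$) and spending it only in the regions where it is actually large.
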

\begin{proof}
We show that the equation \eqref{CP} defines a contraction mapping on closed balls in the Banach space of measurable $\hat u:[0,\I)\times\cR\to\C^{n_1}$ with the norm 
\EQ{
 \|u\|_{Z^{b,s}_\fp} := \|e^{b(s\fp^a-t\fp)}\hat u(t)\|_{Z(L^\I_t(0,\I))},}
if $b\ge 1$ is chosen large enough. 
For the initial data part, $\s_{\max}(\tf12(\hat L+\hat L^*))\le \fp$ on $\cR\supset\supp\hat u_0$ implies
\EQ{
 \|\F e^{t L}u_0\|_{Z^{b,s}_\fp} \le \|e^{t \hat L}\hat u_0\|_{Z^{1,s}_\fp} \le\|e^{s\fp^a}\hat u_0\|_Z \le B.}
For the nonlinear Duhamel part, it suffices to estimate for $k\ge k_0$
\EQ{ \label{Duhamel int}
 U(t,\x) \pt:=e^{b(s\fp^a-t\fp)}\int_0^t |e^{(t-\ta)\hat L}\LR{\fp}(\LR{\fp}\hat u^0)*\cdots*(\LR{\fp}\hat u^{k})(\ta)|d\ta
  \pr\le \int_0^t e^{(\ta-t)c_0b\fp}\fp G(\x)d\ta = \tf{1-e^{-tc_0b\fp}}{c_0b}G(\x),}
in terms of $U_j(\x):=\|e^{b(s\fp^a-t\fp)}\hat u^j\|_{L^\I_t(0,\I)}$, where
\EQ{ \label{def G}
 \pt G(\x):= \int_{\x=\x^0+\cdots+\x^k} \tf{\LR{\fp}}{\fp}e^{bs\Om}\prod_{0\le j\le k}\LR{\fp(\x^j)}U_j(\x^j) d\x^1\cdots d\x^k, 
 \prq \Om:=\fp(\x)^a-\sum_{0\le j\le k} \fp(\x^j)^a.}
The above estimate by $G$ used the super-additivity of $\fp$ and the assumption on $\hat L$ in the form $\|e^{(t-\ta)\hat L}\|_{M(n_1)} \le e^{(t-\ta)(1-c_0)\fp}$ and $\fp(\x) \ge \fp(\x^0)+\cdots+\fp(\x^k)$. 

In estimating $G$, by symmetry, we may assume that $\fp(\x^0)=\max_{0\le j\le k} \fp(\x^j)$. 
We split this domain of $\x^0\etc\x^k$ into three regions, depending on a parameter $\te\in(0,1]$. 
The super-additivity of $\fp$, the binomial expansion and Young imply  
\EQ{
 \pt \Om \ge \fp(\x)^a-\fp(\x^0)^a-\fp(\x-\x^0)^a \ge a\fp(\x^0)^{a-1}\fp(\x-\x^0),
 \pr \fp(\x) \ge \fp(\x^1)+\cdots+\fp(\x^k) \ge k[\fp(\x^1)\cdots\fp(\x^k)]^{1/k}.}
If $\fp(\x^0)\le \te$ then $\fp(\x^j)\le \te$ for all $j$, so we just discard $\Om\ge 0$ to get 
\EQ{
 G|_{\fp(\x^0)\le \te} \le \LR{\te}^{k+2} U_0*V_1^0*\cdots*V_k^0, \pq  V_j^0:=\fp^{-1/k}U_j|_{\fp\le\te}.}
If $\fp(\x^0)\ge \te$ and $\fp(\x-\x^0)\ge \te$, then we have $\fp(\x)\ge\te$ and 
\EQ{
 \pt \Om \ge (a-1)\te\fp(\x^0)^{a-1}+\te^{a-1}\fp(\x-\x^0) \ge \te^{a-1}[\fp(\x^0)+\cdots+\fp(\x^k)],
 \prq G|_{\fp(\x^0),\fp(\x-\x^0)\ge\te} \lec \tf{\LR{\te}}{\te}\tf{\LR{\s}}{|\s|} U_0*V_1^1*\cdots*V_k^1, \pq V_j^1:=\LR{\fp}e^{\s\fp}U_j,}
where $\s:=b\te^{a-1}s<0$ and the $\x^0$ dependent weight is bounded by $\LR{\ro}e^{\s\ro}\lec |\s|^{-1}\LR{\s}$. 
If $\fp(\x^0)\ge \te\ge \fp(\x-\x^0)$, then $\fp(\x^j)\le \te\le\fp(\x)$ for $j=1\etc k$, and
\EQ{
 \pt \Om \ge a\fp(\x^0)^{a-1}k[\fp(\x^1)\cdots \fp(\x^k)]^{1/k},
 \pq e^{bs\Om} \le (b|s|\Om)^{-1},  
 \prq G|_{\fp(\x^0)\ge \te \ge \fp(\x-\x^0)}  \le \tf{\LR{\te}^2}{\te^2}\tf{\te}{ak|\s|} U_0*V_1^0*\cdots*V_k^0.}
Thus we obtain for $k\ge k_0$ 
\EQ{ \label{est G-conv}
\pt G|_{\fp(\x^0)=\max_j\fp(\x^j)} \lec \tf{\LR{\te}^{k+2}}{\te}\tf{\LR{\s}}{|\s|} U_0*V_1^2*\cdots*V_k^2, 
 \pq V_j^2:=V_j^0+V_j^1.}

Since all $U_j,V_j$ are supported on $\cR$, and $Z\in\cZ^{k_0,s}_\fp$, the property \eqref{Z-trans} of $\cZ^{k_0,s}_\fp$ implies 
\EQ{
 \|U_0*V_1^2*\cdots*V_k^2\|_Z \le \|U_0\|_Z \prod_{1\le j\le k}\|V_j^2\|_{L^1},}
while the property \eqref{Z-L1} implies for $\s\le s$ that $\|V_j^2\|_{L^1} \le \ka \|U_j\|_Z$ 
with  
\EQ{
 \ka:=\sup_{\ro>0}\tf{\ro^{-1/k}|_{\ro\le\te}+\LR{\ro}e^{\s\ro}}{\max(\ro^{-1/k_0},\ro-1/s)e^{s\ro}} \le \te^{-1/k+1/k_0}e^{-s\te}+\LR{s}e^{(\s-s)\te}. }
For any $k_0\in\N$, $s<0$ and $\e>0$, there are $\te\in(0,1)$ and $b\in[1,\I)$ such that $\ka\le\e$ for all $k\ge k_0+1$. For $k=k_0$, we choose $\te=1$. Then  
\EQ{
 \|U_0*V_1^2*\cdots*V_k^2\|_Z \le \CAS{(2e^{-s})^{k_0}\prod_{0\le j\le k_0}\|U_j\|_Z &(k=k_0)\\ 
  \e^k \prod_{0\le j\le k}\|U_j\|_Z &(k\ge k_0+1).} }
Injecting it into \eqref{Duhamel int}, \eqref{def G} and \eqref{est G-conv}, we may exploit the denominator $b$ in \eqref{Duhamel int}, taking $b\ge 1$ larger if necessary. Thus we obtain 
\begin{lem}
For any $-\I<s<0<c_0,\e<1$, $a\ge 2$ and $k_0\in\N$, there exists $b\in[1,\I)$ such that for any $\cR,\fp,L$ as above, any $Z\in\cZ^{k_0,s}_\fp(\cR)$, any $k\in\N$ with $k\ge k_0$ and any $u^0\etc u^k$ satisfying $\supp \hat u^j(t)\subset\cR$ for all $t\ge 0$, we have 
\EQ{
 \|U\|_{Z(L^\I_t(0,\I))} \le \e^k \prod_{0\le j\le k}\|\hat u^j\|_{Z(L^\I_t(0,\I))},}
for $U$ defined by \eqref{Duhamel int}. 
\end{lem}
The above lemma yields an estimate on the Duhamel nonlinear part 
\EQ{
  \|\int_0^t e^{(t-\ta)L} N(D)\sH(M(D)u)d\ta\|_{Z^{b,s}_\fp}
 \le \sum_{|\al|\ge k_0} C_0 \tf{\|\p^\al \sH(0)\|_{\C^{n_3}}}{\al!}(C_0\e\|u\|_{Z^{b,s}_\fp})^{|\al|}, }
and a similar estimate for the difference.  
Hence, if we choose $\e\in(0,1)$ small enough depending on $B,\sH,C_0$, then 
the map $u\mapsto e^{tL}u_0+\int_0^t e^{(t-\ta)L}\sH(u(\ta))d\ta$ is a contraction on 
the closed ball $\{u \in Z^{b,s}_\fp \mid \|u\|_{Z^{b,s}_\fp}\le B\}$, 
and the unique global solution of \eqref{CP} is obtained as its unique fixed point. 

Since the Duhamel formula is continuous in $t$ at each $\x$, 
$u\in C([0,\I);Z)$ follows if $Z$ satisfies the dominated convergence. 
Otherwise, it suffices to take $b$ slightly bigger. 
\end{proof}

\section{Examples of the equations} \label{s:Ex}
We apply the above abstract results to some typical nonlinear evolution equations which come from  physics, including the Navier-Stokes, the Euler, the nonlinear Klein-Gordon equations, as well as some nonlinear and nonlocal equations. 
The application is straightforward, leading to new global wellposedness and unique existence results for more wild initial data than the standard settings.

\subsection{Incompressible Navier-Stokes and Euler equations}
Consider the Cauchy problem for the  incompressible
Navier-Stokes and Euler equations:
\begin{align}
u_t - \varepsilon_1 \p_1^2   u -  \varepsilon_\perp \De_\perp u +u\cdot \nabla u +\nabla p=0, \ \  {\rm div}\, u=0,  \label{NS1}
\end{align}
where   $u(t,x):\R^{1+d}\to\R^d$ denotes the flow
velocity vector and $p:\R^{1+d}\to\R$ describes the scalar pressure, 
$ \Delta_\perp :=\partial^2_2 +\cdots+ \partial^2_d $, and $\varepsilon_1,\e_\perp \geq 0$ are some constants.   In the Ekman boundary layers for rotating fluids (cf.~\cite{DeGr2000}), it makes sense to consider the different viscosities of the type $\varepsilon_1\ge 0$ and $\varepsilon_\perp>0$.   If $\varepsilon_1= \varepsilon_\perp =0$, then \eqref{NS1} is the Euler equation.
As usual, \eqref{NS1} may be rewritten in the equivalent form: 
\begin{align}
  u_t  - \varepsilon_1 \p_1^2   u -  \varepsilon_\perp \De_\perp u+\mathbb{P}\ \textrm{div}(u\otimes u)=0,
 \label{NS mod}
\end{align}
where $\mathbb{P}:=I-{\rm div}\, \Delta^{-1}\nabla$ is the projection onto the divergence free vector fields,  $I\in M(d)$ is the identity matrix. 
We need to extend the equation \eqref{NS mod} to the complex values $u:\R^{1+d}\to\C^d$, since otherwise our solution space contains only the trivial $0$. 
We may also extend the coefficients to $\e_1,\e_\perp\in\C$. 

Since the symbols for $L:=\e_1\p_1^2+\e_\perp\De_\perp$ and $N:=\mathbb{P}\Div$ are given by 
\EQ{
 (e^{t\hat L}\hat N)_{jk}=e^{-t\e_1\x_1^2-t\e_\perp|\x^\perp|^2}i(\x_k-\x_j) \pq(j,k=1\etc d),}
the condition \hyptag{C-s} is satisfied with $\vv=e_1$ and $\te=1/2$ if $\e_\perp>0$. 
Hence, Corollary \ref{GWP-XR} implies that the Cauchy problem for \eqref{NS mod} is globally wellposed in $\F^{-1}\X^d$ for $t\ge 0$, for any $d\ge 2$, $\e_1\in\C$ and $\e_\perp>0$. 
If both $\e_1,\e_\perp>0$, then it does not depend on the direction of the half space: For any $\vv\in\R^d\setminus\{0\}$, it is globally wellposed in $\F^{-1}\X(\vv)^d$ on $t\ge 0$. 

For more general $\e_\perp\in\C$, let $R\in C^+_0$ be super-additive and $\cR:=\{\x\in\R^d; |\x|\le R(\vv\cdot \x)\}$. 
Then for any $d\ge 2$ and $\e_1,\e_\perp\in\C$, Theorem \ref{thm:gex supp cond} implies that the Cauchy problem for \eqref{NS mod} is globally wellposed in $\F^{-1}\X_\cR(\vv)^d$ for $t\in\R$. 
The simplest choice of $\cR$ is $\{|\x|\le\la\vv\cdot\x\}$ for some constant $\la>0$, but it may be much bigger as $\vv\cdot\x\to\I$. 

Let $\fp(\x):=C[(\vv\cdot\x)+(\vv\cdot\x)^2]$ for some constant $C>0$. 
If $C$ is large enough, depending only on $\e_1,\e_\perp,\vv$, then \eqref{LMN-p bd} is satisfied for some $0<c_0\le 1\le C_0<\I$. 
Then Theorem  \ref{thm:global L1} implies that for any $s<0$, $a\ge 2$, $Z\in\cZ^{1,s}_\fp(\cR)$, and any $e^{s\fp^a}\hat u_0 \in Z$ with $\supp\hat u_0 \subset \cR$, there exists some $b\ge 1$ such that \eqref{NS mod} 
has a unique global solution $u$ satisfying $e^{b(s\fp^a-t\fp)}\hat u(t) \in [Z(L^\I_t(0,\I))\cap C([0,\I);Z)]^d$. 
See \eqref{L1-Z} and \eqref{anis-Z} for examples of $Z$. 

For the 3D Navier-Stokes ($d=3$, $\e_1=\e_\perp>0$), Li and Sinai \cite{LiSi} constructed complex-valued blow-up solutions with Fourier support on a ball far from the origin and inside a cone. 
More precisely, they constructed a set of initial data with 10 parameters for which the energy and the enstrophy (namely the $L^2$ and $\dot H^1$ norms) of the solution blow up in finite time. 
Since their initial data satisfy our conditions, it is another and highly non-trivial example of the global wellposedness beyond the classical blow-up. 
Although the inductive construction of the solution works in the same way as for the simple example in Section \ref{ss:beyond bup}, the main difficulty in the Navier-Stokes equation comes from the projection $\mathbb{P}$, which eliminates the major part of the quadratic interaction under the  Fourier support restriction. 
Similar blow-up sets of initial data have been constructed also for the 2D Burgers equation and other hydrodynamical models \cite{LiSi2,LiSi3}. 

\subsection{Compressible Navier-Stokes and Euler equations}
Next we consider the compressible Navier-Stokes equations for the ideal gas (cf.~Lions \cite{Lion1996})
$$
\left\{
\begin{array}{l}
\partial_t \varrho + \Div(\varrho u) =0, \\
\partial_t (\varrho u)  +\Div(\varrho u \otimes u) + \nabla p = \cL_{\al,\be}u:= (\alpha+\beta) \nabla \Div u + \be\De u, \\
\partial_t (\varrho e)  + \Div( \varrho u  e)  + p \Div u  -\kappa \Delta T 
 = b_{\al,\be}(u):=\tf{\be}{2}\trace(\nabla u + (\nabla u)^{\top})^2 + \al(\Div u)^2,
\end{array}
\right.
$$
where $u:\R^{1+d}\to\R^d$ is the velocity, 
$\varrho:\R^{1+d}\to[0,\I)$ is the density, 
$T:\R^{1+d}\to[0,\I)$ is the temperature, $p=R\varrho T$ is the pressure, $e=c_vT$ is the internal energy, for some constants $\al,\be,\ka,R,c_v>0$. 
If $\al=\beta=\kappa =0$, then they are the compressible Euler equations. 
If $\varrho\not=0$, then the equations of momentum and energy may be rewritten as 
\EQ{
 \pt \p_t u + u\cdot\na u + R(\na T+T\tf{\na\varrho}{\varrho}) = \tf{1}{\varrho}\cL_{\al,\be}u,
 \pr \p_t T + u\cdot\na T + \tf{R}{c_v}T\Div u - \tf{\ka}{c_v\varrho}\De T = \tf{1}{c_v\varrho}b_{\al,\be}(u).}
Let $\varrho=\ro_*+\ro$ for some constant $\ro_*>0$. Then the system is rewritten for $(\ro,u,T)$ as 
\EQ{
 \CAS{\p_t\ro + \ro_*\Div u = -\Div(\ro u), \\
 \p_t u - \tf{1}{\ro_*} \cL_{\al,\be}u + R\na T = -u\cdot\na u - \tf{R}{\ro_*+\ro}T\na\ro - \tf{\ro}{\ro_*(\ro_*+\ro)}\cL_{\al,\be}u, \\
 \p_t T - \tf{\ka}{c_v\ro_*}\De T = -u\cdot\na T-\tf{R}{c_v}T\Div u+\tf{1}{c_v(\ro_*+\ro)}b_{\al,\be}(u)-\tf{\ka \ro}{c_v \ro_*(\ro_*+\ro)}\De T.} }
Because of the quasi-linear terms ($\cL_{\al,\be}u$ and $\De T$ on the right side), 
the condition \hyptag{C-s} is not satisfied as in the incompressible case, but 
we may still apply Theorems \ref{thm:gex supp cond} and \ref{thm:global L1} in the same way for global wellposedness in $\F^{-1}\X_\cR(\vv)^d$. 
Note that the singularity at $\ro=-\ro_*$ does not matter in our setting, without any size restriction (cf.~Sections \ref{ss:beyond bup}--\ref{ss:beyond nb}). 
We may also take more general constants $\al,\be,\ka,R,1/c_v\in\C$. 

The same argument works also for the equations of $(\ro,u,\ta)$ with $T=T_*+\ta$ for some constant $T_*>0$. 
We may similarly consider the equations for the isentropic gas, where $p=a\varrho^\ga$ for some constants $a,\ga>0$. In this case, the equations become for $(\ro,u)$
\EQ{
 \CAS{ \p_t\ro + \ro_*\Div u = -\Div(\ro u), \\
 \p_t u - \tf{1}{\ro_*}\cL_{\al,\be} u + \tf{a \ga}{\ro_*^{2-\ga}}\na\ro = -u\cdot\na u + [\tf{a\ga}{\ro_*^{2-\ga}}-\tf{a\ga}{(\ro_*+\ro)^{2-\ga}}]\na\ro- \tf{\ro}{\ro_*(\ro_*+\ro)}\cL_{\al,\be}u,} }
to which Theorems \ref{thm:gex supp cond} and \ref{thm:global L1} are applicable, with any constants $\al,\be,\ga,a\in\C$. 

\subsection{Nonlinear wave and Klein-Gordon equations}
Consider the nonlinear Klein-Gordon equation in the form
\begin{align}\label{Eq NLKG}
\left\{
\begin{array}{l}
u_{tt} + c^2u - \Delta  u   = \sH(\p_x^{\al^1}u \etc \p_x^{\al^m}u, \p_x^{\be^1}u_t\etc \p_x^{\be^n}u_t),   \\
u(0,x)= u_0(x), \ \ u_t(0,x)= u_1(x),
\end{array}
\right.
\end{align}
for $u(t,x):\R^{1+d}\to\C$, with a constant $c\ge 0$, $m,n\in\N$, $\al^1\etc \al^m,\be^1\etc \be^n\in\N_0^d$ and $\sH:\C^{m+n}\to\C$ defined and holomorphic in a neighborhood of $0$, satisfying $\sH(0)=0$. 
It is the nonlinear wave equation if $c=0$. 
The equation may be expanded into a first order system for $U=(U_0\etc U_{d+1}):=(u,\na u,u_t)$
\EQ{
 \CAS{ \p_t U_0 - U_{d+1} = 0, \\ 
 \p_t U_k - \p_k U_{d+1} = 0 \pq (1\le k\le d), \\
 \p_t U_{d+1} + c^2 U_0 - \sum_{1\le k\le d}\p_k U_k = \ti\sH(U), \\ 
 U(0)=(u_0,\na u_0,u_1),} }
where $\ti\sH(U)$ is an analytic function of derivatives of $U$ derived from $\sH(\p^{\al^1}_xu\etc \p^{\be^n}_xu_t)$, by replacing $u_t$ with $U_{d+1}$ and $\p^\al_xu$ with $\p^{\al-e_k}_xU_k$ (if $\al\ge e_k$) or with $U_0$ (if $\al=0$). 
The linear part of $\ti\sH$ should be included in the operator $L$ in applying our results. 

For any solution $U$ of the above system, the equations imply $U_{d+1}=\p_t U_0$ and $U_k=\na U_0+\fy$ for some $\fy(x)$ independent of $t$. Hence the initial constraint $U_k(0)=\na U_0(0)$ implies $U_k=\na U_0$ for all time. Then by definition of $\ti\sH$, $U_0$ satisfies the original equation \eqref{Eq NLKG}. The converse relation is obvious from the definition of the system. 

If the equation is semilinear in $\x^\perp$, namely $d=1$ or
\EQ{ 
 \max_{1\le j\le m} \sum_{2\le k\le d} \al^j_k \le 1, \pq \max_{1\le j\le n}\sum_{2\le k\le d}  \be^j_k = 0,}
then $\ti\sH$ has no derivative in $\x^\perp$, while $e^{t\hat L}$ is bounded on $\x\in\R^d$, including derivatives, locally in $t\in\R$.  
Hence the condition \hyptag{C-s} is satisfied with $\vv=e_1$ and $\te=0$, 
and Corollary \ref{GWP-XR} implies the global wellposedness of the system for $U\in\F^{-1}\X^{d+2}$ on $t\in\R$. 
If $|\al^j|\le 1$ and $\be^j=0$ for all $j$, then it is independent of the direction of the half space: the global wellposedness holds in $\F^{-1}\X(\vv)^{d+1}$ for any $\vv\in\R^d\setminus\{0\}$. 

In the general case of arbitrary derivatives $\{\al^j,\be^j\}$, we may apply Theorems \ref{thm:gex supp cond} and \ref{thm:global L1} to the above system, which imply global wellposedness in $\F^{-1}\X_\cR(\vv)^{d+2}$ and unique global existence in weighted $Z$ with frequencies in the conical region $\cR$ as before.

\subsection{Nonlinear Schr\"odinger equations}
Similarly to the Klein-Gordon equations, we may consider the nonlinear Schr\"odinger equations in the form
\EQ{
 iu_t + \De u = \sH(\p_x^{\al^1}u \etc \p_x^{\al^m}u),}
for $u(t,x):\R^{1+d}\to\C$ with the same type of nonlinearity as above.
In this case, the global wellposedness by Corollary \ref{GWP-XR} works if there is no derivative in the orthogonal directions, namely $\al_k^j=0$ for all $j$ and $k\ge 2$. 
Otherwise, we may still apply Theorems \ref{thm:gex supp cond} and \ref{thm:global L1} for the global wellposedness on the conical region.  

Note that the standard nonlinear Schr\"odinger equations are not in the above form, because of the complex conjugate. However, we may consider the equations in the form 
\EQ{
 iu_t + \De u = \sH(\p_x^{\al^1}u \etc \p_x^{\al^m}u, \p_x^{\be^1}u^\star \etc \p_x^{\be^n} u^\star),}
where $u^\star$ denotes the complex conjugate in the Fourier space, namely 
\EQ{
 u^\star(t,x) := \ba{u(t,-x)}, \pq \F u^\star(t,\x) = \ba{\F u(t,\x)}.}
See \cite{AbMu1,AbMu2} for the one-dimensional integrable models, with cubic terms containing one $u^\star$ and at most one derivative. 
Unique global solutions were constructed in \cite{ChLuWa} under the Fourier support restriction in  $(0,\I)$. 

The above equation may be expanded to the system of $U=(u,v):=(u,u^\star)$ with 
\EQ{
 \CAS{iu_t + \De u = \sH(\p_x^{\al^1}u \etc \p_x^{\al^m}u, \p_x^{\be^1}v \etc \p_x^{\be^n} v),\\
 iv_t - \De v = -\bar{\sH}((-\p_x)^{\al^1}v \etc (-\p_x)^{\al^m}v, (-\p_x)^{\be_1}u \etc (-\p_x)^{\be^n} u),}}
where $\bar{\sH}$ is the power series derived from $\sH$ by replacing all the coefficients with their complex conjugate. $v=u^\star$ is an invariant subspace of this system. 

We may apply Theorems \ref{thm:gex supp cond} and \ref{thm:global L1} to get global solutions to this system. 
If it has no derivative in the orthogonal directions, then Corollary \ref{GWP-XR} is also applicable for the global wellposedness on the half space. 
The invariance of $v=u^\star$ is immediate from the uniqueness in those results. 
In particular, the simplest model from \cite{AbMu1}: 
\EQ{ \label{NCNLS}
 i\dot u +  u_{xx} = \al u^2 u^\star,}
for $u(t,x):\R^2\to\C$ and a constant $\al\in\C$, 
is globally wellposed for $t\in\R$ in $\F^{-1}\X$, 
as well as in the smaller spaces $\F^{-1}\sD'_{\ge\e}$, for all $\e>0$. 

If $\al\not=0$, this equation has a family of stationary solutions 
\EQ{
 u(x)= \tf{c}{x+iz}=\F^{-1}[-ic1_{\x>0}e^{-z\x}], \pq c:=(2/\al)^{1/2}, \pq z\in(0,\I),}
contained in $\F^{-1}\X \cap H^\I(\R)$, as well as in \eqref{L1-Z} and \eqref{anis-Z} with $\vv=1$. 
The solutions $\tf{c}{x+iz}$ with $z<0$ are in those spaces with $\vv=-1$, 
while the spaces with $\vv=1$ contain more exotic solutions: $\tf{c}{x+i0}=c(p.v.\tf{1}{x}-i\pi\de)=\F^{-1}[-ic1_{\x>0}]\in\sS'(\R)$ and $\F^{-1}[-ic1_{\x>0}e^{z\x}]\not\in\sS'(\R)$ with $z>0$.

One may wonder if the same argument would work for the standard equations with $\bar{u}$ in the nonlinearity. In fact the global results apply to the extended system of $(u,v):=(u,\bar{u})$, 
but the intersection of the invariant subspace $v=\bar{u}$ and $\F^{-1}\X$ is only the trivial $0$ solution. 

\subsection{Completely integrable equations} 
For nonlinear dispersive equations, it is known that complete integrability may drastically lower the regularity needed for the Cauchy problem to be wellposed.  
In particular, for the KdV equation 
\EQ{
 u_t + u_{xxx} = 6uu_x}
for $u(t,x):\R^{1+1}\to\R$, Kappeler and Topalov \cite{KaTo} proved the global wellposedness in $H^{-1}(\R/\Z)$, and Killip and Vi\c{s}an \cite{KiVi} in $H^{-1}(\R)$. 
On the other hand, Molinet \cite{Mo1,Mo2} proved the illposedness in $H^s$ for $s<-1$ on both the domains. 

For mixed initial data, Tsugawa \cite{Ts} proved the local wellposedness in a Banach space of quasi-periodic functions, including initial data in the form 
\EQ{
 u_0 = \sum_{k=1}^N f_k, \pq f_k\in H^s(\R/\ell_k\Z), \pq s>-\tf{1}{2N},}
for any fixed $N\in\N$ and $\{\ell_k\}_{1\le k\le N}\subset(0,\I)$. For almost periodic data (which forms a closed subspace in $L^\I(\R)$), the unique global solution was constructed by Binder, Damanik, Goldstein and Lukic \cite{BiDaGoLu} under some spectral conditions. Laurens \cite{La} proved the global wellposedness in $V(t)+H^{-1}(\R)$, around any fixed solution $V\in C(\R;H^5(\R/\Z))$. 

For the complex-valued solutions, Birnir \cite{Bir} proved blow-up in $L^\I_x$ of the solutions given by Airault, McKean and Moser \cite{AMM}, for the initial data 
$u(0,x) = 6\wp(x+is)$, 
where $\wp$ is the Weierstrass elliptic function with the periods $1,i\om$ for some $\om>0$, and $s\in(-\om,\om)$. The Fourier support is not within $[0,\I)$. 
The solution is in the form 
\EQ{
 u(t,x)=2[\wp(x+2\re z(t)+is)+\wp(x-z(t)+is)+\wp(x-\bar z(t)+is)]} 
for some smooth $z:\R\to\C$ satisfying $z(0)=0$, 
and the blow-up occurs when $\im z(t)\in \pm s+\om\Z$ due to the poles of $\wp$. 
Similarly to Remark \ref{rem:Cole-Hopf}, such singularities are not continuous in $\F^{-1}\sD'(\R)$. 

On the other hand, Corollary \ref{GWP-XR} yields the global wellposedness for $t\in\R$ in $\F^{-1}\X$, where more wild initial data are allowed, under the Fourier support constraint. Indeed, $\F^{-1}\X$ includes all initial data in the form 
\EQ{ \label{u0 Hs many periods}
 u_0= \sum_{n\in\Z} f_n, \pq \CAS{f_0 \in H^{s_0}(\R),\ \supp \F f_0\subset [0,\I), &(n=0),\\ f_n \in H^{s_n}(\R/\ell_n\Z),\ \supp \F f_n\subset(0,\I) &(n\not=0),} }
for any sequences $\{s_n\}_{n\in\Z}\subset\R$ and $\{\ell_n\}_{n\in\Z}\subset(0,\I)$, satisfying 
\EQ{
  \lim_{n\to\I}\ell_n=0, \pq \inf_{\de>0}\sum_{n<0}\|\F f_n\|_{\M(\R^d_{<\de})}<\I.}
Theorem \ref{thm:global L1} also yields unique global solutions. 

Similarly to the previous equation \eqref{NCNLS}, the KdV has stationary solutions 
\EQ{
  u(x) = \tf{1}{(x+iz)^2}=-\F^{-1}[1_{\x>0}\x e^{-z\x}] \pq(z\in\C,\ \re z>0) }
contained in $\F^{-1}\X\cap H^\I(\R)$ and other spaces. 
The solutions $\tf{1}{(x+iz)^2}$ with $\re z<0$ are in the spaces with $\vv=-1$, while the spaces with $\vv=1$ contain also $\tf{1}{(x-a+i0)^2}\in\sS'(\R)$ for $a\in\R$ and $\F^{-1}[1_{\x>0}\x e^{-z\x}]\not\in\sS'(\R)$ for $\re z<0$. 

The same story holds for some other integrable equations, such as the modified KdV equation and the Benjamin-Ono equation. Since our wellposedness results require little for the equations, they 
are convenient for equations derived from algebraic viewpoints. 
For example, consider the deformed AKNS system on $\R^2$
\begin{align}\label{Eq AKNS}
\left\{
\begin{array}{l}
u_{t}  -   u_{xx}   =  2u^2 v + u^2 v(v u_{yy}+ 2u_y v_y) + 2u(vu_{xy} + v_x u_y),  \\
v_{t}  +   v_{xx}   = - 2v^2 u - v^2 u(u v_{yy}+ 2u_y v_y) - 2v (u v_{xy} + u_x v_y),  \\
u(0,x)= u_0(x), \ \ v(0,x)= v_0(x),
\end{array}
\right.
\end{align}
for $(u,v)(t,x,y):\R^{1+2}\to\C^2$, where the second equation is a backward parabolic equation. \eqref{Eq AKNS} is Lax integrable and symmetry integrable, cf. \cite{LoHaJi2022}. 
Theorems \ref{thm:gex supp cond} and \ref{thm:global L1} 
apply to the above system as well. 
Similarly, we may apply those results to the integrable $(3+1)$-dimensional KdV equation in \cite{HaLo2023}:
\EQ{
&u_t - u_{xxx} =  3b u_z (u_x + a_1 u u_y +a_2 u^2 u_z)^2  
\prQ + \p_x (  -2u^3 + \tf{3}{2}a^2 u^2 u_{yy} + 3a u u_{xy} + 3b u^2 u_{xz} + \tf{3}{2}b^2 u^4 u_{zz} + 3ab u^3 u_{zy} ) 
\prQ + a \p_y ( a^2 u^3 u_{yy} -u^4 + \tf{3}{2}a u^2 u_{xy} + \tf{9}{4}ab u^4 u_{zy} +  \tf{6}{5}b^2 u^5 u_{zz} + 2b u^3 u_{xz} ) 
\prQ + b \p_z ( b^2 u^6 u_{zz} - \tf{3}{5}a u^5  + \tf{3}{2}b u^4 u_{xz} +  \tf{3}{4}a^2 u^4 u_{yy} +  \tf{9}{5}ab u^5 u_{zy} + a u^3 u_{xy} ),}
where $u(t,x,y,z):\R^{1+3}\to\C$ and $a,b,a_1,a_2\in\C$ are some constants.

\subsection{Nonlinear hyperbolic conservation laws}
The general systems of nonlinear hyperbolic conservation laws on $\R^d$ 
\begin{align} \label{C laws}
\partial_t u_j + \sum^d_{k=1} \partial_k \sH_{jk}(u) =0\pq(j=1\etc n), \ \ 
\end{align}
for $u:\R^{1+d}\to\C^n$, may be also treated by our results, if each $\sH_{jk}:\C^n\to\C$ is defined and analytic in some neighborhood of $0\in\C^n$. 
Decomposing $\sH$ into the linear and super-linear parts, one may easily see that Theorems \ref{thm:gex supp cond} and \ref{thm:global L1} apply to the above system. 
In fact, it does not matter whether the nonlinearity is in the divergence form. 

\subsection{Nonlinear parabolic equations}
Constantin--Lax--Majda \cite{CoLaMa1985,Sch1986} proposed the following one-dimensional model of the vorticity equation: 
\begin{align}
v_t = \varepsilon v_{xx} + v \cH v, \label{CLMeq}
\end{align}
where $v:\R^{1+1}\to\R$ and $\cH=\F^{-1}(i\sign \x)\F$ is the Hilbert transform.
In the same way as for the KdV, Corollary \ref{GWP-XR} yields the global wellposedness in $\F^{-1}\X$, and Theorem \ref{thm:global L1} yields unique global solutions, both for $t\in\R$ and $\e\in\C$. 
It has also the stationary solutions in the same form, $v(x)=-c\F^{-1}[1_{\x>0}\x e^{-z\x}]$ with $c:=6i\e$.

More generally, we may consider the fractional heat equation
\begin{align}
u_t =   \e(-\Delta)^{s/2} u   +  \sH(\p_x^{\al^1}u\etc \p_x^{\al^m}u), \label{F heat}
\end{align}
for $u:\R^{1+d}\to\C$, where $\e,s\in\C$, $m\in\N$, $\al^1\etc \al^m\in\N_0^d$ are constants and $\sH:\C^m\to\C$ is holomorphic in a $0$-neighborhood, satisfying $\sH(0)=0$. 
Here the boundedness of $L=(-\De)^{s/2}$ as $\x_1\to 0$ requires $\re s\ge 0$. 
If $\re\e>0$ and 
\EQ{
 \bar\al:=\max_{1\le j\le m}\sum_{2 \le k \le d} \al_k^j <\re s,}
then the condition \hyptag{C-s} with $\te=\bar\al/\re s$ and $\vv=e_1$ is satisfied by the equation of $\LR{\na}^{\bar\al}u$, so Corollary \ref{GWP-XR} yields the global wellposedness for $u\in\LR{\na}^{-\bar\al}\F^{-1}\X$ on $t\ge 0$. 
If $\ti\al:=\max_{1\le j\le m}|\al^j|<\re s$, then it is independent of the direction: the global wellposedness holds for $u\in\LR{\na}^{-\ti\al}\F^{-1}\X(\vv)$ on $t\ge 0$ for any $\vv\in\R^d\setminus\{0\}$. 
Theorem \ref{thm:gex supp cond} is applicable without any restriction on $\{\al^j\}$ and $\e\in\C$ for $\re s\ge 0$. 
If $\re s\ge 1$, then the decay condition of $\hat L$ in \eqref{LMN-p bd} is satisfied, so that Theorem \ref{thm:global L1} can also be applied. 

We may also consider nonlinear diffusion, such as the porous medium equation:
\EQ{
 \pt u_t=\De(u^m),}
or the fast diffusion equation, and the $p$-Laplace heat equation:
\EQ{
 u_t= \Div(|\na u|^{p-2}\na u), \pq |\na u|^{p-2}:=[\sum_{j=1}^d(\p_ju)^2]^{p/2-1}.}
Here $u(t,x):\R^{1+d}\to\R$ and $m,p>0$ are given constants. 

In fact, if $m\ge 2$ is an even integer, or $p\ge 4$ is an even integer, then the right side is already in the nonlinear form of our setting, after extending $u:\R^{1+d}\to\C$, 
so that we may apply Theorems \ref{thm:gex supp cond} and \ref{thm:global L1}. 

In the remaining case of $m$ or $p$, we may expand the power around a fixed constant. 
For the porous medium, let $u=u_*+v$ for some constant $u_*\in\C\setminus\{0\}$. 
Then the equation is rewritten for $v$ as 
\EQ{
 v_t - mu_*^{m-1} \De v = \De \sH_m^{u_*}(v), \pq \sH_m^{u_*}(v):=\sum_{n\ge 2} \tf{\Ga(m+1)u_*^{m-n}}{\Ga(m-n+1)n!}v^n,}
which is in the form to which Theorems \ref{thm:gex supp cond} and \ref{thm:global L1} apply. 
For the $p$-Laplacian, let $u=u_*+\x_*\cdot x+v$ for some constants $u_*\in\C$ and $\x_*\in\C^d\setminus\{0\}$. Then the equation is rewritten for $v$
\EQ{
 \pt v_t - |\x_*|^{p-2}\De v - (p-2)|\x_*|^{p-4}\Div(\x_*\cdot\na v) = \Div \sH_{p,\x_*}(\na v),
 \prq \sH_{p,\x_*}(V):=\sH_{p/2-1}^{|\x_*|^2}(2\x_*\cdot V+|V|^2)(\x_*+V)+(p-2)|\x_*|^{p-4}(\x_*\cdot V)V,}
to which we may apply Theorems \ref{thm:gex supp cond} and \ref{thm:global L1}. 
Of course, we may consider various combinations of those nonlinear terms.

\end{document}